\newcommand{\nospacepunct}[1]{\makebox[0pt][l]{\,#1}} %use this command to punctuate cd's
\theoremstyle{plain}
\newtheorem{thm}{Theorem}[section]
\newtheorem*{thm*}{Theorem}
\newtheorem{cor}[thm]{Corollary}
\newtheorem*{cor*}{Corollary}
\newtheorem{prop}[thm]{Proposition}
\newtheorem*{prop*}{Proposition}
\newtheorem{lem}[thm]{Lemma}
\newtheorem*{lem*}{Lemma}
\newtheorem*{claim*}{Claim}
\newtheorem*{exer*}{Exercise}
\newtheorem{conj}[thm]{Conjecture}
\newtheorem*{conj*}{Conjecture}
\theoremstyle{definition}
\newtheorem{defn}[thm]{Definition}
\newtheorem*{defn*}{Definition}
\newtheorem*{ex*}{Example}
\newtheorem*{q*}{Question}
\theoremstyle{remark}
\newtheorem{rem}[thm]{Remark}
\newtheorem*{rem*}{Remark}
\theoremstyle{plain}
\Crefname{thm}{Theorem}{Theorems}
\Crefname{defn}{Definition}{Definitions}
\Crefname{lem}{Lemma}{Lemmata}
\newcommand{\VN}{\mathcal{N}}
\newcommand{\UO}{\mathcal{U}}
\newcommand{\C}{\mathbb{C}}
\newcommand{\Q}{\mathbb{Q}}
\newcommand{\Z}{\mathbb{Z}}
\newcommand{\inv}{^{-1}}
\DeclareMathOperator{\coker}{coker}
\DeclareMathOperator{\im}{Im}
\DeclareMathOperator{\lcm}{lcm}
\DeclareMathOperator{\Mat}{Mat}
\DeclareMathOperator{\Ore}{Ore}
\DeclareMathOperator{\pr}{pr}
\DeclareMathOperator{\rk}{rk}
\DeclareMathOperator{\SMRF}{SMRF}
\DeclareMathOperator{\Tor}{Tor}
\DeclareMathOperator{\trace}{tr}
\newcommand{\vertii}[1]{{\left\vert\kern-0.25ex\left\vert #1 \right\vert\kern-0.25ex\right\vert}}
\newcommand{\vertiii}[1]{{\left\vert\kern-0.25ex\left\vert\kern-0.25ex\left\vert #1 \right\vert\kern-0.25ex\right\vert\kern-0.25ex\right\vert}}
\newsavebox{\@brx}
\newcommand{\llangle}[1][]{\savebox{\@brx}{\(\m@th{#1\langle}\)}%
  \mathopen{\copy\@brx\mkern2mu\kern-0.9\wd\@brx\usebox{\@brx}}}
\newcommand{\rrangle}[1][]{\savebox{\@brx}{\(\m@th{#1\rangle}\)}%
  \mathclose{\copy\@brx\mkern2mu\kern-0.9\wd\@brx\usebox{\@brx}}}
\title[Universal localizations, Atiyah conjectures and graphs of groups]{Universal localizations, Atiyah conjectures and graphs of groups}
\author{Pablo S\'anchez-Peralta}
\address[P.~ S\'anchez-Peralta]{Universidad Aut\'onoma de Madrid, Madrid, Spain}
\email{pablo.sanchezperalta@uam.es}
\begin{document}

\begin{abstract}
    Let $G$ be a countable group that is the fundamental group of a graph of groups with finite edge groups and vertex groups satisfying the strong Atiyah conjecture over $K \subseteq \C$ a field closed under complex conjugation. Assume that the orders of finite subgroups of $G$ are bounded above. We show that $G$ satisfies the strong Atiyah conjecture over $K$. In particular, this implies that the strong Atiyah conjecture is closed under free products. Moreover, we prove that the $\ast$-regular closure of $K[G]$ in $\UO(G)$, $\mathcal{R}_{\scalebox{0.8}{$K[G]$}}$, is a universal localization of the graph of rings associated to the graph of groups, where  the rings are the corresponding $\ast$-regular closures. As a result, we obtain that the algebraic and center-valued Atiyah conjecture over $K$ are also closed under the graph of groups construction as long as the edge groups are finite.  We also infer some consequences on the structure of the $K_0$ and $K_1$-groups of $\mathcal{R}_{\scalebox{0.8}{$K[G]$}}$. The techniques developed enable us to prove that $K[G]$ fulfills the strong, algebraic and center-valued Atiyah conjectures, and that $\mathcal{R}_{\scalebox{0.8}{$K[G]$}}$ is the universal localization of $K[G]$ over the set of all matrices that become invertible in $\UO(G)$, provided that $G$ belongs to a certain class of groups $\mathcal{T}_{\scalebox{0.7}{$\mathcal{VLI}$}}$, which contains in particular virtually-\{locally indicable\} groups that are the fundamental group of a graph of virtually free groups.
\end{abstract}

\maketitle

\section{Introduction} \label{sec: introd}

    Let $G$ be a countable group and assume that the orders of finite subgroups of $G$ are bounded above. We denote by $\lcm(G)$ the least common multiple of the orders of finite subgroups of $G$. The {\it strong Atiyah conjecture for $G$ over a subfiled $K \subseteq \C$} predicts that the $L^2$-dimension function \cite[Definition 8.28]{Luck02} over finitely presented left $K[G]$-modules takes values in $\frac{1}{\lcm(G)} \Z$. This is an algebraic reformulation of the classical statement in which a group $G$ acting freely and cocompactly on a CW complex $X$ satisfies the strong Atiyah conjecture over $\Q$ if the $L^2$-Betti numbers $\beta_i^{(2)}(X,G)$ belong to $\frac{1}{\lcm(G)} \Z$. It is worth mentioning that without the assumption of bounded torsion, the strong Atiyah conjecture is false in general by work of Grigorchuk--Linnell--Schick--\.Zuk \cite[Theorem 2]{GLSZnotbddAtiyah}.

    The conjecture is now known for several classes of groups, for instance Linnell's class $\mathcal{C}$ \cite[Theorem 1.5]{LinnellDivRings93} that contains all free-by-{elementary amenable} groups as well as fundamental groups of three-manifolds \cite[Theorem 1.4]{KielakLinton_3mfldAtiyah}, Schick's class $\mathcal{D}$ \cite[Theorem 1]{SchickIntegrality} that includes all residually-\{torsion-free elementary amenable\} groups, see \cite[Corollary 1.2]{JaikinBaseChange}, or locally indicable groups by Jaikin-Zapirain and L\'opez-\'Alvarez \cite[Theorem 1.1]{JaikinLopez_Atiyah} which encompasses torsion-free one-relator groups. For a detailed overview of the current status of the strong Atiyah conjecture and its inheritance properties see \cite[Section 2.4.3]{FabianHennekeThesis}. Although the three classes of groups that we have just mentioned are closed under free products, almost nothing has been known about the general question of whether the strong Atiyah conjecture passes to free products of groups or not. In this article, we settle this question and examine how the strong Atiyah conjecture behaves with respect to the graph of groups construction.

    \begin{thm}\label{thm: SAC_GG}
        Let $\mathscr G_\Gamma = (G_v, G_e)$ be a graph of groups with countable fundamental group $G$. Assume that every vertex group $G_v$ satisfies the strong Atiyah conjecture over $K \subseteq \C$ a field closed under complex conjugation and that every edge group $G_e$ is finite. Assume in addition that $\lcm(G)$ is finite. Then $G$ satisfies the strong Atiyah conjecture over $K$.
    \end{thm}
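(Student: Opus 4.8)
The plan is to reduce the graph-of-groups situation to two basic building blocks—amalgamated free products $A *_C B$ and HNN extensions $A*_C$ with $C$ finite—and then handle these via a combination of Bass–Serre theory and the known permanence properties of the strong Atiyah conjecture. Since any fundamental group of a graph of groups with finite edge groups acts on the Bass–Serre tree with finite vertex stabilizers conjugate into the $G_v$, and since $\lcm(G)$ is assumed finite, the key structural input is that $G$ is \emph{virtually} built out of the vertex groups in a controlled way. More precisely, I would first invoke a theorem of Karrass–Pietrowski–Solitar / Scott–Wall type: a group acting on a tree with finite edge stabilizers is, up to finite index, a free product of (conjugates of) vertex-stabilizer pieces with a free group; combined with boundedness of finite subgroups, one extracts a finite-index subgroup $H \leq G$ which is a free product $H = H_1 * \cdots * H_k * F$ where $F$ is free and each $H_i$ embeds in some vertex group $G_v$.

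Next I would lift the strong Atiyah conjecture from the $G_v$ to these $H_i$ and to $F$: free groups satisfy it (Linnell), and the strong Atiyah conjecture over $K$ passes to subgroups when $\lcm$ is preserved, so each $H_i$ satisfies it over $K$. Then one needs that the strong Atiyah conjecture is closed under free products—but that is precisely (a special case of) what the theorem asserts, so to avoid circularity the argument must instead prove the free-product case directly. The natural route is via the $*$-regular closure $\mathcal{R}_{K[G]}$ inside $\mathcal{U}(G)$: one shows that $\mathcal{R}_{K[H_1 * \cdots * H_k * F]}$ is a universal localization of the "coproduct" ring $\mathcal{R}_{K[H_1]} * \cdots * \mathcal{R}_{K[H_k]} * K[F]$ amalgamated suitably, using the fact (due to the group von Neumann algebra of a free product decomposing as a reduced free product, and the behaviour of $\ell^2$-Betti numbers under free products à la Lück's formula) that the $L^2$-dimension of an induced module is computable from the pieces. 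Concretely, Linnell's original strategy—localization at the free product, together with the "Mayer–Vietoris" sequence for the tree action—shows that every finitely presented $K[G]$-module has $L^2$-dimension in $\frac{1}{\lcm(G)}\Z$ provided each vertex contribution does; here one uses that the division-closure/$*$-regular closure of $K[G]$ is obtained from those of the vertex pieces by a universal localization, which forces the relevant $K_0$ to be generated by the idempotents coming from finite subgroups.

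Assembling these: from the finite-index subgroup $H$ satisfying the strong Atiyah conjecture over $K$ with $\lcm(H) \mid \lcm(G)$, I would use induction/restriction of Hilbert $K[G]$-modules. A finitely generated projective $\ell^2(G)$-module restricted to $H$ has $\dim_{\mathcal{N}(H)}$ equal to $[G:H]$ times $\dim_{\mathcal{N}(G)}$; since the former lies in $\frac{1}{\lcm(H)}\Z$ and $[G:H]$ is a unit-cleared integer, a short number-theoretic argument (using that $\lcm(G)$ already incorporates all finite-subgroup orders, so no extra denominators appear) yields $\dim_{\mathcal{N}(G)} \in \frac{1}{\lcm(G)}\Z$. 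One must be slightly careful that passing from $H$ to $G$ does not introduce denominators beyond $\lcm(G)$: this is where the hypothesis that $\lcm(G)$ is finite does the work, since it bounds exactly the torsion that the induction can produce.

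The main obstacle I anticipate is the free-product (equivalently, the amalgam/HNN with finite edge group) base case done \emph{without} assuming the conclusion: establishing that $\mathcal{R}_{K[G]}$ is the universal localization of the graph of $*$-regular rings requires knowing that this localization is itself von Neumann regular and that its $K_0$ is controlled, which in turn relies on a careful analysis of the ring coproduct and its modules (Schofield/Bergman style computations, or Jaikin-Zapirain's Hughes-freeness machinery adapted to finite edge groups). Getting the rationality of the dimension function to survive the coproduct—i.e.\ that no "new" $L^2$-dimensions appear beyond $\frac{1}{\lcm}\Z$—is the crux; the rest (Bass–Serre decomposition, restriction-of-dimension bookkeeping, the number theory of $\lcm(G)$) is comparatively routine.
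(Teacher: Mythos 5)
Your plan contains two genuine gaps, both in the reduction steps, and it also diverges substantially from what the paper actually does.

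First, the Karrass--Pietrowski--Solitar / Scott--Wall reduction does not apply here. That circle of results concerns actions on trees with \emph{finite vertex and edge} stabilizers (characterizing virtually free groups). Here the vertex groups $G_v$ are arbitrary, and the finite-index subgroup $H \leqslant G$ you want---one intersecting every conjugate of every edge group trivially, so that $H$ acts on the Bass--Serre tree with trivial edge stabilizers and decomposes as a free product---need not exist. For instance, take $A$ an infinite simple group with finite $\lcm(A)$, $C \leqslant A$ a nontrivial finite subgroup, and $G = A \ast_C A$. Then $\lcm(G) = \lcm(A) < \infty$, but $G$ has no nontrivial finite quotients at all (any finite quotient kills both simple factors), so no finite-index subgroup avoids $C$. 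Boundedness of torsion is necessary for the decomposition you want but far from sufficient; one needs a separability hypothesis on the edge groups that the theorem does not assume.

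Second, even granting the finite-index free-product subgroup $H$, the final step is not ``comparatively routine.'' The strong Atiyah conjecture is \emph{not} known to pass to finite extensions in general; this is one of the central open difficulties in the subject. Your proposed route via the restriction formula $\dim_{\mathcal{N}(H)}(\mathrm{res}_H V) = [G:H]\cdot \dim_{\mathcal{N}(G)}(V)$ only yields $\dim_{\mathcal{N}(G)}(V) \in \frac{1}{[G:H]\,\lcm(H)}\Z$, and $[G:H]\lcm(H)$ can strictly exceed $\lcm(G)$ (already when $G$ is torsion-free and $H$ is a proper finite-index subgroup, where the bound degenerates to $\frac{1}{[G:H]}\Z$ instead of $\Z$). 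The ``number theory of $\lcm(G)$'' does not close this gap; what is actually needed for a finite extension is control of $K_0$ of a crossed product, which is where Linnell's and Schick's arguments do real work. Indeed, the paper itself confronts exactly this obstruction in \cref{sec: class_T}, where the finite-extension step is only handled for virtually--\{locally indicable\} groups, via the universal localization property and \cref{prop: virt_localization_and_GG}; it is never used in the proof of \cref{thm: SAC_GG}.

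The paper's actual route avoids both problems. It never passes to a finite-index subgroup of $G$. Instead, \cref{sec: amalg_free_prod} directly proves the amalgamated free product case by establishing $\Tor_1^{R_G}(\UO(G),\UO(G))=0$ (\cref{prop: vanishing_Tor1_amalg}), where $R_G = \mathcal{R}_{\scalebox{0.8}{$K[A]$}} \ast_{\scalebox{0.8}{$\mathcal{R}_{\scalebox{0.8}{$K[C]$}}$}} \mathcal{R}_{\scalebox{0.8}{$K[B]$}}$, with the key computational input being a finite-rank-operator identity (\cref{lem: amalg_prod_intersection_augment_ideal}) rather than any free-product decomposition of $G$. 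The Tor vanishing, together with Bergman--Dicks structure theory for submodules of induced modules over coproduct rings (\cref{thm: induced_submod}), pins the $L^2$-dimensions into $\frac{1}{\lcm(G)}\Z$. HNN extensions are then treated in \cref{sec: HNN_ext} by presenting $G$ as a cyclic extension of an infinite tree coproduct and invoking Ore localization, and \cref{sec: GG} assembles the general case by a direct limit over finite subgraphs. You correctly identify that universal localization of the $\ast$-regular closures and the Bergman/Dicks/Schofield module calculus are the right tools, and that the base case is the crux; but the reduction via a virtually-free-product structure is the wrong scaffolding for it.
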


    Given a graph of groups $\mathscr{G}_\Gamma = (G_v, G_e)$ with fundamental group $G$, we can always consider the associated graph of rings $\mathscr{RG}_\Gamma = (\mathcal{R}_{\scalebox{0.8}{$K[G_v]$}}, \mathcal{R}_{\scalebox{0.8}{$K[G_e]$}})$ of its $\ast$-regular closures (see \cref{subsec: vNdim_reg_clos} for the definition) where the ring homomorphisms are induced from the group homomorphisms in the graph of groups. Note that there is a natural ring homomorphism $\phi: \mathscr{RG}_\Gamma \rightarrow \mathcal{R}_{\scalebox{0.8}{$K[G]$}}$ which endows $\mathscr{RG}_\Gamma$ with a Sylvester module rank function (\cref{def: SMRF}), abbreviated as $\SMRF$, via a pull-back of the $L^2$-dimension. From an algebraic point of view, the strong Atiyah conjecture determines the ring structure of $\mathcal{R}_{\scalebox{0.8}{$K[G]$}}$, namely as a semisimple ring, which is in fact a division ring if $G$ is torsion-free. Note that the latter is a stronger version of the Kaplansky zero-divisor conjecture for $K[G]$. We go further and prove that $\mathcal{R}_{\scalebox{0.8}{$K[G]$}}$ possesses a very specific structure: that of a {\it universal localization of} $\mathscr{RG}_\Gamma$, meaning that $\mathcal{R}_{\scalebox{0.8}{$K[G]$}}$ is universal among the $\mathscr{RG}_\Gamma$-rings with respect to the property that every full map over $\mathscr{RG}_\Gamma$ has an inverse in $\mathcal{R}_{\scalebox{0.8}{$K[G]$}}$ (see \cref{subsec: univ_local} for more details). As a matter of fact, the universal localization structure of $\mathcal{R}_{\scalebox{0.8}{$K[G]$}}$ enables us to understand its low degree $K$-theory, that is, the $K_0$ and $K_1$-groups.

    \begin{thm}\label{thm: GG_univ_local_&_K_0}
        Let $\mathscr G_\Gamma = (G_v, G_e)$ be a graph of groups with countable fundamental group $G$. Assume that every vertex group $G_v$ satisfies the strong Atiyah conjecture over $K \subseteq \C$ a field closed under complex conjugation and that every edge group $G_e$ is finite. Assume in addition that $\lcm(G)$ is finite. Then the following holds:
        \begin{enumerate}[label=(\roman*)]
            \item The map $\oplus_{v \in V} K_0(\mathcal{R}_{\scalebox{0.8}{$K[G_v]$}}) \rightarrow K_0(\mathscr{RG}_\Gamma)$ is surjective;
            \item the $\SMRF$ induced from $\UO(G)$ via $\phi$ is faithful over finitely generated projective left $\mathscr{RG}_\Gamma$-modules;
            \item $\mathcal{R}_{\scalebox{0.8}{$K[G]$}}$ is the universal localization of all full maps over $\mathscr{RG}_\Gamma$ with respect to the $L^2$-dimension; and
            \item the map $\oplus_{v \in V} K_0(\mathcal{R}_{\scalebox{0.8}{$K[G_v]$}}) \rightarrow K_0(\mathcal{R}_{\scalebox{0.8}{$K[G]$}})$ is surjective.
        \end{enumerate}
    \end{thm}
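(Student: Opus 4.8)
The plan is to build everything on top of \cref{thm: SAC_GG}, so that the $L^2$-dimension descends to a genuine $\SMRF$ on the graph of rings, and then to identify $\mathcal{R}_{\scalebox{0.8}{$K[G]$}}$ with a universal localization by a Bass--Serre / Mayer--Vietoris argument on the tree on which $G$ acts. I would first reduce to the two fundamental cases — an amalgamated free product $G = A \ast_C B$ and an HNN extension $G = A\ast_C$ with $C$ finite — since a general graph of groups is an iterated (and direct-limit) combination of these, and all four conclusions are preserved under such limits because $K_0$ commutes with filtered colimits of rings and universal localization is stable under composition and base change. In each of the two base cases, the key object is the ring $\mathscr{RG}_\Gamma$, which for $G = A\ast_C B$ is the pushout (amalgam) $\mathcal{R}_{\scalebox{0.8}{$K[A]$}} \ast_{\mathcal{R}_{\scalebox{0.8}{$K[C]$}}} \mathcal{R}_{\scalebox{0.8}{$K[B]$}}$ of $\ast$-regular closures along the finite-group inclusions; since $C$ is finite, $\mathcal{R}_{\scalebox{0.8}{$K[C]$}} = K[C]$ is semisimple, hence all the module-theoretic bookkeeping (projectivity, flatness of the structural maps) is available for free.

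For step (i), I would show directly that every finitely generated projective $\mathscr{RG}_\Gamma$-module is, up to stable equivalence, glued from projectives over the vertex rings along the (semisimple) edge rings, so its class in $K_0$ lies in the image of $\oplus_v K_0(\mathcal{R}_{\scalebox{0.8}{$K[G_v]$}})$; this is essentially a Bass--Serre decomposition of projective modules over an amalgam of rings, which works cleanly precisely because the edge rings are semisimple so that no $K_1$ obstruction (no $\coker$ in the relevant Mayer--Vietoris sequence) appears. Step (ii) is where \cref{thm: SAC_GG} is essential: faithfulness of the pulled-back $\SMRF$ on finitely generated projective $\mathscr{RG}_\Gamma$-modules amounts to saying that a nonzero f.g.\ projective maps to something of positive $L^2$-dimension, which one checks on the building blocks (where the map $\mathcal{R}_{\scalebox{0.8}{$K[G_v]$}} \to \mathcal{R}_{\scalebox{0.8}{$K[G]$}}$ is faithfully flat, a consequence of the Atiyah-conjecture structure theory for the $\ast$-regular closure) and then propagates through the amalgam using (i). Granting (i) and (ii), step (iii) follows from Malcolmson's / Schofield's criterion: a homomorphism $\mathscr{RG}_\Gamma \to R$ to a von Neumann regular ring inverting exactly the full maps and with faithful induced rank function is, by the universal property, the universal localization $\Sigma^{-1}\mathscr{RG}_\Gamma$; one shows $\mathcal{R}_{\scalebox{0.8}{$K[G]$}}$ both inverts all full maps (because $\mathcal{R}_{\scalebox{0.8}{$K[G]$}}$ is $\ast$-regular, hence full matrices over a sub-ring with faithful rank become invertible) and is generated, as a ring, by the image of $\mathscr{RG}_\Gamma$ together with these inverses (Bass--Serre normal-form argument for elements of $K[G] \subseteq \mathcal{R}_{\scalebox{0.8}{$K[G]$}}$). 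Finally step (iv) is immediate: $K_0$ of a universal localization is a quotient of $K_0$ of the original ring (the localization sequence), so $K_0(\mathscr{RG}_\Gamma) \to K_0(\mathcal{R}_{\scalebox{0.8}{$K[G]$}})$ is surjective, and composing with (i) gives the claim.

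The main obstacle is step (iii), specifically showing that $\mathcal{R}_{\scalebox{0.8}{$K[G]$}}$ is generated as a ring by $\mathscr{RG}_\Gamma$ together with the inverses of full maps — equivalently, that no proper $\ast$-regular subring of $\mathcal{R}_{\scalebox{0.8}{$K[G]$}}$ containing the image of $\mathscr{RG}_\Gamma$ is already closed under inverting full maps. Here one needs a genuine normal-form/length argument: writing a typical element of $K[G]$ via Bass--Serre normal form as a product of vertex-group elements interleaved by edge transversals, and then exhibiting each such syllable as obtained from $\mathscr{RG}_\Gamma$ by successively inverting full matrices — this is where the finiteness of the edge groups and the bound on $\lcm(G)$ are used to control the sizes of the matrices and to guarantee fullness. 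I expect the amalgamated-product case to be handled by a "two-sided" matrix factorization (à la the construction of the Cohn localization of a free product), and the HNN case to require an extra idempotent-conjugation trick to encode the stable letter; the direct-limit passage at the end is then routine since all the notions in play are compatible with filtered colimits.
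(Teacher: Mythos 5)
Your high-level plan matches the paper's: reduce to the amalgamated product and HNN base cases with finite edge group, handle the general (infinite) graph by an exhaustion into finite subgraphs and a direct-limit argument, derive (i)/(ii) from the structure of projective modules over the graph of rings (the edge rings being semisimple because the edge groups are finite), and then get (iv) from (iii) via a $K_0$-argument. Parts (i), (ii), and (iv) in your outline are essentially the arguments the paper uses.

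The genuine gap is in part (iii). You invoke ``Malcolmson's / Schofield's criterion,'' but what you actually write is not the correct hypothesis of that criterion. Schofield's recognition theorem (the paper's \cref{thm: recogn_universal_localization}) requires: $\mathscr{RG}_\Gamma$ hereditary, the map $\phi$ epic onto a semisimple ring, the pulled-back rank faithful on finitely generated projectives, \emph{and} the vanishing $\Tor_1^{\mathscr{RG}_\Gamma}\bigl(\mathcal{R}_{\scalebox{0.8}{$K[G]$}},\mathcal{R}_{\scalebox{0.8}{$K[G]$}}\bigr) = 0$. Your replacement for this last hypothesis --- showing that $\mathcal{R}_{\scalebox{0.8}{$K[G]$}}$ is \emph{generated} by the image of $\mathscr{RG}_\Gamma$ together with inverses of full maps, via a Bass--Serre normal-form argument --- is both unproven (you explicitly flag it as ``the main obstacle'' and only say ``I expect'') and insufficient even if carried out: generation only yields that the canonical map from the universal localization $\Sigma^{-1}\mathscr{RG}_\Gamma$ to $\mathcal{R}_{\scalebox{0.8}{$K[G]$}}$ is surjective, not that it is an isomorphism. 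The $\Tor_1$-vanishing is precisely what rules out a kernel, and it is the technical heart of the whole argument. In the paper it is proved for the amalgamated product case (\cref{prop: vanishing_Tor1_amalg}) by a delicate analytic computation with finite-rank operators and traces on $\ell^2(G)$ (\cref{lem: amalg_prod_intersection_augment_ideal}, which identifies $F(\ell^2(G))\cdot I_{\scalebox{0.8}{$\C[A]$}} \cap F(\ell^2(G))\cdot I_{\scalebox{0.8}{$\C[B]$}} = F(\ell^2(G))\cdot I_{\scalebox{0.8}{$\C[C]$}}$), together with a Mayer--Vietoris sequence for the coproduct of rings, and then for the HNN case (\cref{prop: vanishing_Tor1_HNN}) by writing the HNN extension as a cyclic extension of an infinite tree amalgam and passing to an Ore localization. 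None of this is visible in your sketch, and without it the proposal does not establish (iii), hence not (iv) either.

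A secondary point: the passage to the direct limit in (iii) needs more care than ``universal localization is stable under composition and base change.'' The transitivity statement used (the paper's \cref{prop: local_transitive}) requires the second set of maps to be between \emph{stably induced} projectives; that this holds at each stage is itself something the paper proves using the $K_0$-surjectivity from \cref{prop: proj_mod_GR} and the structure of projectives over the finite graph of rings, and is part of why the induction on finite subgraphs is set up the way it is.
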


    In the survey article \cite{Jaikin_l2survey}, Jaikin-Zapirain introduces a variation of the strong Atiyah conjecture in terms of the finitely generated projective left modules of $\mathcal{R}_{\scalebox{0.8}{$K[G]$}}$, the so-called {\it algebraic Atiyah conjecture}. This conjecture predicts that given a countable group $G$ with finite $\lcm(G)$ and $K\subseteq \C$ a field closed under complex conjugation, the map 
    \[
    \bigoplus_{F \leqslant G, |F|<\infty} K_0(K[F]) \rightarrow K_0(\mathcal{R}_{\scalebox{0.8}{$K[G]$}})
    \]
    is surjective. Note that as a consequence of \cref{thm: GG_univ_local_&_K_0} we can deduce that the algebraic Atiyah conjecture is also well-behaved under the graph of groups construction.

    \begin{cor}\label{cor: AAC_GG}
        Let $\mathscr G_\Gamma = (G_v, G_e)$ be a graph of groups with countable fundamental group $G$. Assume that every vertex group $G_v$ satisfies the algebraic Atiyah conjecture over $K \subseteq \C$ a field closed under complex conjugation and that every edge group $G_e$ is finite. Assume in addition that $\lcm(G)$ is finite. Then $G$ satisfies the algebraic Atiyah conjecture over $K$.
    \end{cor}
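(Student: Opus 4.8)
The plan is to deduce this directly from \cref{thm: GG_univ_local_&_K_0}(iv) together with the hypothesis on the vertex groups, so the argument will be a short diagram chase. Note first that $\mathcal{R}_{\scalebox{0.8}{$K[G]$}}$ is defined for any countable group and that $\lcm(G)<\infty$ by assumption, so the statement of the algebraic Atiyah conjecture for $G$ is meaningful; concretely, I must show that
\[
\bigoplus_{F \leqslant G,\ |F| < \infty} K_0(K[F]) \longrightarrow K_0(\mathcal{R}_{\scalebox{0.8}{$K[G]$}})
\]
is surjective, where the map sends the class of a finitely generated projective $K[F]$-module $P$ to $[\mathcal{R}_{\scalebox{0.8}{$K[G]$}} \otimes_{K[F]} P]$.

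Next I would fix a vertex $v$ and a finite subgroup $F \leqslant G_v$ and record the commutativity of the relevant square. By construction, the vertex component $\phi_v \colon \mathcal{R}_{\scalebox{0.8}{$K[G_v]$}} \to \mathcal{R}_{\scalebox{0.8}{$K[G]$}}$ of $\phi$ is the ring homomorphism extending the inclusion $K[G_v] \hookrightarrow K[G]$. Hence the two composites $K[F] \to K[G_v] \to \mathcal{R}_{\scalebox{0.8}{$K[G_v]$}} \to \mathcal{R}_{\scalebox{0.8}{$K[G]$}}$ (the last map being $\phi_v$) and $K[F] \to K[G] \to \mathcal{R}_{\scalebox{0.8}{$K[G]$}}$ agree, so on $K_0$ the image of any class coming from $K_0(K[F])$ through $K_0(\mathcal{R}_{\scalebox{0.8}{$K[G_v]$}})$ coincides with its image through the map $K_0(K[F]) \to K_0(\mathcal{R}_{\scalebox{0.8}{$K[G]$}})$ induced by $K[F] \hookrightarrow K[G]$. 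Since $F$ is in particular a finite subgroup of $G$, this image lies in the image of $\bigoplus_{F' \leqslant G,\ |F'| < \infty} K_0(K[F'])$.

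Then I would invoke the hypothesis that $G_v$ satisfies the algebraic Atiyah conjecture over $K$: the map $\bigoplus_{F \leqslant G_v,\ |F|<\infty} K_0(K[F]) \to K_0(\mathcal{R}_{\scalebox{0.8}{$K[G_v]$}})$ is surjective. Combined with the previous step, the image of $K_0(\mathcal{R}_{\scalebox{0.8}{$K[G_v]$}})$ in $K_0(\mathcal{R}_{\scalebox{0.8}{$K[G]$}})$ is contained in the image of $\bigoplus_{F \leqslant G,\ |F|<\infty} K_0(K[F])$. Summing over all vertices $v$ and applying \cref{thm: GG_univ_local_&_K_0}(iv), which gives surjectivity of $\bigoplus_{v \in V} K_0(\mathcal{R}_{\scalebox{0.8}{$K[G_v]$}}) \to K_0(\mathcal{R}_{\scalebox{0.8}{$K[G]$}})$, yields surjectivity of $\bigoplus_{F \leqslant G,\ |F|<\infty} K_0(K[F]) \to K_0(\mathcal{R}_{\scalebox{0.8}{$K[G]$}})$, which is exactly the algebraic Atiyah conjecture for $G$.

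The only substantive ingredient is \cref{thm: GG_univ_local_&_K_0}(iv); everything else is bookkeeping, and indeed \cref{thm: SAC_GG} is not needed here. The one point that deserves a line of care is the compatibility of $\phi$ with the group-ring inclusions $K[F] \hookrightarrow K[G_v] \hookrightarrow K[G]$ at the level of induced modules (equivalently, that $\mathcal{R}_{\scalebox{0.8}{$K[G]$}} \otimes_{\mathcal{R}_{\scalebox{0.8}{$K[G_v]$}}} \big(\mathcal{R}_{\scalebox{0.8}{$K[G_v]$}} \otimes_{K[F]} P\big) \cong \mathcal{R}_{\scalebox{0.8}{$K[G]$}} \otimes_{K[F]} P$ as $\mathcal{R}_{\scalebox{0.8}{$K[G]$}}$-modules), but this is immediate from the construction of the graph of rings $\mathscr{RG}_\Gamma$ and of the homomorphism $\phi$ set up earlier.
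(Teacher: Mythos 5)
Your proof is correct and matches the paper's intended argument: Corollary \ref{cor: AAC_GG} is obtained by composing the surjective algebraic Atiyah maps of the vertex groups with the surjection of Theorem \ref{thm: GG_univ_local_&_K_0}(iv), exactly as the paper does for the amalgamated product and HNN special cases (Theorems \ref{thm: AAC_amal_prod} and \ref{thm: AAC_HNN_prod}). One tiny bookkeeping item you leave implicit: invoking Theorem \ref{thm: GG_univ_local_&_K_0}(iv) requires the vertex groups to satisfy the \emph{strong} Atiyah conjecture, which holds because (as noted in the paper right after the definition of the algebraic Atiyah conjecture) the algebraic conjecture implies the strong one.
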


    There is another refinement of the Atiyah conjecture known as the \textit{center-valued Atiyah conjecture}. Just as $\dim_{\scalebox{0.8}{$\VN(G)$}}$ is obtained from the von Neumann trace $\trace_{\scalebox{0.8}{$\VN(G)$}}$, one can construct generalized dimension functions out of trace-like functions taking values over the center $\mathcal{Z}(\VN(G))$. Fortunately, among all of them there is a universal choice, termed the \textit{center-valued trace} $\trace^u_{\scalebox{0.8}{$\VN(G)$}}$, in the sense that all possible traces on $\VN(G)$ taking values in the center $\mathcal{Z}(\VN(G))$ factor through it cf. \cite[Theorem 9.5]{Luck02}. Denote by $\dim^u_{\scalebox{0.8}{$\VN(G)$}}$ its associated dimension function. The center-valued Atiyah conjecture claims that given a countable group $G$ with finite $\lcm(G)$ and $K\subseteq \C$ a field closed under complex conjugation, for every finitely presented left $K[G]$-module $M$ it holds that
    \[
    \dim^u_{\scalebox{0.8}{$\VN(G)$}}(\VN(G) \otimes_{\scalebox{0.8}{$K[G]$}} M) \in L_K(G),
    \]
    where $L_K(G)$ is the additive subgroup of $\mathcal{Z}(\VN(G))$ generated by 
    \[
    \{ \trace^u_{\scalebox{0.8}{$\VN(G)$}}(p) : p \in K[F], p = p^2 = p^{\ast}, F \leqslant G, |F| < \infty \}.
    \]
    From the work of Knebusch, Linnell and Schick \cite[Theorem 3.7]{KLS_centervaluedA}, Henneke showed in \cite[Theorem 3.14]{FabianHennekeThesis} that the center-valued Atiyah conjecture is equivalent to the algebraic Atiyah conjecture. Consequently, we immediately get the same permanence result for the center-valued Atiyah conjecture.

    \begin{cor}
        Let $\mathscr G_\Gamma = (G_v, G_e)$ be a graph of groups with countable fundamental group $G$. Assume that every vertex group $G_v$ satisfies the center-valued Atiyah conjecture over $K \subseteq \C$ a field closed under complex conjugation and that every edge group $G_e$ is finite. Assume in addition that $\lcm(G)$ is finite. Then $G$ satisfies the center-valued Atiyah conjecture over $K$.
    \end{cor}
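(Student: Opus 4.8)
The plan is to deduce the statement formally from \cref{cor: AAC_GG} together with Henneke's equivalence between the center-valued and the algebraic Atiyah conjectures \cite[Theorem 3.14]{FabianHennekeThesis} (building on \cite[Theorem 3.7]{KLS_centervaluedA}). The argument is a short bootstrap: transport the hypothesis from ``center-valued'' to ``algebraic'' at the level of the vertex groups, apply \cref{cor: AAC_GG}, and then transport back for $G$. The only thing to watch is that the bounded-torsion hypothesis needed for the cited equivalence holds at each stage.

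First I would note that, by Bass--Serre theory, each vertex group $G_v$ embeds into $G$, so every finite subgroup of $G_v$ is a finite subgroup of $G$; hence $\lcm(G_v)$ divides $\lcm(G)$ and is in particular finite. Applying the equivalence \cite[Theorem 3.14]{FabianHennekeThesis} to $G_v$, the assumption that $G_v$ satisfies the center-valued Atiyah conjecture over $K$ gives that $G_v$ satisfies the algebraic Atiyah conjecture over $K$. Consequently $\mathscr G_\Gamma$ meets all the hypotheses of \cref{cor: AAC_GG}: the vertex groups satisfy the algebraic Atiyah conjecture over the conjugation-closed field $K$, the edge groups are finite, and $\lcm(G)$ is finite.

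By \cref{cor: AAC_GG}, $G$ satisfies the algebraic Atiyah conjecture over $K$. Since $\lcm(G)$ is finite by hypothesis, a final application of \cite[Theorem 3.14]{FabianHennekeThesis}, this time to $G$ itself, yields that $G$ satisfies the center-valued Atiyah conjecture over $K$, as desired. As the proof is a chain of citations, there is no genuine obstacle; the only subtlety — the need for bounded torsion in the cited equivalence — is handled by the embeddings $G_v \hookrightarrow G$ and by the standing assumption on $\lcm(G)$.
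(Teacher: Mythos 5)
Your proof is correct and is essentially the argument the paper intends: the corollary is stated as an immediate consequence of \cref{cor: AAC_GG} via Henneke's equivalence between the center-valued and algebraic Atiyah conjectures, which is exactly your chain of citations. The check that each $\lcm(G_v)$ is finite (via the embeddings $G_v \hookrightarrow G$) is a sensible detail to make explicit, though the paper leaves it tacit.
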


    Given a group $G$, let $\Sigma(K,G)$ denote the set of matrices over $K[G]$ that are invertible over $\UO(G)$. Reich showed in \cite[Theorem 8.3]{ReichThesis} that $\mathcal{R}_{\scalebox{0.8}{$K[G]$}}$ is the universal localization of $K[G]$ over the set $\Sigma(K,G)$ whenever $G$ lies in Linnell's class $\mathcal{C}$ and has finite $\lcm(G)$. Here we prove the same result for another class of groups.

    \begin{defn}
        Let $\mathcal{T}$ denote the smallest class of groups which
        \begin{enumerate}[label=(\roman*)]
            \item contains all finite groups;
            \item is closed under directed unions; and
            \item satisfies $G \in \mathcal{T}$ whenever $G$ is the fundamental group of a graph of groups in $\mathcal{T}$.
        \end{enumerate}
    \end{defn}

    The class $\mathcal{T}$ should enjoy the several Atiyah conjectures and the localization property; however, the techniques developed in the present paper do not seem to be enough. To this end, we shall restrict to the subclass $\mathcal{T}_{\scalebox{0.7}{$\mathcal{VLI}$}}$ where \textit{(iii)} is only allowed provided that $G$ is in addition virtually-\{locally indicable\}.

    \begin{thm}\label{thm: class_TVLI}
        Let $G$ be a countable group in the class $\mathcal{T}_{\scalebox{0.7}{$\mathcal{VLI}$}}$ with finite $\lcm(G)$ and $K \subseteq \C$ a field closed under complex conjugation. Then the following holds:
        \begin{enumerate}[label=(\roman*)]
            \item The ring $\mathcal{R}_{\scalebox{0.8}{$K[G]$}}$ is semisimple and is the universal localization of $K[G]$ over $\Sigma(K,G)$; and
            \item $G$ satisfies the algebraic Atiyah conjecture over $K$. In particular, G also satisfies the strong and center-valued Atiyah conjecture over $K$.
        \end{enumerate}
    \end{thm}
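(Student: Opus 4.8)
The plan is to argue by induction on the construction of the class $\mathcal{T}_{\scalebox{0.7}{$\mathcal{VLI}$}}$, proving both items simultaneously. I would begin with two reductions. First, \textit{(ii)} subsumes the rest: the cited equivalence of the algebraic and center-valued Atiyah conjectures gives the center-valued statement, and once the algebraic Atiyah conjecture is known every finitely generated projective $\mathcal{R}_{\scalebox{0.8}{$K[G]$}}$-module has $L^2$-dimension in $\tfrac1{\lcm(G)}\Z$, so --- $\mathcal{R}_{\scalebox{0.8}{$K[G]$}}$ being $\ast$-regular, hence von Neumann regular --- Linnell's argument forces it to be semisimple Artinian, which in turn pins down the denominators in the strong Atiyah conjecture. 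So it suffices to establish (a) that $\bigoplus_{F\leqslant G,\,|F|<\infty}K_0(K[F])\to K_0(\mathcal{R}_{\scalebox{0.8}{$K[G]$}})$ is surjective, and (b) that $\mathcal{R}_{\scalebox{0.8}{$K[G]$}}=K[G]\Sigma(K,G)^{-1}$. Second, for (b) I would repeatedly use the following soft principle: since $\mathcal{R}_{\scalebox{0.8}{$K[G]$}}$ is a semisimple Artinian subring of $\UO(G)$, a matrix over $K[G]$ is invertible in $\mathcal{R}_{\scalebox{0.8}{$K[G]$}}$ as soon as it is invertible in $\UO(G)$ (a non-unit of a semisimple Artinian ring is a zero-divisor, and being a zero-divisor passes to overrings); hence if $\mathcal{R}_{\scalebox{0.8}{$K[G]$}}$ can be exhibited as $K[G](\Sigma')^{-1}$ for \emph{some} set $\Sigma'$ of matrices that become invertible in $\UO(G)$, then $\Sigma'\subseteq\Sigma(K,G)$, the two universal localizations satisfy each other's universal property, and $K[G](\Sigma')^{-1}\cong K[G]\Sigma(K,G)^{-1}$.

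The base case $G$ finite is immediate: $K[G]$ is semisimple by Maschke, so $\mathcal{R}_{\scalebox{0.8}{$K[G]$}}=K[G]$, nothing has to be inverted in (b), and (a) holds with $F=G$. For a directed union $G=\varinjlim_i G_i$ with $G_i\in\mathcal{T}_{\scalebox{0.7}{$\mathcal{VLI}$}}$ (so $\lcm(G_i)\mid\lcm(G)<\infty$), I would invoke the continuity of the $\ast$-regular closure along directed unions to get $\mathcal{R}_{\scalebox{0.8}{$K[G]$}}=\varinjlim_i\mathcal{R}_{\scalebox{0.8}{$K[G_i]$}}$, compatibly with the inclusions $\UO(G_i)\hookrightarrow\UO(G)$. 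Since every matrix over $K[G]$ and every finite subgroup of $G$ already lives over some $G_i$, since $\Sigma(K,G)=\bigcup_i\Sigma(K,G_i)$, and since both $K_0$ and universal localization commute with directed colimits, items (a) and (b) for $G$ follow from the inductive hypotheses for the $G_i$.

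The heart of the proof is the remaining case: $G=\pi_1(\mathscr G_\Gamma)$ with all vertex and edge groups in $\mathcal{T}_{\scalebox{0.7}{$\mathcal{VLI}$}}$ and $G$ virtually-\{locally indicable\}. Here I would \emph{not} recurse into the vertex groups but instead exploit virtual local indicability directly. Choose a finite-index normal subgroup $N\trianglelefteq G$ that is locally indicable, put $Q=G/N$, and write $K[G]=K[N]\ast_\sigma Q$ as a crossed product. By Jaikin-Zapirain and L\'opez-\'Alvarez \cite{JaikinLopez_Atiyah}, $\mathcal{R}_{\scalebox{0.8}{$K[N]$}}=\mathcal{D}_{K[N]}$ is a division ring --- the Hughes-free one --- equal to $K[N]\Sigma(K,N)^{-1}$, and the algebraic Atiyah conjecture holds trivially for the torsion-free group $N$. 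Now $\mathcal{D}_{K[N]}\ast_\sigma Q$ is semisimple Artinian by Maschke for crossed products ($|Q|$ being invertible in characteristic zero), it is $\ast$-regular because it is semisimple and carries the proper involution inherited from $\UO(G)$, and, sitting between $K[G]$ and $\UO(G)$ while containing $K[G]$ and $\mathcal{D}_{K[N]}=\mathcal{R}_{\scalebox{0.8}{$K[N]$}}\subseteq\mathcal{R}_{\scalebox{0.8}{$K[G]$}}$ (monotonicity of the $\ast$-regular closure under subgroups), it must equal $\mathcal{R}_{\scalebox{0.8}{$K[G]$}}$; in particular $\mathcal{R}_{\scalebox{0.8}{$K[G]$}}$ is semisimple. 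As $\Sigma(K,N)$ is stable under the $Q$-action, $(K[N]\Sigma(K,N)^{-1})\ast_\sigma Q$ is the universal localization of $K[G]=K[N]\ast_\sigma Q$ at the image of $\Sigma(K,N)$, so $\mathcal{R}_{\scalebox{0.8}{$K[G]$}}=K[G](\Sigma')^{-1}$ with $\Sigma'\subseteq\Sigma(K,G)$, and (b) follows from the soft principle above.

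What remains, and the step I expect to be the main obstacle, is (a) in this last case: one must show that $K_0\big(\mathcal{D}_{K[N]}\ast_\sigma Q\big)$ is generated by the images of the maps $K_0(K[F])\to K_0(\mathcal{R}_{\scalebox{0.8}{$K[G]$}})$ as $F$ runs over the finite subgroups of $G$ --- which, since $F\cap N=1$, are exactly the lifts of subgroups of $Q$. This is the algebraic Atiyah conjecture for a finite extension of a group whose $\Sigma$-rational closure is a division ring, and I would deduce it from the finite-extension techniques for the Atiyah conjecture in the spirit of Linnell and Schick, combined with \cite{FabianHennekeThesis} for the passage to the $K$-theoretic formulation; the essential point is that $\mathcal{D}_{K[N]}$ is the Hughes-free division ring, which is what makes the $K_0$-induction from the finite subgroups surjective. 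When in addition every edge group $G_e$ is finite there is an alternative route to (a): \cref{thm: GG_univ_local_&_K_0}\textit{(iv)} makes $\bigoplus_v K_0(\mathcal{R}_{\scalebox{0.8}{$K[G_v]$}})\to K_0(\mathcal{R}_{\scalebox{0.8}{$K[G]$}})$ surjective, and one composes with the inductive hypothesis for the vertex groups. This completes the induction.
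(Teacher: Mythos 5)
Your broad architecture --- transfinite induction on the construction of $\mathcal{T}_{\scalebox{0.7}{$\mathcal{VLI}$}}$, passing to a finite-index normal locally indicable subgroup $N$, identifying $\mathcal{R}_{\scalebox{0.8}{$K[G]$}}$ with the crossed product $\mathcal{R}_{\scalebox{0.8}{$K[N]$}}\ast_\sigma (G/N)$, and the ``soft principle'' that a matrix invertible in $\UO(G)$ is automatically invertible in a semisimple $\ast$-subring --- all matches the paper. The reductions (finite groups, directed unions) and the use of \cite{JaikinLopez_Atiyah} to get that $\mathcal{R}_{\scalebox{0.8}{$K[N]$}}$ is a division ring are also correct and in the paper. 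But there are two genuine gaps, precisely at the two points where you break from the paper.

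First, for (b): you assert that Jaikin-Zapirain--L\'opez-\'Alvarez gives you $\mathcal{R}_{\scalebox{0.8}{$K[N]$}}=K[N]\Sigma(K,N)^{-1}$ for the locally indicable group $N$. That is not what their theorem says. Theorem 1.1 of \cite{JaikinLopez_Atiyah} produces a Hughes-free epic division ring of fractions and shows it equals $\mathcal{R}_{\scalebox{0.8}{$K[N]$}}$; it does \emph{not} show that this division ring is a universal localization of $K[N]$. That property is open for general locally indicable groups, and it is exactly the piece of work the paper does not try to shortcut. Instead the paper pushes the Bass--Serre tree of $G$ down to $H$ (your $N$), getting a graph of groups decomposition of $H$ whose vertex and edge groups remain locally in the previous stratum $\mathcal{Y}_b$, hence satisfy the inductive hypothesis. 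Then it invokes \cite[Lemma 7.6]{FisherPeralta_Kaplansky'sZD3mfld} (which applies because $H$ is locally indicable) to make $\mathcal{R}_{\scalebox{0.8}{$K[H]$}}$ a universal localization over full matrices of the graph of rings $\mathscr{RH}_{\Gamma_H}$, and finally applies \cref{prop: local_transitive} and \cref{lem: invertiv_is_full} to land on $K[H]\Sigma(K,H)^{-1}$. You cannot skip this chain: the universal-localization property for $N$ does not come for free from local indicability.

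Second, for (a): you explicitly flag the $K_0$-surjectivity as the main obstacle, and what you write there is only a statement of intent (``finite-extension techniques in the spirit of Linnell and Schick \ldots the essential point is that $\mathcal{D}_{K[N]}$ is Hughes-free''). There is no proof that $K_0(\mathcal{D}_{K[N]}\ast_\sigma Q)$ is generated by the images of $K_0(K[F])$ over the finite $F\leqslant G$, and I do not know of a general theorem of this form. The paper instead routes this through the graph of groups decomposition of $G$ itself: combine \cref{prop: proj_mod_GR} and \cref{prop: project_mod_localization} (once $\mathcal{R}_{\scalebox{0.8}{$K[G]$}}$ is known to be a universal localization of the hereditary ring $\mathscr{RG}_\Gamma$) to get $\bigoplus_v K_0(\mathcal{R}_{\scalebox{0.8}{$K[G_v]$}})\twoheadrightarrow K_0(\mathcal{R}_{\scalebox{0.8}{$K[G]$}})$, and then compose with the inductive hypothesis that each $G_v$ satisfies the algebraic Atiyah conjecture. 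This is spelled out in \cref{prop: virt_localization_and_GG}, whose hypotheses include the algebraic Atiyah conjecture for the $G_v$ and $G_e$ --- so the recursion into the vertex groups that you try to avoid is in fact indispensable for item (a), and without it the argument does not close.
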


    In \cite{LuckLinnell_localization}, Linnell and L\"uck study a variation of the $K_1$-group denoted by $K_1^w(R[G])$ for a ring $\Z \subseteq R \subseteq \C$, where the square $n$-matrices over $R[G]$ need not be invertible but weak isomorphisms instead, that is, injective and such that the associated right multiplication operator in $\ell^2(G)^n$ has dense image. This group is also interesting for the universal $L^2$-torsion introduced in \cite{FriedlLuck_l2torsion}. They showed that for torsion-free groups in Linnell's class $\mathcal{C}$, $K_1^w(R[G])$ can be completely understood in terms of $\mathcal{R}_{\scalebox{0.8}{$F[G]$}}$ where $F \subseteq \C$ is the field of fractions of $R$. The situation here is similar:

    \begin{thm} \label{thm: Linnell_Luck_localization}
        Let $G$ be a countable group in the class $\mathcal{T}_{\scalebox{0.7}{$\mathcal{VLI}$}}$ with finite $\lcm(G)$ and let $R$ be a ring with $\Z \subseteq R \subseteq \C$ closed under complex conjugation. Denote by $F \subseteq \C$ its field of fractions. Then $\mathcal{R}_{\scalebox{0.8}{$F[G]$}}$ is a semisimple ring, a universal localization of $F[G]$ and there are isomorphisms
        \[
        K_1^w(R[G]) \overset{\cong}{\longrightarrow} K_1(\mathcal{R}_{\scalebox{0.8}{$F[G]$}}) \overset{\cong}{\longrightarrow} \prod_{i=1}^l \mathcal{D}_i^{\times}/[\mathcal{D}_i^{\times}, \mathcal{D}_i^{\times}]
        \]
        where $\mathcal{D}_i$ are the division rings corresponding to the Artin--Wedderburn decomposition of $\mathcal{R}_{\scalebox{0.8}{$F[G]$}}$.
    \end{thm}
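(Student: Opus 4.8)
The plan is to reduce the first two assertions to \cref{thm: class_TVLI} and to obtain the $K_1$-statements by combining the standard structure theory of $K_1$ of a semisimple ring with an adaptation of the $K_1$-theoretic machinery of Linnell and L\"uck \cite{LuckLinnell_localization}. Since $R$ is closed under complex conjugation so is its field of fractions $F \subseteq \C$, hence \cref{thm: class_TVLI}(i) applied with $K = F$ already yields that $\mathcal{R}_{\scalebox{0.8}{$F[G]$}}$ is semisimple and is a universal localization of $F[G]$ (namely the one inverting $\Sigma(F,G)$). Writing its Artin--Wedderburn decomposition as $\mathcal{R}_{\scalebox{0.8}{$F[G]$}} \cong \prod_{i=1}^{l} M_{n_i}(\mathcal{D}_i)$, the facts that $K_1$ commutes with finite products, is Morita invariant, and sends a division ring to its abelianised unit group (via the Dieudonn\'e determinant) give at once $K_1(\mathcal{R}_{\scalebox{0.8}{$F[G]$}}) \cong \prod_{i=1}^{l}\mathcal{D}_i^{\times}/[\mathcal{D}_i^{\times},\mathcal{D}_i^{\times}]$, the second isomorphism in the statement.

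It remains to construct the isomorphism $K_1^{w}(R[G]) \cong K_1(\mathcal{R}_{\scalebox{0.8}{$F[G]$}})$. Two preliminary observations are needed. First, writing $\Sigma(R,G)$ for the set of square matrices over $R[G]$ that become invertible in $\UO(G)$, the weak isomorphisms over $R[G]$ are precisely the members of $\Sigma(R,G)$: indeed $\UO(G)$ is the Ore localization of $\VN(G)$ at its non-zero-divisors, so a square matrix over $\VN(G)$ is invertible in $\UO(G)$ exactly when its right-multiplication operator on $\ell^{2}(G)^{n}$ is injective with dense image. Second, $\mathcal{R}_{\scalebox{0.8}{$F[G]$}}$ is moreover the universal localization of $R[G]$ at $\Sigma(R,G)$: since the nonzero scalars of $R$ lie in $\Sigma(R,G)$, the central localization $F[G]$ embeds in $R[G]_{\Sigma(R,G)}$, and clearing denominators shows that every matrix of $\Sigma(F,G)$ becomes invertible there, so the universal properties of $\mathcal{R}_{\scalebox{0.8}{$F[G]$}}=F[G]_{\Sigma(F,G)}$ and of $R[G]_{\Sigma(R,G)}$ produce mutually inverse ring maps between them. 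In particular the canonical map $R[G]\to\mathcal{R}_{\scalebox{0.8}{$F[G]$}}$ carries weak isomorphisms to invertible matrices and, by functoriality of this construction, induces a homomorphism $\theta\colon K_1^{w}(R[G])\to K_1(\mathcal{R}_{\scalebox{0.8}{$F[G]$}})$.

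Surjectivity of $\theta$ follows from Malcolmson's normal form: every invertible matrix $M$ over $R[G]_{\Sigma(R,G)}$ can be written $M=U\sigma^{-1}V$ with $\sigma\in\Sigma(R,G)$ and $U,V$ matrices over $R[G]$, and a standard block-elementary manipulation then gives $[M]=[N]-[\sigma]$ in $K_1$, where $N=\bigl(\begin{smallmatrix}0&U\\-V&\sigma\end{smallmatrix}\bigr)$ is a matrix over $R[G]$ which, being invertible over $\mathcal{R}_{\scalebox{0.8}{$F[G]$}}\subseteq\UO(G)$, is a weak isomorphism; thus $[M]=\theta([N]-[\sigma])$. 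Injectivity is the substantial point, and here I would follow the argument of \cite{LuckLinnell_localization}: one shows that the defining relations of $K_1^{w}(R[G])$ reproduce exactly the identifications made in $K_1(\mathcal{R}_{\scalebox{0.8}{$F[G]$}})$ once $\Sigma(R,G)$ is inverted. The proof there uses only that $\lcm(G)$ is finite and that $\mathcal{R}_{\scalebox{0.8}{$F[G]$}}$ is a von Neumann regular universal localization of $F[G]$ at $\Sigma(F,G)$ --- the role played there by Reich's theorem \cite{ReichThesis} for $G$ in Linnell's class $\mathcal{C}$ now being supplied by \cref{thm: class_TVLI} --- together with the computation of $K_1$ of the $\ast$-regular closure from the first paragraph; for groups with torsion one replaces the Linnell skew field used in \cite{LuckLinnell_localization} by the semisimple ring $\mathcal{R}_{\scalebox{0.8}{$F[G]$}}$ throughout. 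Composing the two isomorphisms produces the chain asserted in the theorem.

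The main obstacle is precisely this last step: one must go through the injectivity argument of \cite{LuckLinnell_localization} and confirm that its dependence on membership in $\mathcal{C}$ and on torsion-freeness is only through the two inputs isolated above --- the universal localization property of $\mathcal{R}_{\scalebox{0.8}{$F[G]$}}$ and the structure of its $K_1$-group --- so that, with these provided by \cref{thm: class_TVLI} and Artin--Wedderburn, the whole argument carries over to every $G\in\mathcal{T}_{\scalebox{0.7}{$\mathcal{VLI}$}}$.
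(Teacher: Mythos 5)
Your proposal matches the paper's proof in all essentials: both obtain semisimplicity and the universal localization property from \cref{thm: class_TVLI} applied with $K = F$, both derive the second isomorphism from the Artin--Wedderburn decomposition via Morita invariance and the Dieudonn\'e determinant, and both reduce the first isomorphism to identifying weak isomorphisms over $R[G]$ with matrices invertible in $\UO(G)$ (equivalently, in the von Neumann regular ring $\mathcal{R}_{\scalebox{0.8}{$F[G]$}}$) and then invoking the Linnell--L\"uck machinery. The only difference is one of exposition: where the paper simply cites \cite[Theorem 1.2]{LuckLinnell_localization} to close the gap, you unfold part of its content — constructing $\theta$, checking surjectivity via Malcolmson's normal form, and observing that $\mathcal{R}_{\scalebox{0.8}{$F[G]$}}$ is also the universal localization of $R[G]$ at $\Sigma(R,G)$ — while candidly deferring injectivity to the same reference, which is precisely what the paper's citation does in one stroke.
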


\subsection*{Organization of the paper}

    In \cref{sec: prelims}, we recall some notions that will appear throughout the paper and prove some preliminary results on universal localizations. In \cref{sec: amalg_free_prod}, we study the strong Atiyah conjecture for amalgamated free products over finite groups and show its inheritance behaviour as well as the fact that the $\ast$-regular closure is a universal localization. In \cref{sec: HNN_ext}, we establish the analog results in the context of HNN-extensions. The statements shown in these two former sections are geared towards proving in \cref{sec: GG} that the strong and algebraic Atiyah conjectures are closed under the graph of groups construction if the edge groups are finite; we also show that the $\ast$-regular closure is a universal localization. In \cref{sec: class_T}, we extend the properties of Linnell's class $\mathcal{C}$ to $\mathcal{T}_{\scalebox{0.7}{$\mathcal{VLI}$}}$. We finish the paper with \cref{sec: comments_questions} where we discuss possible generalizations of our results.

\subsection*{Acknowledgments} 
    
    The author is grateful to Andrei Jaikin-Zapirain for many helpful conversations and for having suggested him the finite rank operators point of view towards the strong Atiyah conjecture. I am grateful to an anonymous referee, whose comments and corrections have much improved the article. The author is supported by \seqsplit{PID2020-114032GB-I00/AEI/10.13039/501100011033} of the Ministry of Science and Innovation of Spain.

\section{Preliminaries} \label{sec: prelims}

    Throughout, groups are countable, rings are assumed to be associative and unital, ring homomorphisms preserve the unit and modules are left modules unless otherwise specified.

\subsection{Von Neumann regular and \texorpdfstring{$\ast$}--regular rings}

    A ring $\UO$ is called {\it von Neumann regular} if for every $x\in \UO$ there exists $y\in \UO$ such that $xyx =x$. From a homological point of view, these rings possess a nice feature, namely all left (and right) modules are flat (cf. \cite[Corollary 1.13]{GoodearlvNr}). Furthermore:

    \begin{prop}{\cite[Theorem 1.11 \& Proposition 2.6]{GoodearlvNr}} \label{prop: vNr_proj_mod}
        Let $\UO$ be a von Neumann regular ring and $M$ a finitely presented left $\UO$-module. Then $M$ is projective and it admits a direct sum decomposition into cyclic projectives $\UO$-modules.
    \end{prop}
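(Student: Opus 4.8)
The plan is to prove the two assertions in turn: first that $M$ is projective, and then that any finitely generated projective left $\UO$-module decomposes as a finite direct sum of cyclic projectives. For projectivity I would exploit the homological characterisation of von Neumann regularity recalled above: every left $\UO$-module is flat, so it suffices to invoke the general fact that a finitely presented flat module is projective. Concretely, pick a presentation $\UO^a \xrightarrow{f} \UO^b \to M \to 0$; the kernel $K=\im f$ is then finitely generated, and flatness of $M$ makes the short exact sequence $0 \to K \to \UO^b \to M \to 0$ pure, while a pure-exact sequence with finitely presented cokernel splits. Hence $M$ is a direct summand of $\UO^b$, in particular finitely generated projective.

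Alternatively — and this is the route I would spell out if I wanted the argument to be self-contained — I would show directly that every finitely generated left ideal of $\UO$ is generated by an idempotent: iterate the defining identity $xyx=x$, which gives $\UO x=\UO(xy)$ with $xy$ idempotent, and orthogonalise the idempotents when adjoining a further generator. By induction on the number of generators this yields that every finitely generated submodule of a free $\UO$-module is a direct summand (hence projective), and applying this to $K\subseteq\UO^b$ again gives that $M$ is projective.

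For the decomposition into cyclics I would induct on the least $n$ with $M$ isomorphic to a direct summand of $\UO^n$, which exists since $M$ is finitely generated projective. The case $n=1$ is immediate, a direct summand of $\UO$ being $\UO e$ for an idempotent $e$. For the inductive step, let $\pi\colon\UO^n\to\UO$ be the projection onto the last coordinate and restrict it to $M$; the image $\pi(M)$ is a finitely generated left ideal, hence of the form $\UO e$ with $e$ idempotent and in particular projective, so the exact sequence $0\to M\cap\UO^{n-1}\to M\to\pi(M)\to 0$ splits and $M\cong(M\cap\UO^{n-1})\oplus\UO e$. The summand $M\cap\UO^{n-1}$ is finitely generated projective (a direct summand of $M$) and lies inside $\UO^{n-1}$, hence by the submodule-is-a-summand fact it is a direct summand of $\UO^{n-1}$; the induction hypothesis then applies to it and we are done.

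The only genuinely fiddly point is the orthogonalisation of idempotents behind the claim that a finitely generated left ideal is singly generated by an idempotent (or, on the flat-module route, the purity/splitting bookkeeping); everything else is a routine induction. Since the statement is classical — it is \cite[Theorem 1.11 and Proposition 2.6]{GoodearlvNr} — in the paper one may simply cite it, but the above is the argument I would reconstruct.
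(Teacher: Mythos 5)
The paper does not prove this proposition; it is simply cited from Goodearl's book, so there is no in-paper argument to compare against. Your reconstruction is correct and is essentially the standard argument that the citation points to: the short route is ``flat $+$ finitely presented $\Rightarrow$ projective'' combined with the fact that every module over a von Neumann regular ring is flat (which the paper also records), and the longer self-contained route goes through idempotent generation of finitely generated one-sided ideals and the usual coordinate-projection induction for the cyclic decomposition. Both are sound, and the decomposition step (projecting to the last coordinate, splitting off a cyclic projective $\UO e$, and recursing on the summand inside $\UO^{n-1}$) is exactly how one proves Goodearl's Theorem~1.11 and Proposition~2.6.

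One small slip worth correcting: for a \emph{left} ideal the idempotent is $yx$, not $xy$. From $xyx=x$ one gets that $yx$ is idempotent, that $yx\in\UO x$, and that $x=x(yx)\in\UO(yx)$, hence $\UO x=\UO(yx)$. The element $xy$ is also idempotent but generates the \emph{right} ideal $x\UO=(xy)\UO$. This is a transposition, not a conceptual error, and the rest of the argument (orthogonalising idempotents to handle several generators, then the induction on $n$) is fine; you are also implicitly using that a finitely generated submodule of $\UO^{n-1}$ is a direct summand of $\UO^{n-1}$, which is the same idempotent-plus-projection induction one level down, so the bookkeeping closes up.
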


    The von Neumann regular rings we shall be working with have an additional structure, they are $\ast$-regular rings. A {\it $\ast$-ring} is a ring $R$ with a map $\ast: R \rightarrow R$ that is an involution, i.e. $(x^{\ast})^{\ast} = x, (x+y)^{\ast} = x^{\ast} + y^{\ast}$ and $(xy)^{\ast} = y^{\ast} x^{\ast}$ for $x,y\in R$. The involution is called {\it proper} if $x^{\ast} x = 0$ implies $x = 0$. A {\it $\ast$-regular ring} is a von Neumann regular ring with a proper involution.

    If $\UO$ is a $\ast$-regular ring, then for each $x \in \UO$ there is a unique element $y \in \UO$, called the \textit{relative inverse of} $x$, such that $xyx = x$, $yxy = y$ and both $xy$ and $yx$ are projections, that is idempotents and self-adjoints; see \cite[Proposition 51.4(i)]{Berberian72}. For a $\ast$-subring $R$ of a $\ast$-regular ring $\mathcal{U}$, P. Ara and K. R. Goodearl realized that one can construct the smallest $\ast$-regular subring of $\mathcal{U}$ containing $R$, termed the \textit{$\ast$-regular closure} and denoted by $\mathcal{R}(R, \mathcal{U})$, in the same spirit of the division closure of a subring of a ring. The construction in \cite[Proposition 6.2]{AraGoodearlRegClosure} goes as follows:
    \begin{enumerate}[label=(\roman*)]
        \item Set $\mathcal{R}_0(R, \mathcal{U}) := R$, a $\ast$-subring of $\UO$.
        \item Suppose $n \geq 0$ and that we have constructed a $\ast$-subring $\mathcal{R}_n(R, \UO)$ of $\UO$. Let $\mathcal{R}_{n+1}(R, \UO)$ be the subring of $\UO$ generated by the elements of $\mathcal{R}_n(R, \UO)$ and their relative inverses in $\UO$, which is a $\ast$-subring.
    \end{enumerate}
    Then $\mathcal{R}(R, \UO) = \cup_{n = 0}^{\infty} \mathcal{R}_n(R, \UO)$.

    An \textit{$R$-ring} is a pair $(S, \varphi)$ where $\varphi : R \rightarrow S$ is a ring homomorphism. We will often omit $\varphi$ if it is clear from the context.

\subsection{Sylvester rank functions}\label{subsec:slyv}

    Let $R$ be a ring. A \textit{Sylvester matrix rank function} $\rk$ on $R$ is a function that assigns a non-negative real number to each matrix over $R$ and satisfies the following conditions:
    \begin{enumerate}[label=(\roman*)]
        \item $\rk(A)=0$ if $A$ is any zero matrix and $\rk(1)=1$;
        \item $\rk(AB)\leq \min\{\rk(A),\rk(B)\}$ for any matrices $A$ and $B$ which can be multiplied;
        \item $\rk\begin{psmallmatrix}
             A & 0 \\
             0 & B
        \end{psmallmatrix}=\rk(A)+\rk(B)$ for any matrices $A$ and $B$; and
        \item $\rk \begin{psmallmatrix}
             A & C \\
             0 & B
        \end{psmallmatrix} \geq \rk(A)+\rk(B)$ for any matrices $A$, $B$ and $C$ of appropriate sizes.
    \end{enumerate}
    
    We denote by $\mathbb{P}(R)$ the set of Sylvester matrix rank functions on $R$. Note that a ring homomorphism $\varphi\colon R\rightarrow S$ induces a map $\varphi^{\#}\colon \mathbb{P}(S)\rightarrow \mathbb{P}(R)$. Indeed, we can pull back any rank function $\rk$ on $S$ to a rank function $\varphi^{\#}(\rk)$ on $R$ by setting
    \[
    \varphi^{\#}(\rk)(A):=\rk(\varphi(A))
    \]
    for every matrix $A$ over $R$. We shall often abuse notation and write $\rk$ instead of $\varphi^{\#}(\rk)$ when it is clear that we are referring to the rank function on $R$. There is a dual notion of Sylvester module rank function. 
    
    \begin{defn}\label{def: SMRF}
        A {\it Sylvester module rank function}, or simply {\it $\SMRF$}, $\dim$ on $R$ is a map that assigns a non-negative real number to each finitely presented left $R$-module and satisfies the following properties:
    \begin{enumerate}[label=(\roman*)]
        \item $\dim(0)=0$ and $\dim(R)=1$;
        \item $\dim(M_1\oplus M_2)=\dim(M_1)+\dim(M_2)$; and
        \item if $M_1\rightarrow M_2\rightarrow M_3\rightarrow 0$ is exact, then
        \[
        \dim(M_1)+\dim(M_3)\geq \dim(M_2)\geq \dim(M_3). 
        \]
    \end{enumerate}
    \end{defn}
    
    A Sylvester module rank function is called \textit{exact} if $\dim(M_1) + \dim(M_3) = \dim(M_2)$ whenever $0 \rightarrow M_1 \rightarrow M_2 \rightarrow M_3 \rightarrow 0$ is exact. Note that by \cref{prop: vNr_proj_mod} this is always the case for von Neumann regular rings.

    As one might expect, there is a bijective correspondence between these two notions defined as follows. Given a Sylvester matrix rank function $\rk$ on $R$, we can define an $\SMRF$ $\dim$ on $R$ by assigning to any finitely presented left $R$-module with presentation $M=R^m/R^nA$ for some $A\in \Mat_{n\times m}(R)$, the value $\dim(M):=m-\rk(A)$; conversely, if $\dim$ is a $\SMRF$ on $R$, then we obtain a Sylvester matrix rank function setting $\rk(A):=m-\dim(R^m/R^nA)$ (cf. \cite[Theorem 4]{Malcolmson} and \cite[Proposition 1.2.8]{DLopezAlvarezThesis}).

\subsection{The von Neumann dimension and the \texorpdfstring{$\ast$}--regular closure of a group}\label{subsec: vNdim_reg_clos}

    A countable group $G$ acts by left and right multiplication on $\ell^2(G)$. We can extend the right action of $G$ on $\ell^2(G)$ to the group algebra $\C[G]$, and hence consider $\C[G]$ as a subalgebra of $\mathcal{B}(\ell^2(G))$, namely the {\it bounded linear operators on} $\ell^2(G)$.

    A finitely generated {\it Hilbert} $G$-module is a closed subspace $V\leq \ell^2(G)^n$ invariant under the left action of $G$. Given such $V$, there exists $\pr_V$, the orthogonal projection onto $V$. We set 
    \[
    \dim_G(V):=\trace_G(\pr_V):=\sum_{i=1}^n \langle \pr_V(1_i),1_i\rangle_,
    \]
    where $1_i$ is the element of $\ell^2(G)^n$ having $1$ in the $i$th entry and $0$ in the rest of the entries. We call this number the {\it von Neumann dimension} of $V$. This dimension induces a Sylvester matrix rank function on $\C[G]$. Concretely, given $A\in \Mat_{n\times m}(\C[G])$ we can then associate the natural bounded linear operator $\phi_G^A:\ell^2(G)^n\rightarrow\ell^2(G)^m$ given by right multiplication, and define $\rk_G(A)=\dim_G \overline{\im \phi_G^A}$ where the bar stands for the closure of the image subspace.

    The {\it group von Neumann algebra} $\VN(G)$ of $G$ is the algebra of left $G$-equivariant bounded operators on $\ell^2(G)$
    \[
    \VN(G):=\{\phi\in \mathcal{B}(\ell^2(G)): \phi(g\cdot v) = g \cdot \phi(v) \mbox{ for all }g\in G,v\in \ell^2(G)\}.
    \]
    The ring $\VN(G)$ is a finite von Neumann algebra. Moreover, $\VN(G)$ satisfies the left (and right) Ore condition with respect to the set of non-zero-divisors (a result proved by S. K. Berberian in \cite{Berberian82}). We denote by $\UO(G)$ the left (resp. right) classical ring of fractions and called it the {\it ring of unbounded operators affiliated to} $G$. The reader can consult the Ph.D thesis of H. Reich \cite{ReichThesis} for the following standard facts. The ring $\UO(G)$ can be defined alternatively as the set of densely defined (unbounded) operators which are closed, i.e. its graph is closed in $\ell^2(G)^2$, and commute with the left action of $G$. It is a $\ast$-regular ring where the involution is given by taking the adjoint operator. Besides, observe that every $x\in \ell^2(G)$ defines a map from $\C[G]$ to $\ell^2 (G)$ given by right multiplication. This map is densely defined, commutes with the left action, and moreover, it can be shown to extend to a closed left $G$-equivariant operator. Thus we have the inclusion $\ell^2(G) \subseteq \UO(G)$. Not only that, this embedding extends the embedding $\VN(G) \subseteq \UO(G)$, because the map induced by right multiplication by $\phi(e)$ coincides with $\phi$ in $\C[G]$, and hence $\phi$ is its unique closed extension in $\UO(G)$. To sum up, we have the following chain of $\ast$-embeddings
    \[
    \C[G] \subseteq \VN(G) \subseteq \ell^2(G) \subseteq \UO(G).
    \]
    There is another chain of $\ast$-embeddings produced by the $\ast$-regular closure. Specifically, since $\UO(G)$ is a $\ast$-regular ring, given $K \subseteq \C$ a field closed under complex conjugation, we set $\mathcal{R}_{\scalebox{0.8}{$K[G]$}}$ to be the $\ast$-regular closure of $K[G]$ in $\mathcal{U}(G)$, or in other words, $\mathcal{R}_{\scalebox{0.8}{$K[G]$}}:= \mathcal{R}(K[G], \UO(G))$. Then 
    \[
    K[G] \subseteq \mathcal{R}_{\scalebox{0.8}{$K[G]$}} \subseteq \UO(G)
    \]
    is another such chain. 
    
    The Sylvester matrix rank function $\rk_G$ can be extended to $\UO(G)$ as follows
    \[
    \rk_G(s^{-1}r):=\rk_G(r)=\dim_G \left( \overline{r \ell^2(G)} \right).
    \]
    This Sylvester matrix rank function is indeed the one associated to the $L^2$-dimension $\dim_{\scalebox{0.8}{$\UO(G)$}}$ \cite[Definition 8.28]{Luck02} (cf. \cite[Lemma 4.2.4]{DLopezAlvarezThesis}). We record a well-known result that can be found in \cite[Proposition 4.2.2]{DLopezAlvarezThesis}.

    \begin{prop}\label{prop: preserve_dim_gps}
        For any subgroup $H$ of $G$ it holds that
        \[
        \dim_{\scalebox{0.8}{$\UO(H)$}}(M)=\dim_{\scalebox{0.8}{$\UO(G)$}}(\UO(G)\otimes_{\scalebox{0.8}{$\UO(H)$}}M)
        \]
        for every left $\UO(H)$-module $M$.
    \end{prop}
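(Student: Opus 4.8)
The plan is to reduce the asserted identity, via continuity of the two dimension functions involved, to the case of a finitely presented module, then to reformulate it as an equality of Sylvester \emph{matrix} rank functions, bring the matrices down to having entries in $\VN(H)$, and finally verify that case directly using the block decomposition of $\ell^2(G)$ along the left cosets of $H$ together with the trace formula for $\dim_G$. Throughout I would use the natural embedding $\UO(H)\subseteq\UO(G)$, which is available because a non-zero-divisor of $\VN(H)$ remains injective, hence a non-zero-divisor, on $\ell^2(G)$.

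First I would dispose of the reduction to finitely presented modules. Over the von Neumann regular ring $\UO(H)$, a finitely generated submodule of a finitely generated free module is finitely presented: the quotient is finitely presented, hence projective by \cref{prop: vNr_proj_mod}, so the inclusion splits. Since Lück's dimension $\dim_{\UO(H)}$ is continuous and cofinal, it is therefore determined by the Sylvester module rank function it induces on finitely presented modules. The assignment $M\mapsto\dim_{\UO(G)}(\UO(G)\otimes_{\UO(H)}M)$ enjoys the same properties: $\UO(G)$ is flat over $\UO(H)$ (again by von Neumann regularity), so $\UO(G)\otimes_{\UO(H)}-$ is exact and commutes with directed unions and with inclusions, and $\dim_{\UO(G)}$ is itself continuous and cofinal. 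Hence it suffices to treat finitely presented $M$.

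Next I would pass to matrices. Writing $M=\UO(H)^m/\UO(H)^nA$ with $A\in\Mat_{n\times m}(\UO(H))$, right exactness of the tensor product gives $\UO(G)\otimes_{\UO(H)}M\cong\UO(G)^m/\UO(G)^nA$, so by the matrix--module correspondence of \cref{subsec:slyv} the goal becomes $\rk_{\UO(H)}(A)=\rk_{\UO(G)}(A)$, where on the right $A$ is read over $\UO(G)$. By the left Ore condition of $\VN(H)$ one can pick a non-zero-divisor $s\in\VN(H)$ with $B:=sA\in\Mat_{n\times m}(\VN(H))$, so $A=(sI_n)^{-1}B$, and since $sI_n$ is invertible both in $\Mat_n(\UO(H))$ and in $\Mat_n(\UO(G))$, property (ii) of Sylvester matrix rank functions gives $\rk_{\UO(H)}(A)=\rk_{\UO(H)}(B)$ and $\rk_{\UO(G)}(A)=\rk_{\UO(G)}(B)$. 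We are reduced to $\rk_{\UO(H)}(B)=\rk_{\UO(G)}(B)$ for $B\in\Mat_{n\times m}(\VN(H))$. To see this, fix a transversal $T$ of $G/H$ with $e\in T$ and write $\ell^2(G)=\bigoplus_{t\in T}\ell^2(tH)$ as a Hilbert direct sum, each $U_t\colon\ell^2(H)\to\ell^2(tH)$, $\delta_h\mapsto\delta_{th}$, being an isomorphism of right $\VN(H)$-modules. The right multiplication operator $\phi_G^B$ preserves each block and, transported through $U_t$ applied coordinatewise, restricts there to $\phi_H^B$; since $\phi_G^B$ also commutes with the left $G$-action, $V:=\overline{\im\phi_G^B}$ is a Hilbert $G$-submodule of $\ell^2(G)^m$ equal to the Hilbert sum of the subspaces $U_t(\overline{\im\phi_H^B})$. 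Feeding this into $\dim_G(V)=\sum_{i=1}^m\langle\pr_V 1_i,1_i\rangle$ and observing that each $1_i$ lies in the block $\ell^2(eH)^m=\ell^2(H)^m$ and is orthogonal to the remaining blocks, one gets $\pr_V 1_i=\pr_{\overline{\im\phi_H^B}}1_i$; summing over $i$ yields $\rk_{\UO(G)}(B)=\dim_G(V)=\dim_H(\overline{\im\phi_H^B})=\rk_{\UO(H)}(B)$.

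The hard part will be the last step, namely keeping the two commuting module structures on $\ell^2(G)$ straight: one must decompose along the \emph{left} cosets of $H$, which is the decomposition compatible with right multiplication by $\VN(H)$, while at the same time $V$ must stay invariant under the \emph{left} $G$-action so that $\dim_G(V)$ is defined, and one then has to check that the von Neumann trace is concentrated on the block indexed by the trivial coset. The remaining steps are routine given the continuity of $\dim_{\UO(H)}$ and $\dim_{\UO(G)}$ and the Ore/rank formalism.
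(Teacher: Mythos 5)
Your proof is correct. The paper does not actually spell out a proof of this proposition; it simply cites \cite[Proposition~4.2.2]{DLopezAlvarezThesis}, and your argument is the standard one that the citation refers to: reduce by additivity/cofinality and flatness to finitely presented modules, translate to an equality of Sylvester matrix rank functions $\rk_{\UO(H)}(A)=\rk_{\UO(G)}(A)$, clear denominators using the Ore property of $\VN(H)$ to assume $A$ has entries in $\VN(H)$, and then use the Hilbert decomposition $\ell^2(G)=\bigoplus_{t\in T}\ell^2(tH)$ along a left transversal to see that the closed image of right multiplication by $A$ is the Hilbert sum of conjugates of $\overline{\im\phi_H^A}$, one per coset, and that the von Neumann trace of the orthogonal projection onto it is concentrated in the trivial-coset block. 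All the small lemmas you invoke (non-zero-divisors of $\VN(H)$ remain weak isomorphisms on $\ell^2(G)$; invertible left factors preserve rank; closures commute with orthogonal sums of unitary images; $1_i\in\ell^2(eH)^m$ is orthogonal to the other blocks) are correct and are exactly the points that need checking.
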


    The inclusion $\mathcal{R}_{\scalebox{0.8}{$K[G]$}} \subseteq \UO(G)$ induces an $\SMRF$ that is faithful over projective modules.

    \begin{prop}\label{prop: dimG_proj_faithful}
        Let $P$ be a finitely generated projective left $\mathcal{R}_{\scalebox{0.8}{$K[G]$}}$-module such that $\dim_{\scalebox{0.8}{$\UO(G)$}} \UO(G) \otimes_{\scalebox{0.8}{$\mathcal{R}_{\scalebox{0.8}{$K[G]$}}$}} P$ is zero. Then $P = \{0\}$.
    \end{prop}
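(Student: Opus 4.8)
The plan is to reduce the statement to faithfulness of the von Neumann trace $\trace_G$ on the finite von Neumann algebra $\VN(G)$, via the structure of finitely generated projective modules over the von Neumann regular rings $\mathcal{R}_{K[G]}$ and $\UO(G)$.

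First I would use that $\mathcal{R}_{K[G]}$ is von Neumann regular (it is a $\ast$-regular closure) and that $P$, being finitely generated projective, is finitely presented; by \cref{prop: vNr_proj_mod} I may then write $P\cong\bigoplus_{i=1}^{n}\mathcal{R}_{K[G]}\varepsilon_i$ with each $\varepsilon_i\in\mathcal{R}_{K[G]}$ idempotent. Each summand $\mathcal{R}_{K[G]}\varepsilon_i$ is a direct summand of the free module $\mathcal{R}_{K[G]}$, so applying $\UO(G)\otimes_{\mathcal{R}_{K[G]}}-$ (a split exact sequence, so no flatness issue arises) gives
\[
\UO(G)\otimes_{\mathcal{R}_{K[G]}}P\;\cong\;\bigoplus_{i=1}^{n}\UO(G)\varepsilon_i .
\]
Since $\dim_{\UO(G)}$ is additive and non-negative, the hypothesis forces $\dim_{\UO(G)}(\UO(G)\varepsilon_i)=0$ for every $i$, and it then suffices to deduce $\varepsilon_i=0$ for each $i$, for then $P\cong\bigoplus_i\mathcal{R}_{K[G]}\varepsilon_i=0$.

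Next I would pass from the idempotent $\varepsilon_i$ to a projection: since $\UO(G)$ is $\ast$-regular, the principal left ideal $\UO(G)\varepsilon_i$ equals $\UO(G)p_i$ for a projection $p_i=p_i^2=p_i^{\ast}$ (this follows from the relative inverse construction recalled earlier in this section; cf.\ \cite[Proposition 51.4]{Berberian72}). A projection is a bounded, everywhere defined, $G$-equivariant operator, so $p_i\in\VN(G)$ and $p_i$ is the orthogonal projection onto its own range. Unwinding the correspondence between the matrix rank function $\rk_G$ and $\dim_{\UO(G)}$ (from \cref{subsec:slyv}), and using $\UO(G)\varepsilon_i=\UO(G)p_i\cong\UO(G)/\UO(G)(1-p_i)$, one computes
\[
\dim_{\UO(G)}(\UO(G)\varepsilon_i)=1-\rk_G(1-p_i)=1-\dim_G\bigl(\overline{(1-p_i)\ell^2(G)}\bigr)=1-\trace_G(1-p_i)=\trace_G(p_i).
\]
Hence $\trace_G(p_i)=0$, and faithfulness of $\trace_G$ on $\VN(G)$ yields $p_i=0$; therefore $\UO(G)\varepsilon_i=\UO(G)p_i=0$, so $\varepsilon_i=1\cdot\varepsilon_i=0$, as required.

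I do not expect a genuine obstacle: the statement is an assembly of standard facts. The step needing the most care is the middle one — identifying $\dim_{\UO(G)}$ of the cyclic module $\UO(G)p_i$ with $\trace_G(p_i)$ through the Sylvester matrix/module dictionary — together with making sure the base change is evaluated via the splitting rather than invoking flatness. An essentially equivalent route avoids the explicit projections: $\dim_{\UO(G)}$ is a faithful (exact) Sylvester module rank function on finitely presented $\UO(G)$-modules, a consequence of faithfulness of $\trace_G$, so $\dim_{\UO(G)}(\UO(G)\otimes_{\mathcal{R}_{K[G]}}P)=0$ gives $\UO(G)\otimes_{\mathcal{R}_{K[G]}}P=0$ at once; writing $P$ as the image of an idempotent matrix $E$ over $\mathcal{R}_{K[G]}$, this says $E=0$ in $\Mat_n(\UO(G))$, hence $E=0$ in $\Mat_n(\mathcal{R}_{K[G]})$ since $\mathcal{R}_{K[G]}\hookrightarrow\UO(G)$, and therefore $P=0$.
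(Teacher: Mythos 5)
Your argument is correct and follows essentially the same route as the paper: decompose $P$ into cyclic summands $\mathcal{R}_{\scalebox{0.8}{$K[G]$}}\varepsilon_i$ via \cref{prop: vNr_proj_mod}, base change to $\UO(G)$, use additivity of $\dim_{\scalebox{0.8}{$\UO(G)$}}$ to reduce to a single cyclic, and conclude from faithfulness of the rank function. The paper uses flatness of $\UO(G)$ as a right $\mathcal{R}_{\scalebox{0.8}{$K[G]$}}$-module to preserve the inclusions $\mathcal{R}_{\scalebox{0.8}{$K[G]$}}x_i\hookrightarrow\mathcal{R}_{\scalebox{0.8}{$K[G]$}}$, whereas you observe (equally validly) that these are split and so no flatness is needed; and where the paper simply cites that $\rk_G$ is faithful on $\UO(G)$ (López-Álvarez's thesis), you unwind that faithfulness explicitly through projections in $\VN(G)$ and $\trace_G$, implicitly using that projections in $\UO(G)$ are bounded and hence lie in $\VN(G)$. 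Your closing ``alternative'' is precisely the paper's condensed version. Both are correct; the extra projection step in your main argument is harmless but not needed once one is willing to quote faithfulness of $\rk_G$ directly.
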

    \begin{proof}
        Since $P$ must be finitely presented, we deduce from \cref{prop: vNr_proj_mod} that there exist $x_1, \ldots , x_n \in \mathcal{R}_{\scalebox{0.8}{$K[G]$}}$ such that $P \cong \mathcal{R}_{\scalebox{0.8}{$K[G]$}}x_1 \oplus \ldots \oplus \mathcal{R}_{\scalebox{0.8}{$K[G]$}}x_n$. Recall that every right $\mathcal{R}_{\scalebox{0.8}{$K[G]$}}$-module is flat being $\mathcal{R}_{\scalebox{0.8}{$K[G]$}}$ von Neumann regular. So the map $\UO(G) \otimes_{\scalebox{0.8}{$\mathcal{R}_{\scalebox{0.8}{$K[G]$}}$}} \mathcal{R}_{\scalebox{0.8}{$K[G]$}}x_{i} \rightarrow \UO(G)$ induced from the injective map $\mathcal{R}_{\scalebox{0.8}{$K[G]$}}x_{i} \hookrightarrow \mathcal{R}_{\scalebox{0.8}{$K[G]$}}$ is again injective. Thus for each $i \in \{1, \ldots , n\}$ it holds that 
        \[
        0 = \dim_{\scalebox{0.8}{$\UO(G)$}} \UO(G) \otimes_{\scalebox{0.8}{$\mathcal{R}_{\scalebox{0.8}{$K[G]$}}$}} \mathcal{R}_{\scalebox{0.8}{$K[G]$}}x_{i} = \dim_{\scalebox{0.8}{$\UO(G)$}} \UO(G)x_i = \rk_G(x_i).
        \]
        However, $\rk_G$ is faithful over $\UO(G)$; see, for instance, \cite[Proposition 4.2.2]{DLopezAlvarezThesis}. So $x_i = 0$ for $i = 1, \ldots, n$, and hence $P = \{0\}$.
    \end{proof}

    Let $f:R\rightarrow S$ be a ring homomorphism. We say that $f$ is {\it epic} if for every ring $T$ and every pair of homomorphisms $\alpha, \beta:S\rightarrow T$, the equality $\alpha \circ f = \beta \circ f$ implies $\alpha = \beta$. Our interest in $\mathcal{R}_{\scalebox{0.8}{$K[G]$}}$ rests on the fact that the inclusion $K[G] \subseteq \mathcal{R}_{\scalebox{0.8}{$K[G]$}}$ is epic according to \cite[Proposition 6.1]{JaikinBaseChange}. In a sense, the $\ast$-regular ring $\mathcal{R}_{\scalebox{0.8}{$K[G]$}}$ is not too big for $K[G]$, unlike $\UO(G)$. This property will be crucial for us due to the following characterization of epic homomorphisms.
    
    \begin{prop}{\cite[Proposition XI.1.2]{Stenstrom_RingsofQuot}}\label{prop: epic_tensor_prod}
        Let $f:R\rightarrow S$ be a ring homomorphism. Then $f$ is epic if and only if the multiplicative map
        \[
        m:S\otimes_R S\rightarrow S
        \]
        is an isomorphism of $S$-bimodules.
    \end{prop}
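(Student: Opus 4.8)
\emph{The plan} is to derive the equivalence from the intermediate statement that $f$ is epic if and only if the two additive maps $i_1, i_2 \colon S \to S\otimes_R S$, given by $i_1(s)=s\otimes1$ and $i_2(s)=1\otimes s$, coincide. I would first record that $m$ is automatically a surjective homomorphism of $S$-bimodules: it is onto since $m(s\otimes1)=s$, and $m(s\xi s')=s\,m(\xi)\,s'$ holds on elementary tensors, hence in general. So $m$ is an isomorphism precisely when it is injective. Injectivity of $m$ gives $i_1=i_2$ because $m(s\otimes1-1\otimes s)=0$; conversely, if $i_1=i_2$ then any $\xi=\sum_k s_k\otimes s_k'$ satisfies
\[
\xi=\sum_k s_k\cdot(1\otimes s_k')=\sum_k s_k\cdot(s_k'\otimes1)=\Bigl(\sum_k s_k s_k'\Bigr)\otimes1=m(\xi)\otimes1,
\]
so $m(\xi)=0$ forces $\xi=0$, i.e. $m$ is injective and thus an isomorphism of $S$-bimodules. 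It therefore remains to prove that $f$ is epic if and only if $i_1=i_2$.

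For the forward direction, the idea is to build a test ring that detects the element $s\otimes1-1\otimes s$. I would take the square-zero extension $E:=S\ltimes(S\otimes_R S)$, the ring with underlying abelian group $S\oplus(S\otimes_R S)$, product $(s,\xi)(s',\xi')=(ss',\,s\xi'+\xi s')$ and unit $(1,0)$; associativity and unitality are immediate from the $S$-bimodule axioms. Define $\alpha,\beta\colon S\to E$ by $\alpha(s)=(s,0)$ and $\beta(s)=(s,\,s\otimes1-1\otimes s)$. Both are additive, $\alpha$ is clearly a ring homomorphism, and $\beta$ is one because the bimodule component of $\beta(s)\beta(s')$ telescopes (the cross terms $-s\otimes s'$ and $s\otimes s'$ cancel) to $(ss')\otimes1-1\otimes(ss')$, matching $\beta(ss')$, with $\beta(1)=(1,0)$. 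Since $f(r)\otimes1=1\otimes f(r)$ in $S\otimes_R S$ for all $r\in R$, we have $\alpha\circ f=\beta\circ f$, so epicness of $f$ forces $\alpha=\beta$, whence $s\otimes1=1\otimes s$ for every $s\in S$.

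For the converse, let $\alpha,\beta\colon S\to T$ be ring homomorphisms with $\alpha\circ f=\beta\circ f$. The assignment $(s,s')\mapsto\alpha(s)\beta(s')$ is biadditive and $R$-balanced, since $\alpha(sf(r))\beta(s')=\alpha(s)\bigl(\alpha f(r)\bigr)\beta(s')=\alpha(s)\bigl(\beta f(r)\bigr)\beta(s')=\alpha(s)\beta(f(r)s')$, so it induces an additive map $\mu\colon S\otimes_R S\to T$ with $\mu(s\otimes1)=\alpha(s)$ and $\mu(1\otimes s)=\beta(s)$; the hypothesis $i_1=i_2$ then gives $\alpha=\beta$, so $f$ is epic. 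Combined with the reduction above, this proves the proposition. The only non-routine point is the construction of the ring $E$: epicness is a condition ranging over all target rings, and the square-zero extension of $S$ by the bimodule $S\otimes_R S$ is exactly the gadget that makes the obstruction $s\otimes1-1\otimes s$ detectable; everything else is verification of ring and balancing axioms.
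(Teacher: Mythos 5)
Your proof is correct and complete: the reduction of both conditions to the identity $s\otimes 1=1\otimes s$, the square-zero extension $S\ltimes(S\otimes_R S)$ to detect the obstruction, and the balanced map $(s,s')\mapsto\alpha(s)\beta(s')$ for the converse all check out. The paper does not prove this proposition but merely cites Stenstr\"om, and your argument is essentially the standard one given there, so there is nothing to reconcile.
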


\subsection{The strong and algebraic Atiyah conjectures}

    For a group $G$, we set $\lcm(G)$ as the least common multiple of the set $\{|F|: F \leqslant G, |F| < \infty \}$ if it is finite, and $\infty$ otherwise.

    \begin{defn}
        Let $G$ be a group with finite $\lcm(G)$ and $K\subseteq \C$ a field. We say that the {\it strong Atiyah conjecture for $G$ holds over $K$} if every finitely presented left $K[G]$-module $M$ satisfies
        \[
        \dim_{\scalebox{0.8}{$\UO(G)$}}(\UO(G) \otimes_{\scalebox{0.8}{$K[G]$}} M) \in \frac{1}{\lcm(G)}\Z.
        \]
    \end{defn}

    Whenever the subfield $K$ is closed under complex conjugation, there is a reformulation of the strong Atiyah conjecture, given in \cite[Proposition 2.4.10]{FabianHennekeThesis}, that is more convenient for us.

    \begin{prop}\label{prop: Atiyah_conj_reg_closure}
        Let $G$ be a group with fintie $\lcm(G)$ and $K\subseteq \C$ a field closed under complex conjugation. Then the strong Atiyah conjecture for $G$ holds over $K$ if and only if every (arbitrary) left $\mathcal{R}_{\scalebox{0.8}{$K[G]$}}$-module $M$ satisfies
        \[
        \dim_{\scalebox{0.8}{$\UO(G)$}}(\UO(G)\otimes_{\scalebox{0.8}{$\mathcal{R}_{\scalebox{0.8}{$K[G]$}}$}} M) \in \frac{1}{\lcm(G)}\Z \cup \{\infty\}.
        \]
    \end{prop}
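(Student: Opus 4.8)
The plan is to prove the two implications separately; the reverse one is immediate, and the forward one carries all the weight. Throughout write $d:=\lcm(G)$ and $\mathcal{R}:=\mathcal{R}_{\scalebox{0.8}{$K[G]$}}$, which is a well-defined $\ast$-subring of $\UO(G)$ precisely because $K$ is closed under complex conjugation. For the direction $(\Leftarrow)$, given a finitely presented left $K[G]$-module $N$ I would apply the hypothesis to the left $\mathcal{R}$-module $M:=\mathcal{R}\otimes_{K[G]}N$: by associativity of tensor products $\UO(G)\otimes_{K[G]}N\cong\UO(G)\otimes_{\mathcal{R}}M$, and since this module is finitely presented over $\UO(G)$ its $L^2$-dimension is finite, hence lies in $\frac1d\Z$ by hypothesis. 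This is exactly the strong Atiyah conjecture for $G$ over $K$.

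For the direction $(\Rightarrow)$, assume the strong Atiyah conjecture holds over $K$ and let $M$ be an arbitrary left $\mathcal{R}$-module. The first step is a chain of standard reductions. Writing $M$ as the directed union of its finitely generated submodules, using that $\UO(G)$ is flat over the von Neumann regular ring $\mathcal{R}$, and invoking the continuity of the $L^2$-dimension along directed unions \cite{Luck02, ReichThesis}, one reduces to $M$ finitely generated (a supremum of elements of the closed discrete set $\frac1d\Z_{\ge0}$ is either in $\frac1d\Z$ or $\infty$). For $M$ finitely generated, choose a presentation $0\to L\to\mathcal{R}^n\to M\to 0$; flatness gives $\dim_{\UO(G)}(\UO(G)\otimes_{\mathcal{R}}M)=n-\dim_{\UO(G)}(\UO(G)\otimes_{\mathcal{R}}L)$, and writing $L$ as the directed union of its finitely generated submodules $L_k$ — each a direct summand of $\mathcal{R}^n$ by von Neumann regularity \cite{GoodearlvNr}, so that $\mathcal{R}^n/L_k$ is finitely presented — one reduces (continuity again) to proving that every finitely presented left $\mathcal{R}$-module $P$ has $\dim_{\UO(G)}(\UO(G)\otimes_{\mathcal{R}}P)\in\frac1d\Z$. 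By \cref{prop: vNr_proj_mod} such a $P$ is projective and splits into cyclic projectives $\mathcal{R}e$ with $e=e^2\in\mathcal{R}$, so it suffices to prove $\rk_G(e)=\dim_{\UO(G)}(\UO(G)e)\in\frac1d\Z$ for every idempotent $e\in\mathcal{R}$ — equivalently, for every idempotent matrix over $\mathcal{R}$.

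To establish this I would induct along the Ara--Goodearl filtration $\mathcal{R}=\bigcup_{N\ge0}\mathcal{R}_N(K[G],\UO(G))$, with inductive claim: the set of values of $\rk_G$ on idempotent matrices with entries in $\mathcal{R}_N$ lies in the additive subgroup of $\R$ generated by the corresponding set for $\mathcal{R}_{N-1}$ (base: $\mathcal{R}_0=K[G]$). The base case is precisely the strong Atiyah conjecture, since for an idempotent matrix $E$ over $K[G]$ one has $K[G]^mE\cong\coker(1-E)$, a finitely presented $K[G]$-module with $\rk_G(E)=\dim_{\UO(G)}(\UO(G)\otimes_{K[G]}\coker(1-E))\in\frac1d\Z$. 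For the inductive step one passes from $\mathcal{R}_N$ to $\mathcal{R}_{N+1}$ by adjoining relative inverses, handled one at a time (with a directed-union/continuity argument for the infinitely many of them): if $y\in\UO(G)$ is the relative inverse of $x\in\mathcal{R}_N$, then $xy$ and $yx$ are projections and $x=xyx$, $y=yxy$, so $y$ restricts to the inverse of $x$ between the projective modules cut out by these projections, and one would use a block-matrix (Schur-complement) manipulation to rewrite an idempotent matrix over $\langle\mathcal{R}_N,y\rangle$ in terms of matrices over $\mathcal{R}_N$ without changing $\rk_G$ modulo the subgroup built at the previous stage. Since $\frac1d\Z$ is an additive subgroup of $\R$, the induction then closes and every idempotent of $\mathcal{R}$ has $\rk_G$-value in $\frac1d\Z$.

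The hard part is precisely this inductive step: making rigorous that adjoining relative inverses does not enlarge, modulo $\frac1d\Z$, the set of $L^2$-dimension values — in essence, controlling the Sylvester rank function induced on $\mathcal{R}$ through the $\ast$-regular closure construction by the one on $K[G]$, where the subtlety is in how the involution and higher powers of the adjoined relative inverses interact. The epicness of the inclusion $K[G]\hookrightarrow\mathcal{R}$ (\cref{prop: epic_tensor_prod}) is the structural feature that makes such a comparison viable. An alternative packaging of the same difficulty is to first deduce from the strong Atiyah conjecture, via known structural results (Linnell's criterion), that $\mathcal{R}$ is semisimple, write an arbitrary module as a possibly infinite direct sum of simple $\mathcal{R}$-modules $S_i$, and then prove $\dim_{\UO(G)}(\UO(G)\otimes_{\mathcal{R}}S_i)\in\frac1d\Z$ for each $i$; this last point again reduces to comparing the $S_i$ with finitely presented $K[G]$-modules, which is where the real work sits.
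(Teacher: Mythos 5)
The paper does not supply its own proof of this proposition (it is cited verbatim from \cite[Proposition 2.4.10]{FabianHennekeThesis}), so the proposal can only be judged on its own terms. Your $(\Leftarrow)$ argument is correct, and the chain of reductions in $(\Rightarrow)$ — from arbitrary $M$ to finitely generated via directed unions, flatness of $\UO(G)$ over $\mathcal{R}$ and continuity of $\dim_{\UO(G)}$; then via a presentation $0 \to L \to \mathcal{R}^n \to M \to 0$ and the fact that finitely generated submodules of $\mathcal{R}^n$ are direct summands to finitely generated projectives; then via \cref{prop: vNr_proj_mod} to a single idempotent $e \in \mathcal{R}$ — is all correct and standard. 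So the proposition does reduce to showing $\rk_G(e) \in \frac{1}{d}\Z$ for idempotents $e$ of $\mathcal{R}$.

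The genuine gap, which you flag yourself, is the inductive step on the Ara--Goodearl filtration. You propose a block-matrix/Schur-complement manipulation to control $\rk_G$ over $\mathcal{R}_{N+1}$ by ranks over $\mathcal{R}_N$, but no such manipulation is exhibited, and I do not believe it works in the form suggested. The obstruction is that if $y$ is the relative inverse of $x \in \mathcal{R}_N$, the projections $p = xy$, $q = yx$ live in $\mathcal{R}_{N+1}$ and typically not in $\mathcal{R}_N$; while $\rk_G(x)=\rk_G(y)=\rk_G(p)=\rk_G(q)$, a general matrix over the subring generated by $\mathcal{R}_N$ and $y$ has entries involving mixed words such as $y r y'$ with $r \in \mathcal{R}_N$, and there is no evident identity expressing its rank through ranks of matrices over $\mathcal{R}_N$ alone. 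Your alternative packaging (deduce semisimplicity of $\mathcal{R}$, then handle simple modules) does not escape this: semisimplicity alone does not tell you that the ranks of minimal idempotents lie in $\frac1d\Z$, which is precisely the point. The standard way this lemma is actually proved (Schick, and Henneke's thesis following him) is not via the $\ast$-regular closure recursion at all but via the rational (division) closure: under the strong Atiyah hypothesis the division closure $\mathcal{D}$ of $K[G]$ in $\UO(G)$ is von Neumann regular, and since $K$ is closed under conjugation $\mathcal{D}$ is a $\ast$-subring, hence $\ast$-regular, hence coincides with $\mathcal{R}_{K[G]}$; the rational-closure description together with Cramer's-rule/Malcolmson-type manipulations then expresses $\rk_G$ of any matrix over $\mathcal{D}$ in terms of ranks of matrices over $K[G]$, landing in $\frac1d\Z$. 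In short: the skeleton of the argument is right and the reductions are correct, but the decisive rank-control lemma is left unproved, and the filtration you chose is not the one that makes it tractable.
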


    Recall that a ring $R$ is called {\it semisimple} if every $R$-module is projective. It is not necessary to distinguish between left or right $R$-modules because the Artin--Wedderburn theorem states that $R$ is isomorphic to a finite direct product of full matrix rings over division rings. We record a structural consequence of the strong Atiyah conjecture proved in \cite[Proposition 2.4.6]{FabianHennekeThesis}.

    \begin{prop}\label{prop: Atiyah_implies_semisimple}
        Let $G$ be a group with finite $\lcm(G)$ and $K \subseteq \C$ a field closed under complex conjugation. If the strong Atiyah conjecture for $G$ holds over $K$, then the ring $\mathcal{R}_{\scalebox{0.8}{$K[G]$}}$ is semisimple.
    \end{prop}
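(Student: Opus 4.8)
The plan is to combine the fact that $\mathcal{R}_{\scalebox{0.8}{$K[G]$}}$ is von Neumann regular (being a $\ast$-regular closure) with the reformulation of the strong Atiyah conjecture in \cref{prop: Atiyah_conj_reg_closure}. I would use the standard fact that a von Neumann regular ring is semisimple if and only if it is left Noetherian: in a von Neumann regular ring every finitely generated left ideal is generated by an idempotent and hence is a direct summand (as recorded in \cref{prop: vNr_proj_mod}), so if the ring is left Noetherian then \emph{every} left ideal is a direct summand, which is one of the characterizations of a semisimple ring. It therefore suffices to show that $\mathcal{R}_{\scalebox{0.8}{$K[G]$}}$ contains no infinite independent family of nonzero left ideals. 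Suppose it did; then $\mathcal{R}_{\scalebox{0.8}{$K[G]$}}$ fails to be left Noetherian, so I could choose a non-finitely-generated left ideal, exhaust it by a strictly ascending chain $I_1 \subsetneq I_2 \subsetneq \cdots$ of finitely generated left ideals, and use that each $I_k$ is a direct summand to write $I_{k+1} = I_k \oplus L_k$ with $L_k = \mathcal{R}_{\scalebox{0.8}{$K[G]$}} e_k$ a nonzero left ideal generated by an idempotent $e_k \neq 0$; the family $\{L_k\}_k$ is then independent inside $\mathcal{R}_{\scalebox{0.8}{$K[G]$}}$.

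Next I would transport this configuration to $\UO(G)$. Since $\mathcal{R}_{\scalebox{0.8}{$K[G]$}}$ is von Neumann regular, every right $\mathcal{R}_{\scalebox{0.8}{$K[G]$}}$-module is flat, so $\UO(G) \otimes_{\scalebox{0.8}{$\mathcal{R}_{\scalebox{0.8}{$K[G]$}}$}} -$ is exact and carries the inclusion $\bigoplus_{k} \mathcal{R}_{\scalebox{0.8}{$K[G]$}}e_k \hookrightarrow \mathcal{R}_{\scalebox{0.8}{$K[G]$}}$ to an inclusion $\bigoplus_k \UO(G)e_k \hookrightarrow \UO(G)$ of $\UO(G)$-modules, each $\UO(G)e_k$ being a nonzero finitely generated projective $\UO(G)$-module. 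Applying \cref{prop: Atiyah_conj_reg_closure} to the $\mathcal{R}_{\scalebox{0.8}{$K[G]$}}$-module $\mathcal{R}_{\scalebox{0.8}{$K[G]$}}e_k$ gives $\dim_{\scalebox{0.8}{$\UO(G)$}}(\UO(G)e_k) \in \frac{1}{\lcm(G)}\Z \cup \{\infty\}$; this value is finite because $\UO(G)e_k$ embeds into $\UO(G)$, and it is strictly positive by \cref{prop: dimG_proj_faithful} since $\UO(G)e_k$ is a nonzero finitely generated projective module. Hence $\dim_{\scalebox{0.8}{$\UO(G)$}}(\UO(G)e_k) \geq \frac{1}{\lcm(G)}$ for every $k$.

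Finally I would derive a contradiction by a dimension count. For each $N$, the finitely generated submodule $\bigoplus_{k=1}^N \UO(G)e_k$ of $\UO(G)$ is again a direct summand (because $\UO(G)$ is von Neumann regular, so finitely generated submodules of projective modules are summands), so $\UO(G) \cong \big(\bigoplus_{k=1}^N \UO(G)e_k\big) \oplus Q_N$ for some $Q_N$, and additivity of the $L^2$-dimension yields
\[
1 = \dim_{\scalebox{0.8}{$\UO(G)$}}\UO(G) = \sum_{k=1}^N \dim_{\scalebox{0.8}{$\UO(G)$}}(\UO(G)e_k) + \dim_{\scalebox{0.8}{$\UO(G)$}}Q_N \geq \frac{N}{\lcm(G)}.
\]
Choosing $N > \lcm(G)$ gives a contradiction, so no such infinite family exists, $\mathcal{R}_{\scalebox{0.8}{$K[G]$}}$ is left Noetherian, and hence semisimple; Artin--Wedderburn then identifies it with a finite product of matrix rings over division rings.

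I do not expect a deep obstacle here; the points that need care are essentially organizational. First, one must cleanly record the classical structure theory of von Neumann regular rings — that such a ring is semisimple precisely when it is Noetherian, and that failure of the Noetherian condition produces an infinite independent family of nonzero idempotent-generated left ideals. Second, one must be sure that \cref{prop: Atiyah_conj_reg_closure} is legitimately applied to the \emph{arbitrary} $\mathcal{R}_{\scalebox{0.8}{$K[G]$}}$-modules $\mathcal{R}_{\scalebox{0.8}{$K[G]$}}e_k$ — this is exactly where the hypothesis that $K$ is closed under complex conjugation enters — and that the faithfulness of the $L^2$-dimension on finitely generated projectives from \cref{prop: dimG_proj_faithful} is what keeps each dimension bounded away from zero, which is the mechanism driving the contradiction.
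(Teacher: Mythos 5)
Your proof is correct, and since the paper simply cites \cite[Proposition 2.4.6]{FabianHennekeThesis} for this statement rather than reproducing an argument, there is no in-paper proof to compare against. Your route is the standard one: the strong Atiyah conjecture (via \cref{prop: Atiyah_conj_reg_closure}) together with the faithfulness of $\dim_{\scalebox{0.8}{$\UO(G)$}}$ on projectives forces every nonzero finitely generated projective left $\mathcal{R}_{\scalebox{0.8}{$K[G]$}}$-module to have $L^2$-dimension at least $\frac{1}{\lcm(G)}$, which bounds the Goldie dimension of the von Neumann regular ring $\mathcal{R}_{\scalebox{0.8}{$K[G]$}}$, whence it is Noetherian and therefore semisimple. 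The reduction from ``not Noetherian'' to an infinite independent family $\{L_k\}$ of nonzero idempotent-generated left ideals is carried out correctly (each $I_k$ is a direct summand because it is finitely generated in a von Neumann regular ring, $I_{k+1}/I_k$ is finitely presented hence projective, so the short exact sequence splits, and $L_j \subseteq I_k$ for $j<k$ makes the sum direct), and the transport to $\UO(G)$ via flatness is fine. Two small points of bookkeeping you should tighten if you write this up: \cref{prop: dimG_proj_faithful} is stated for finitely generated projective $\mathcal{R}_{\scalebox{0.8}{$K[G]$}}$-modules, so the object you should cite as nonzero is $\mathcal{R}_{\scalebox{0.8}{$K[G]$}}e_k$ rather than $\UO(G)e_k$ (the positivity of $\dim_{\scalebox{0.8}{$\UO(G)$}}(\UO(G)e_k)$ is the \emph{conclusion}); and when you tensor the inclusion $\bigoplus_k L_k \hookrightarrow \mathcal{R}_{\scalebox{0.8}{$K[G]$}}$ up to $\UO(G)$ it is worth noting explicitly that the multiplication map identifies $\UO(G)\otimes_{\scalebox{0.8}{$\mathcal{R}_{\scalebox{0.8}{$K[G]$}}$}}\mathcal{R}_{\scalebox{0.8}{$K[G]$}}e_k$ with the left ideal $\UO(G)e_k$, so the internal directness is preserved. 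Neither of these is a gap, only a matter of phrasing.
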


    For a ring $R$, we recall that $K_0(R)$ is the free abelian group on the set of finitely generated projective left $R$-modules modulo the relation $[P_1] + [P_3] = [P_2]$ if there is a short exact sequence
    \[
    0 \rightarrow P_1 \rightarrow P_2 \rightarrow P_3 \rightarrow 0.
    \]
    Every ring homomorphism $f : R \rightarrow S$ induces a map from $K_0(R)$ to $K_0(S)$ that sends $[P]$ to $[S \otimes_R P]$.
    
    \begin{defn}{\cite[Conjecture 6.2]{Jaikin_l2survey}}
        Let $G$ be a group with finite $\lcm(G)$ and $K\subseteq \C$ a field closed under complex conjugation. We say that the {\it algebraic Atiyah conjecture for $G$ holds over $K$} if the map 
        \[
        \bigoplus_{F \leqslant G, |F|<\infty} K_0(K[F]) \rightarrow K_0(\mathcal{R}_{\scalebox{0.8}{$K[G]$}})
        \]
        is surjective. We call this map the {\it algebraic Atiyah map}.
    \end{defn}

    It follows from \cref{prop: vNr_proj_mod} that the algebraic Atiyah conjecture implies the strong Atiyah conjecture. More specifically, every finitely presented $\mathcal{R}_{\scalebox{0.8}{$K[G]$}}$-module $M$ is projective. So, if the algebraic Atiyah conjecture is satisfied, there exist $P_1$ and $P_2$ induced modules coming from projective $K[F]$-modules for some collection of finite subgroups $F$ of $G$ satisfying $M \oplus P_1 \cong P_2$. But then, according to \cref{prop: preserve_dim_gps}, the $L^2$-dimension of each induced module belongs to $\frac{1}{\lcm(G)} \Z$, and therefore, so does the $L^2$-dimension of $M$.

\subsection{Finite rank operators}\label{subsec: finite_rank_op}

    A {\it finite rank operator} is a bounded linear operator between Banach spaces whose range is finite dimensional. For a positive integer $n$, an operator $T$ on a Hilbert space $\mathcal{H}$ of finite rank $n$ takes the form
    \[
    T(h) = \sum_{i=1}^n \langle h, u_i \rangle v_i  
    \]
    where $\{u_i : 1 \leq i \leq n\}$ are vectors in $\mathcal{H}$ and $\{v_i : 1 \leq i \leq n\}$ is an orthonormal basis in $T(\mathcal{H})$, and $h\in \mathcal{H}$. Thus, for two vectors $u, v \in \mathcal{H}$, $T_{u,v}$ will stand for the finite rank operator defined as $T_{u,v}(h) := \langle h, u \rangle v$ for every $h \in \mathcal{H}$. We denote the class of finite rank operators over a Hilbert space $\mathcal{H}$ by $F(\mathcal{H})$. Note that it is a left and right ideal in $\mathcal{B}(\mathcal{H})$.

    For this class of operators a trace functional can be defined. Concretely, if $\{e_i : i \in I\}$ is a basis for $\mathcal{H}$, define $\trace: F(\mathcal{H}) \rightarrow \C$ by 
    \[
    \trace(T) = \sum_{i \in I} \langle T e_i, e_i \rangle.
    \]
    A few comments are in order. First, the above sum is absolutely convergent. Indeed, for $u, v \in \mathcal{H}$, we have that $u = \sum_{i \in I} \alpha_i e_i$ and $v = \sum_{i \in I} \beta_i e_i$. Thus, if we set $u' = \sum_{i \in I} |\alpha_i| e_i$ and $v' = \sum_{i \in I} |\beta_i| e_i$, both of which are elements in $\mathcal{H}$, it holds that
    \[
        \sum_{i\in I} |\langle T_{u,v}(e_i), e_i \rangle | = \sum_{i\in I} |v_{i} \overline{u_{i}}| = \langle v', u' \rangle < \infty.
    \]
    Second, because of the absolute convergence, it is a straightforward computation to check that the trace functional does not depend on the choice of the basis. Moreover, if $S,T\in F(\mathcal{H})$ then $\trace(ST) = \trace(TS)$ (see \cite[Exercise IX.2.20]{Conway_FunctionalAnal} for further properties).

\subsection{Graphs of rings} \label{subsec: graph_rings}

    Graphs of rings are defined in complete analogy with graphs of groups. The amalgamated product of rings over a common subring or the coproduct of rings was studied extensively by Bergman in \cite{Bergman_Coprod}, and the HNN extension of rings was introduced and studied by Dicks in \cite{Dicks_HNN}. We recall here the definitions:
    \begin{enumerate}[label=(\roman*)]
        \item Coproducts of rings: Let $R_e, R_{v_1}$ and $R_{v_2}$ be rings with ring homomorphisms $R_e \rightarrow R_{v_i}$ for $i=1,2$. We shall denote by $R_{v_1} \ast_{R_e} R_{v_2}$ the {\it coproduct of the $R_{v_i}$ over $R_e$} defined as the pushout of the maps $R_e \rightarrow R_{v_i}$ for $i=1,2$ in the category of rings. This construction will correspond to a graph with two vertices and a single edge.
        \item HNN of rings: Let $R_e$ and $R_v$ be rings with ring homomorphisms $\varphi_e, \varphi_{\overline{e}} : R_e \rightarrow R_v$. The {\it HNN extension of $R_v$ over $R_e$} is defined as an $R_v$-ring $(R , \eta)$ universal with respect to the existence of a distinguished unit $t_e$ such that the inner automorphism $i_{t_e} : R \rightarrow R, r \mapsto t_e^{-1} r t_e$ makes the diagram
        \[
        \begin{tikzcd}
            & R_v \arrow[r, "\eta"] & R \arrow[dd, "i_{t_e}"] \\
            R_e \arrow[ru, "\varphi_e"] \arrow[rd, "\varphi_{\overline{e}}"] & &\\
            & R_v \arrow[r, "\eta"] & R 
        \end{tikzcd}
        \]
        commutative. We denote it by $R_{v_{R_e}}\langle t, t^{-1} ; \varphi_{\overline{e}}\rangle$. This construction will correspond to a graph with a single vertex and one edge.
    \end{enumerate}
    It is worth mentioning that most of the results for modules over these ring constructions required that the ring homomorphisms are injective. In \cite[Section 3]{FisherPeralta_Kaplansky'sZD3mfld} both constructions are unified under the definition of graph of rings. Our graphs are connected and oriented, with $\overline{e}$ denoting the same edge as $e$ but with the opposite orientation. Every edge $e$ has an origin vertex $o(e)$ and a terminus vertex $t(e)$ such that $o(e) = t(\overline e)$. Graphs are allowed to have loops and multiple edges.

    \begin{defn}{\cite[Definition 3.1]{FisherPeralta_Kaplansky'sZD3mfld}}\label{def:treegraph}
        Let $\Gamma$ be a graph and let $T$ be a spanning tree. For each vertex $v$ of $\Gamma$ we have a \textit{vertex ring} $R_v$ and for each edge $e$ of $\Gamma$ we have an \textit{edge ring} $R_e$ and we impose $R_e = R_{\overline{e}}$ for every edge $e$. Moreover, for each (directed) edge $e$ there is an injective ring homomorphism $\varphi_e \colon R_e \rightarrow R_{t(e)}$. Then the \textit{graph of rings} $\mathscr R_{\Gamma, T} = (R_v, R_e)$ is the ring defined as follows:
        \begin{enumerate}[label=(\roman*)]
            \item For each edge of $e$ of $\Gamma$ we introduce a formal symbol $t_e$;
            \item $\mathscr R_{\Gamma, T}$ is generated by the vertex rings $R_v$ and the elements $t_e, t_e\inv$ and subjected to the relations
            \begin{itemize}
                \item $t_{\overline e}t_e = t_e t_{\overline e} = 1$;
                \item $t_e \varphi_{\overline{e}}(r) t_{\overline e} = \varphi_e(r)$ for all $r \in R_e$; and
                \item if $e \in T$, then $t_e = 1$. 
            \end{itemize}
        \end{enumerate}
    \end{defn}

    According to \cite[Proposition 3.4]{FisherPeralta_Kaplansky'sZD3mfld} the isomorphism type of $\mathscr R_{\Gamma,T}$ is independent of the choice of $T$. We will thus simplify the notation and denote the graph of rings by $\mathscr R_\Gamma$. Note that every graph of rings can be broken into two special ring constructions, namely coproducts of rings and HNN of rings. Most of the work done in \cite{Bergman_Coprod} and \cite{Dicks_HNN} is geared towards characterizing the structure of induced (also called standard by Bergman) modules and projective modules. By an {\it induced} (left) $\mathscr{R}_{\Gamma}$-module, we shall mean an $\mathscr{R}_{\Gamma}$-module of the form $M = \oplus_{v \in V} \mathscr{R}_{\Gamma} \otimes_{R_{v}} M_{v}$, where each $M_{v}$ is a left $R_{v}$-module. The following result is a direct consequence of \cite[Theorem 2.2]{Bergman_Coprod} and \cite[Theorem 21]{Dicks_HNN}.

    \begin{thm}\label{thm: induced_submod}
        Let $\mathscr R_{\Gamma} = (R_v, R_e)$ be a graph of rings with finite graph $\Gamma$. Assume that $R_e$ is a semisimple ring for every edge $e$. Then any left $\mathscr R_{\Gamma}$-submodule of an induced $\mathscr R_{\Gamma}$-module is isomorphic to an induced $\mathscr R_{\Gamma}$-module.
    \end{thm}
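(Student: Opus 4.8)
The plan is to induct on the number of edges of the finite graph $\Gamma$. The base case is a graph with no edges, where $\mathscr R_\Gamma$ is the ring of its unique vertex, an induced module is an arbitrary module over that ring, and there is nothing to prove. For the inductive step, pick any edge $e$ of $\Gamma$. If $e$ is a bridge, deleting it splits $\Gamma$ into two subgraphs $\Gamma_1,\Gamma_2$, each with strictly fewer edges, and one sees from \cref{def:treegraph}, taking a spanning tree through $e$, that $\mathscr R_\Gamma$ is the coproduct $\mathscr R_{\Gamma_1}\ast_{R_e}\mathscr R_{\Gamma_2}$ over $R_e$. If $e$ is not a bridge, then $\Gamma':=\Gamma\setminus e$ is connected with one fewer edge and $\mathscr R_\Gamma$ is the HNN extension of $\mathscr R_{\Gamma'}$ over $R_e$ (with respect to $\varphi_e$ and $\varphi_{\overline{e}}$). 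In either case $\mathscr R_\Gamma$ is built in a single step, as a coproduct or an HNN extension over the semisimple edge ring $R_e$, from graphs of rings with fewer edges, so \cite[Theorem 2.2]{Bergman_Coprod} and \cite[Theorem 21]{Dicks_HNN} become the relevant tools.

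Before applying them I would isolate the one ring-theoretic input. Since the structure maps of a graph of rings with injective edge homomorphisms are themselves injective, $R_e$ sits as a subring inside each of $\mathscr R_{\Gamma_1},\mathscr R_{\Gamma_2}$ (resp.\ $\mathscr R_{\Gamma'}$); and because $R_e$ is semisimple, $R_e\cdot 1$ is a direct summand of any of these overrings, both as a left and as a right $R_e$-module, so each of them is faithfully flat as a left and as a right $R_e$-module. This is precisely the flatness hypothesis under which \cite[Theorem 2.2]{Bergman_Coprod} and \cite[Theorem 21]{Dicks_HNN} apply. I would also record that an induced $\mathscr R_\Gamma$-module is, relative to the one-step decomposition, a standard module in the sense of those theorems: by associativity of the tensor product and the embeddings $R_v\hookrightarrow\mathscr R_{\Gamma_i}\hookrightarrow\mathscr R_\Gamma$ one has
\[
\bigoplus_{v\in\V(\Gamma)}\mathscr R_\Gamma\otimes_{R_v}M_v\;\cong\;\Big(\mathscr R_\Gamma\otimes_{\mathscr R_{\Gamma_1}}N_1\Big)\oplus\Big(\mathscr R_\Gamma\otimes_{\mathscr R_{\Gamma_2}}N_2\Big),\qquad N_i:=\bigoplus_{v\in\V(\Gamma_i)}\mathscr R_{\Gamma_i}\otimes_{R_v}M_v,
\]
with each $N_i$ an induced $\mathscr R_{\Gamma_i}$-module; in the HNN case the analogous identity holds over the single base ring $\mathscr R_{\Gamma'}$.

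With this setup the argument closes formally. Given a submodule $M'$ of such a module, \cite[Theorem 2.2]{Bergman_Coprod} (resp.\ \cite[Theorem 21]{Dicks_HNN}) expresses $M'$ as a direct sum of modules induced up along $\mathscr R_{\Gamma_i}\hookrightarrow\mathscr R_\Gamma$ (resp.\ $\mathscr R_{\Gamma'}\hookrightarrow\mathscr R_\Gamma$) from $\mathscr R_{\Gamma_i}$-submodules (resp.\ $\mathscr R_{\Gamma'}$-submodules) of direct sums of copies of the $N_i$ and of $\mathscr R_{\Gamma_i}$. A direct sum of induced modules is induced, and $\mathscr R_{\Gamma_i}\cong\mathscr R_{\Gamma_i}\otimes_{R_v}R_v$ is induced, so these are submodules of induced modules over the smaller graphs of rings; the inductive hypothesis rewrites each of them as an induced module, and transitivity of induction, $\mathscr R_\Gamma\otimes_{\mathscr R_{\Gamma_i}}\big(\mathscr R_{\Gamma_i}\otimes_{R_v}L\big)\cong\mathscr R_\Gamma\otimes_{R_v}L$, then shows that each resulting piece is an induced $\mathscr R_\Gamma$-module over the vertices of $\Gamma_i$ (resp.\ of $\Gamma'$, which are those of $\Gamma$). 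Summing over $i=1,2$ (resp.\ taking the single HNN contribution) presents $M'$ as an induced $\mathscr R_\Gamma$-module, completing the induction.

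The main obstacle is bookkeeping rather than conceptual: one has to match the notion of standard module and the exact statements of \cite[Theorem 2.2]{Bergman_Coprod} and \cite[Theorem 21]{Dicks_HNN} — phrased for a single coproduct or a single HNN extension over one base ring — with the notion of induced $\mathscr R_\Gamma$-module used here, and to verify that the module pieces produced by those theorems are genuinely submodules of induced modules over the smaller graphs of rings, so that the inductive hypothesis can be fed back in. Once this dictionary is in place, the only substantive ingredient is the faithful flatness supplied by the semisimplicity of the edge rings.
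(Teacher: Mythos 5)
Your proposal is correct and takes essentially the same approach as the paper, which simply declares the theorem a direct consequence of Bergman's and Dicks's single-step submodule theorems; your induction on edges (a bridge giving a two-factor coproduct, a non-bridge giving an HNN extension, in both cases over the semisimple edge ring $R_e$) is the natural way to assemble those into the graph-of-rings statement. The one substantive point to check against the sources, which you correctly flag, is that Bergman's Theorem~2.2 and Dicks's Theorem~21 give not merely that a submodule of a standard module is isomorphic to some standard module, but that the new factors $L_i$ can be taken as submodules of the original factors together with free summands — without that refinement the inductive hypothesis could not be fed back in.
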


    Similarly, for projectives modules a combination of \cite[Corollary 2.6]{Bergman_Coprod} and \cite[Corollary 22]{Dicks_HNN} yields the next upshot.

    \begin{prop}\label{prop: proj_mod_GR}
        Let $\mathscr R_{\Gamma} = (R_v, R_e)$ be a graph of rings with finite graph $\Gamma$. Assume that $R_e$ is a semisimple ring for every edge $e$. Then every projective left $\mathscr R_{\Gamma}$-module $P$ has the form $\oplus_{v \in V} \mathscr{R}_{\Gamma} \otimes_{R_{v}} P_{v}$, where $P_v$ is a projective left $R_v$-module for every $v\in V$.
    \end{prop}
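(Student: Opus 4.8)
The plan is to assemble $\mathscr R_\Gamma$ from the two elementary constructions for which the statement is already available — coproducts of rings, governed by \cite[Corollary 2.6]{Bergman_Coprod}, and HNN extensions of rings, governed by \cite[Corollary 22]{Dicks_HNN} — and to propagate the conclusion through each step by induction. Fix a spanning tree $T$ of $\Gamma$ and enumerate the edges outside $T$ as $e_1,\dots,e_k$. Reading off the presentation in \cref{def:treegraph}, the sub-graph-of-rings $\mathscr R_T$ supported on $T$ is the vertex rings amalgamated along the tree edges (an iterated coproduct), and restoring $e_1,\dots,e_k$ one at a time produces a chain of ring homomorphisms $\mathscr R_T = \mathscr R_0 \hookrightarrow \mathscr R_1 \hookrightarrow \dots \hookrightarrow \mathscr R_k = \mathscr R_\Gamma$ in which each $\mathscr R_j$ is the HNN extension of $\mathscr R_{j-1}$ over the edge ring $R_{e_j}$, with structure maps $\varphi_{e_j}$ and $\varphi_{\overline{e_j}}$ followed by the canonical embeddings of the incident vertex rings into $\mathscr R_{j-1}$. (Each intermediate graph $\Gamma\setminus\{e_{j+1},\dots,e_k\}$ is connected since it still contains $T$, so $\mathscr R_j$ really is the graph of rings on it.)

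First I would handle the tree, by induction on the number of vertices of $T$. If $T$ has a single vertex there is nothing to prove. Otherwise choose a leaf $v$ with incident tree edge $e$, delete $v$ and $e$ to obtain $T'$, and observe $\mathscr R_T \cong \mathscr R_{T'}\ast_{R_e}R_v$, the coproduct being taken along $\varphi_e\colon R_e\to R_v$ and $R_e\xrightarrow{\varphi_{\overline e}}R_{o(e)}\hookrightarrow\mathscr R_{T'}$. Since $R_e$ is semisimple every $R_e$-module is projective, hence flat; combined with injectivity of the edge maps (which makes the vertex rings faithful $R_e$-modules) and the embedding $R_{o(e)}\hookrightarrow\mathscr R_{T'}$ supplied by \cite[Proposition 3.4]{FisherPeralta_Kaplansky'sZD3mfld}, this places us within the hypotheses of \cite[Corollary 2.6]{Bergman_Coprod}. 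Hence any projective left $\mathscr R_T$-module splits as $(\mathscr R_T\otimes_{\mathscr R_{T'}}Q)\oplus(\mathscr R_T\otimes_{R_v}P_v)$ with $Q$ projective over $\mathscr R_{T'}$ and $P_v$ projective over $R_v$; feeding in the inductive description $Q\cong\bigoplus_{w\in V(T')}\mathscr R_{T'}\otimes_{R_w}P_w$ and using associativity of the tensor product, $\mathscr R_T\otimes_{\mathscr R_{T'}}(\mathscr R_{T'}\otimes_{R_w}P_w)\cong\mathscr R_T\otimes_{R_w}P_w$, yields the desired form over $\mathscr R_T$.

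Next I would run through the non-tree edges, inducting on $k$; the base case $k=0$ is the previous paragraph. For the inductive step, $\mathscr R_\Gamma=\mathscr R_k$ is the HNN extension of $\mathscr R_{k-1}$ over the semisimple ring $R_{e_k}$, so \cite[Corollary 22]{Dicks_HNN} shows that every projective left $\mathscr R_\Gamma$-module is induced, i.e. isomorphic to $\mathscr R_\Gamma\otimes_{\mathscr R_{k-1}}Q$ for some projective left $\mathscr R_{k-1}$-module $Q$. Applying the inductive hypothesis to $Q$ and once more invoking associativity of the tensor product gives $P\cong\bigoplus_{v\in V}\mathscr R_\Gamma\otimes_{R_v}P_v$ with each $P_v$ a projective left $R_v$-module, which is exactly the assertion.

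I expect the combinatorial induction to be routine; the point that needs care is checking that the hypotheses of the Bergman and Dicks structure theorems genuinely hold at each stage — namely that in every coproduct and HNN step the ambient ring is faithfully flat over the semisimple edge ring, and that the edge maps into the intermediate graphs of rings are the honest composites with the vertex-ring embeddings. Both are ensured by semisimplicity of the edge rings together with the embedding statement \cite[Proposition 3.4]{FisherPeralta_Kaplansky'sZD3mfld}, after which the argument is a bookkeeping exercise with induced modules, entirely parallel to the proof of \cref{thm: induced_submod}.
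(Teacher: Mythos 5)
Your proposal is correct and takes essentially the same approach the paper intends: the paper's proof is just the one-line citation of Bergman's Corollary 2.6 and Dicks's Corollary 22, and you supply exactly the spanning-tree/HNN induction that combines them. The only minor inaccuracy is that \cite[Proposition 3.4]{FisherPeralta_Kaplansky'sZD3mfld} asserts independence of the spanning tree rather than the vertex-ring embeddings per se, but that embedding is standard under the semisimple edge-ring hypothesis (it is built into Bergman's and Dicks's setups), so nothing in the argument breaks.
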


    There are also right module versions of the above results.

\subsection{Universal localization} \label{subsec: univ_local}

    Let $R$ be a ring and $\Sigma$ a set of maps between finitely generated projective left $R$-modules. Then there is an $R$-ring $R_{\Sigma}$ universal with respect to the property that every element $R_{\Sigma} \otimes_R \alpha$ for $\alpha \in \Sigma$ has an inverse cf. \cite[Theorem 4.1]{Schofield85}. We call $R_{\Sigma}$ a \textit{universal localization of $R$ at $\Sigma$}. One can give an explicit construction of $R_{\Sigma}$ following \cite[Construction 2.1]{BergmanDicks_UnivLocal}. To invert a map $\alpha: P \rightarrow Q$ in $\Sigma$ between finitely generated projective left $R$-modules, first choose idempotent endomorphisms $e,f$ of a free $R$-module $R^n$ with images $P$ and $Q$, respectively. Then $a = \alpha e$ is an endomorphism of $R^n$ with $ae = a = fa$. A map from $Q$ to $P$ inverse of $a$ would be represented by a matrix $a'$ satisfying
    \[
    e a' = a' = a' f, a' a = e \mbox{ and } a a' = f.
    \]
    The above equations are matrix equations, so we adjoin indeterminates $a_{ij}$ to $R$ and impose the above relations.

    Let $\dim$ be an $\SMRF$ over $R$. Given a map $\alpha : P \rightarrow Q$ between two finitely generated projective left $R$-modules, define $\dim(\alpha)$ to be the infimum over all numbers $\dim(P')$, where $P'$ ranges over all finitely generated projective left $R$-modules $P'$ for which there exists a factorization $\alpha: P \rightarrow P' \rightarrow Q$. We call $\alpha$ a {\it full map with respect to $\dim$} if $\dim(P) = \dim(\alpha) = \dim(Q)$. We denote by $R_{\dim}$ the universal localization of $R$ at the set of all full maps with respect to $\dim$.

    \begin{lem}\label{lem: invertiv_is_full}
        Let $R$ be a ring, $R_{\Sigma}$ a universal localization of $R$ and $\dim$ an $\SMRF$ on $R_{\Sigma}$. Every map between finitely generated projective left $R$-modules that becomes invertible over $R_{\Sigma}$ is full with respect to the induced dimension on $R$.
    \end{lem}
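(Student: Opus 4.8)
The plan is to unwind the definitions in \cref{subsec: univ_local} using the structural ring homomorphism $\varphi \colon R \to R_{\Sigma}$ together with the additivity of Sylvester module rank functions over direct sums. Write $\alpha \colon P \to Q$ for a map between finitely generated projective left $R$-modules such that $R_{\Sigma} \otimes_R \alpha$ is an isomorphism, and let $\dim$ also denote the induced $\SMRF$ $\varphi^{\#}\dim$ on $R$. The one preliminary observation I would record is that on a finitely generated projective left $R$-module $P'$ we have $\dim(P') = \dim(R_{\Sigma}\otimes_R P')$: indeed $R_{\Sigma}\otimes_R P'$ is finitely generated projective, hence finitely presented, over $R_{\Sigma}$, and the equality follows from right-exactness of $R_{\Sigma}\otimes_R-$ together with the definition of the pull-back rank function on matrices.

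First I would settle the two ``endpoint'' values. Since $R_{\Sigma}\otimes_R\alpha$ is an isomorphism of finitely generated projective $R_{\Sigma}$-modules, $\dim(R_{\Sigma}\otimes_R P) = \dim(R_{\Sigma}\otimes_R Q)$, that is, $\dim(P) = \dim(Q)$. For the upper bound $\dim(\alpha) \le \dim(P)$ I would use the trivial factorization $P \xrightarrow{\ \id_P\ } P \xrightarrow{\ \alpha\ } Q$ through the finitely generated projective module $P$; since $\dim(\alpha)$ is by definition an infimum over such factorizations, this already gives $\dim(\alpha) \le \dim(P) = \dim(Q)$.

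The heart of the argument is the reverse inequality $\dim(\alpha) \ge \dim(P)$. Given any factorization $\alpha \colon P \xrightarrow{\ \beta\ } P' \xrightarrow{\ \gamma\ } Q$ with $P'$ finitely generated projective over $R$, applying $R_{\Sigma}\otimes_R-$ yields $(R_{\Sigma}\otimes\gamma)\circ(R_{\Sigma}\otimes\beta) = R_{\Sigma}\otimes\alpha$, which is invertible. Hence $(R_{\Sigma}\otimes\alpha)^{-1}\circ(R_{\Sigma}\otimes\gamma)$ is a left inverse of $R_{\Sigma}\otimes\beta$, so $R_{\Sigma}\otimes\beta$ is a split monomorphism and $R_{\Sigma}\otimes_R P$ is isomorphic to a direct summand of $R_{\Sigma}\otimes_R P'$, with complement again finitely generated projective, being a direct summand of the finitely generated projective module $R_{\Sigma}\otimes_R P'$. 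By additivity of $\dim$ over direct sums, $\dim(R_{\Sigma}\otimes_R P') \ge \dim(R_{\Sigma}\otimes_R P)$, i.e. $\dim(P') \ge \dim(P)$. Taking the infimum over all admissible factorizations gives $\dim(\alpha) \ge \dim(P)$, and combining with the previous paragraph we obtain $\dim(P) = \dim(\alpha) = \dim(Q)$, so $\alpha$ is full with respect to the induced dimension.

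I do not anticipate a serious obstacle: the whole argument is a formal consequence of the definitions, the $\SMRF$ axioms, and the fact that an invertible composite forces the first factor to be split injective. The only place that needs a little care is checking that the splitting in the last step produces a finitely presented (indeed finitely generated projective) complement, so that the additivity axiom of Sylvester module rank functions may legitimately be invoked.
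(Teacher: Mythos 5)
Your proof is correct. It takes a route that is dual to the paper's but is equally formal. The paper bounds $\dim(P')$ from below via the second factor: since $\coker(\alpha)$ surjects onto $\coker(\alpha_2)$ and $R_\Sigma \otimes_R \coker(\alpha)$ vanishes, right-exactness kills $R_\Sigma \otimes_R \coker(\alpha_2)$, and the exact-sequence axiom (iii) of an $\SMRF$ applied to $P' \to Q \to \coker(\alpha_2) \to 0$ gives $\dim(P') \geq \dim(Q)$. You instead bound via the first factor: the composite $(R_\Sigma\otimes\alpha)^{-1}\circ(R_\Sigma\otimes\gamma)$ splits $R_\Sigma\otimes\beta$, so $R_\Sigma\otimes_R P$ is a direct summand of $R_\Sigma\otimes_R P'$ with finitely generated projective complement, and the additivity axiom (ii) gives $\dim(P')\geq\dim(P)$. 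Since $\dim(P)=\dim(Q)$ both reach the same conclusion. The paper's version is a touch more economical in that it only needs the cokernel (automatically finitely presented) rather than the care you correctly flag about the complement being finitely presented so additivity applies, but the difference is cosmetic; both are honest uses of the $\SMRF$ axioms and neither is missing a step.
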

    \begin{proof}
        In this proof, we will abuse notation and write $\dim(M)$ for the induced dimension of a finitely presented left $R$-module $M$ instead of $\dim(R_{\Sigma} \otimes_R M)$. Suppose that $\alpha: P \rightarrow Q$ is a map of finitely generated projective left $R$-modules that becomes invertible over $R_{\Sigma}$. Then $R_{\Sigma} \otimes_ R P \cong R_{\Sigma} \otimes_R Q$, and in particular, $\dim(P) = \dim(Q)$. Let $P'$ be a finitely generated projective left $R$-module for which there exists a factorization
        \[
        \alpha : P \xrightarrow{\alpha_1} P' \xrightarrow{\alpha_2} Q.
        \]
        Then $\coker(\alpha)$ surjects onto $\coker(\alpha_2)$. Since taking tensor product is right exact and $R_{\Sigma} \otimes_R \coker(\alpha)$ vanishes, $R_{\Sigma} \otimes_R \coker(\alpha_2)$ also vanishes. Hence, we get that
        \[
            \dim(P') = \dim(P') + \dim(\coker(\alpha_2))  \geq \dim(Q).
        \]
        Thus we conclude that $\dim(\alpha) = \dim(Q)$, which shows that $\alpha$ is full.
    \end{proof}
    
    In some sense, full maps are the maps that have a chance of becoming invertible. Actually, over a von Neumann regular ring the following holds due to \cref{prop: vNr_proj_mod}.

    \begin{lem}\label{lem: full_vNr}
        Let $R$ be a von Neumann regular ring and let $\dim$ be an $\SMRF$ faithful over finitely generated projective left $R$-modules. Then every full map over $\dim$ is an isomorphism.
    \end{lem}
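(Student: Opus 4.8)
The plan is to show that a full map $\alpha \colon P \to Q$ between finitely generated projective left $R$-modules, with $R$ von Neumann regular and $\dim$ faithful on such modules, is both injective and surjective. I would work with $\coker(\alpha)$ and $\ker(\alpha)$ and show both have dimension zero, then invoke faithfulness to conclude they are zero, and finally upgrade "injective plus surjective" to "isomorphism" — which is automatic for a module homomorphism.

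First I would handle surjectivity. Since $R$ is von Neumann regular, the image $\alpha(P) \subseteq Q$ is a finitely generated submodule of a projective module, hence (being finitely presented over a von Neumann regular ring, or directly by \cref{prop: vNr_proj_mod} applied after noting $Q/\alpha(P)$ is finitely presented) we get that $\alpha$ factors as $P \twoheadrightarrow \alpha(P) \hookrightarrow Q$ with $\alpha(P)$ a finitely generated projective module. Using this as the factorization $P \to P' \to Q$ with $P' = \alpha(P)$ in the definition of $\dim(\alpha)$, fullness gives $\dim(Q) = \dim(\alpha) \leq \dim(\alpha(P))$. On the other hand, the short exact sequence $0 \to \alpha(P) \to Q \to \coker(\alpha) \to 0$ together with exactness of $\dim$ on a von Neumann regular ring (noted right after \cref{def: SMRF}) yields $\dim(Q) = \dim(\alpha(P)) + \dim(\coker(\alpha))$, so $\dim(\coker(\alpha)) \leq 0$, hence $\dim(\coker(\alpha)) = 0$. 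Now $\coker(\alpha) = Q/\alpha(P)$ is finitely presented, hence projective by \cref{prop: vNr_proj_mod}, so faithfulness of $\dim$ forces $\coker(\alpha) = 0$, i.e. $\alpha$ is surjective.

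Next, injectivity. Because $Q$ is projective and $\alpha$ is now surjective, the sequence $0 \to \ker(\alpha) \to P \to Q \to 0$ splits, so $\ker(\alpha)$ is a direct summand of $P$, hence finitely generated projective, and $\dim(P) = \dim(\ker(\alpha)) + \dim(Q)$. Fullness gives $\dim(P) = \dim(Q)$, so $\dim(\ker(\alpha)) = 0$, and faithfulness again forces $\ker(\alpha) = 0$. Thus $\alpha$ is a bijective module homomorphism, hence an isomorphism.

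The only mild subtlety — the part I would be most careful about — is justifying that the naive factorization through the image is legitimate in the definition of $\dim(\alpha)$, i.e. that $\alpha(P)$ really is a finitely generated \emph{projective} module so that it is an admissible choice of $P'$. This rests on $\coker(\alpha)$ being finitely presented (immediate, as $P$ is finitely generated and $Q$ finitely presented) and then on \cref{prop: vNr_proj_mod} (finitely presented implies projective over a von Neumann regular ring), after which $\alpha(P)$, as the kernel of the split surjection $Q \to \coker(\alpha)$, is a finitely generated projective summand of $Q$. Everything else is a direct application of additivity/exactness of $\dim$ and the faithfulness hypothesis; no genuine obstacle arises.
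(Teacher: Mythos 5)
Your proposal is correct and takes essentially the same route as the paper's proof: both split $Q \cong \im(\alpha) \oplus \coker(\alpha)$ using \cref{prop: vNr_proj_mod}, deduce $\dim(\coker\alpha)=0$ and then $\dim(\ker\alpha)=0$ from fullness together with exactness of $\dim$, and finally invoke faithfulness on projectives. The only difference is cosmetic: you explicitly justify the intermediate equality $\dim(\im\alpha)=\dim(Q)$ via the admissible factorization through the image plus additivity, whereas the paper states it without elaboration.
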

    \begin{proof}
        Let $\alpha : P \rightarrow Q$ be a full map between two finitely generated projective left $R$-modules. Note that $\coker(\alpha)$ is a finitely presented $R$-module. Thus, by \cref{prop: vNr_proj_mod} $\coker(\alpha)$ is projective as $R$-module, and hence, $Q$ splits as a direct sum of $\im(\alpha)$ and $\coker(\alpha)$. In particular, $\im(\alpha)$ is a finitely generated projective $R$-module. But, recall that $\alpha$ is a full map, so $\dim(\im(\alpha))= \dim(Q)$. In other words, $\coker(\alpha)$ is a projective $R$-module of zero dimension. Since the $\SMRF$ is faithful over projective $R$-modules, $\coker(\alpha)$ must vanish, and therefore $\alpha$ is surjective. To show that $\alpha$ is also injective, note that $P$ decomposes as a direct sum of $\ker(\alpha)$ and $Q$ being $Q$ a projective $R$-module. So, $\ker(\alpha)$ is another projective $R$-module of zero dimension, that is, the zero module. This shows that $\alpha$ is injective, and hence an isomorphism.
    \end{proof}
    
    A left $R_{\Sigma}$-module $M$ is called \textit{stably induced} if there exists some left $R$-module $N$ and a non-negative integer $n$ such that $M \oplus R_{\Sigma}^n \cong R_{\Sigma} \otimes_R N$. It turns out that under the following conditions every finitely generated projective $R_{\Sigma}$-module is of this form. We recall that a ring $R$ is called \textit{hereditary} if all of its left and right ideals are projective $R$-modules.

    \begin{prop}\label{prop: project_mod_localization}
        Let $R$ be a hereditary ring with an SMRF $\dim$ that is faithful over finitely generated projective left $R$-modules and let $\Sigma$ be a collection of full maps with respect to $\dim$. Then any finitely generated projective left $R_{\Sigma}$-module $Q$ satisfies an equation of the form
        \[
        Q \oplus R_{\Sigma}^n \cong R_{\Sigma} \otimes_R P
        \]
        where $P$ is a finitely generated projective left $R$-module.
    \end{prop}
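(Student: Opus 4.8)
The plan is to prove that every finitely generated projective left $R_\Sigma$-module $Q$ is stably induced from $R$; the displayed isomorphism is merely a reformulation of this. I would first observe that this is equivalent to the surjectivity of $K_0(R)\to K_0(R_\Sigma)$. Indeed, if $[Q]$ lies in the image of this map, then $[Q]=[R_\Sigma\otimes_R P_1]-[R_\Sigma\otimes_R P_0]$ for some finitely generated projective left $R$-modules $P_0,P_1$, hence $Q\oplus(R_\Sigma\otimes_R P_0)\oplus R_\Sigma^m\cong(R_\Sigma\otimes_R P_1)\oplus R_\Sigma^m$ for some $m\geq 0$; writing $R_\Sigma^m=R_\Sigma\otimes_R R^m$, choosing $P_2$ with $P_0\oplus R^m\oplus P_2\cong R^N$, and adding $R_\Sigma\otimes_R P_2$ to both sides turns this into $Q\oplus R_\Sigma^N\cong R_\Sigma\otimes_R(P_1\oplus R^m\oplus P_2)$, which is of the required form. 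The converse implication is trivial, so it suffices to prove $K_0(R)\to K_0(R_\Sigma)$ is onto.

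Next I would extract what fullness and the hereditary hypothesis give about the maps in $\Sigma$ individually. Let $\alpha\colon P\to Q$ belong to $\Sigma$. Since $R$ is left hereditary, the submodules $\ker\alpha\leq P$ and $\im\alpha\leq Q$ are projective, and the short exact sequence $0\to\ker\alpha\to P\to\im\alpha\to 0$ splits, so $\dim(P)=\dim(\ker\alpha)+\dim(\im\alpha)$. The factorization $P\twoheadrightarrow\im\alpha\hookrightarrow Q$ through the finitely generated projective module $\im\alpha$ shows $\dim(\alpha)\leq\dim(\im\alpha)$; combining this with fullness, $\dim(\ker\alpha)=\dim(P)-\dim(\im\alpha)\leq\dim(P)-\dim(\alpha)=0$, hence $\dim(\ker\alpha)=0$ and therefore $\ker\alpha=0$ by faithfulness of $\dim$ on finitely generated projectives. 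Thus every $\alpha\in\Sigma$ is injective, and after identifying $P$ with its image we may regard it as the inclusion of a finitely generated projective submodule $P\leq Q$ with finitely presented cokernel $C=Q/P$. Since $R_\Sigma\otimes_R-$ is right exact and carries $\alpha$ to an isomorphism, $R_\Sigma\otimes_R C=0$; and since $P$ and $Q$ are projective, $\Tor_1^R(R_\Sigma,C)=\ker(R_\Sigma\otimes_R\alpha)=0$.

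It then remains to deduce the surjectivity of $K_0(R)\to K_0(R_\Sigma)$ from the hereditariness of $R$ and the fact that the cokernels of the inclusions in $\Sigma$ are $\Sigma$-torsion with vanishing higher $\Tor$. For this I would appeal to the structure theory of universal localizations of rings of global dimension at most one: by Schofield's work \cite{Schofield85} (see also \cite{BergmanDicks_UnivLocal}) the ring $R_\Sigma$ is again left hereditary and the ring epimorphism $R\to R_\Sigma$ is stably flat, so the Neeman--Ranicki localization sequence applies and in low degrees reads
\[
K_1(R_\Sigma)\longrightarrow K_0(\mathcal E)\longrightarrow K_0(R)\longrightarrow K_0(R_\Sigma)\longrightarrow K_{-1}(\mathcal E),
\]
where $\mathcal E$ is the abelian category of finitely presented left $R$-modules $M$ with $R_\Sigma\otimes_R M=0$ and $\Tor_i^R(R_\Sigma,M)=0$ for all $i\geq 1$ — a category containing all the cokernels $C$ produced above. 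Surjectivity of $K_0(R)\to K_0(R_\Sigma)$ then follows from the vanishing of the negative $K$-theory of $\mathcal E$, which holds because $\mathcal E$ is a length category.

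I expect the genuine difficulty to be concentrated in this last step, namely controlling the finitely generated projective $R_\Sigma$-modules and ruling out ``new'' ones not induced from $R$; fullness of the maps in $\Sigma$ is precisely what forces their cokernels to be small enough (zero-dimensional, indeed $\Sigma$-torsion) for this to succeed. A more self-contained alternative — after reducing, via the filteredness of $R_\Sigma=\varinjlim_{\Sigma_0}R_{\Sigma_0}$ over finite $\Sigma_0$ (along which every finitely generated projective is induced from a finite stage) and replacing a finite $\Sigma_0$ by the single full map $\alpha=\bigoplus_i\alpha_i$ — would be to work directly with the Bergman--Dicks presentation of $R_{\{\alpha\}}$ and a Cramer's-rule normal form for its elements in order to show that every idempotent matrix over $R_{\{\alpha\}}$ is stably conjugate to one induced from $R$.
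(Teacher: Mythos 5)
Your argument takes a genuinely different route from the paper's. The paper's proof is a pure citation to Schofield's Theorem 5.2, with L\"uck's Theorem 10.74 indicated as a model for the method; the underlying argument there works directly with idempotent matrices over $R_{\Sigma}$ using Bergman's structure theory of finitely generated projectives over coproducts of rings, showing by hand that every such idempotent is stably induced. You instead reduce the claim to surjectivity of $K_0(R)\to K_0(R_{\Sigma})$ and appeal to a Neeman--Ranicki-type localization sequence. Your reduction to $K_0$-surjectivity is correct, as is the useful observation that fullness, faithfulness, and hereditariness force each $\alpha\in\Sigma$ to be injective with $\Sigma$-torsion cokernel and vanishing higher $\Tor$.

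The gap is in the decisive step, which you yourself flag. The localization sequence does continue past $K_0(R_{\Sigma})$ to $K_{-1}(\mathcal{E})$, so surjectivity is not automatic and requires $K_{-1}(\mathcal{E})=0$; your stated justification --- that $\mathcal{E}$ is a length category --- is not established and fails in general for a non-Noetherian hereditary ring (e.g.\ a free algebra), since a $\Sigma$-torsion finitely presented module need not have a finite composition series. A correct repair is available: since a left hereditary ring is left coherent, finitely presented left $R$-modules form an abelian category of which $\mathcal{E}$ is a Serre subcategory, and Schlichting's theorem then gives vanishing negative K-theory of the small abelian category $\mathcal{E}$; but one must also carefully identify the relative term in the Neeman--Ranicki sequence with $K_*(\mathcal{E})$ rather than with a Waldhausen category of acyclic complexes. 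None of this is carried out. Your proposed alternative via the Bergman--Dicks presentation is in fact essentially the route Schofield and L\"uck take and would avoid these subtleties, but it is left as a sketch. As written, the proof does not close at its crucial step.
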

    \begin{proof}
        This is a particular case of \cite[Theorem 5.2]{Schofield85} and the proof goes as in \cite[Theorem 10.74]{Luck02}.
    \end{proof}

    Another interesting feature of localization is its transitivity whenever the localization is done over stably induced modules.

    \begin{prop}{\cite[Theorem 4.6]{Schofield85}}\label{prop: local_transitive}
        Let $\Sigma$ be a collection of maps between finitely generated projectives over $R$, and let $\Sigma'$ be a collection of maps between stably induced finitely generated projectives over $R_{\Sigma}$. Then $(R_{\Sigma})_{\Sigma'}$ is a universal localization of $R$ at a suitable set of maps between finitely generated projectives over $R$ that contains $\Sigma$.
    \end{prop}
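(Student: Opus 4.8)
The plan is to reduce the iterated localization $(R_\Sigma)_{\Sigma'}$ to a single universal localization of $R$ by comparing universal properties, with Cramer's rule for universal localizations serving as the essential bridge. First I would unwind the universal property: by the universal property of $R_\Sigma$, a ring homomorphism $f\colon R\to S$ factors (uniquely) through $R_\Sigma$ exactly when $f$ inverts every map in $\Sigma$, and then the resulting homomorphism $R_\Sigma\to S$ factors through $(R_\Sigma)_{\Sigma'}$ exactly when $S\otimes_{R_\Sigma}\alpha'$ is an isomorphism for every $\alpha'\in\Sigma'$. Thus $(R_\Sigma)_{\Sigma'}$, regarded as an $R$-ring, represents the functor sending an $R$-ring $S$ to the set of structure maps that invert $\Sigma$ and for which each base change $S\otimes_{R_\Sigma}\alpha'$ is an isomorphism. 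The goal is then to produce a set $\Sigma''$ of maps between finitely generated projective left $R$-modules with $\Sigma\subseteq\Sigma''$ such that, for every $R$-ring $S$, these two conditions together are equivalent to ``$S$ inverts $\Sigma''$''; by Yoneda, $R_{\Sigma''}$ and $(R_\Sigma)_{\Sigma'}$ then represent the same functor on $R$-rings and are isomorphic as $R$-rings, which is exactly the claim.

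Next I would fix a single $\alpha'\colon P'\to Q'$ in $\Sigma'$ and stabilise it. Since $P'$ and $Q'$ are stably induced finitely generated projective $R_\Sigma$-modules, after adding a common free summand there are finitely generated projective left $R$-modules $P,Q$ and an integer $n$ with $R_\Sigma\otimes_R P\cong P'\oplus R_\Sigma^{\,n}$ and $R_\Sigma\otimes_R Q\cong Q'\oplus R_\Sigma^{\,n}$ — this (together with the harmless remark that the inducing modules may be taken finitely generated projective over $R$) is the only place where the hypothesis ``stably induced'' enters. Setting $\tilde\alpha':=\alpha'\oplus\id_{R_\Sigma^{\,n}}\colon R_\Sigma\otimes_R P\to R_\Sigma\otimes_R Q$, for every $R_\Sigma$-ring $S$ the map $S\otimes_{R_\Sigma}\alpha'$ is an isomorphism if and only if $S\otimes_{R_\Sigma}\tilde\alpha'$ is; and, presenting $P$ and $Q$ as images of idempotent matrices over $R$, the map $\tilde\alpha'$ becomes an honest matrix with entries in $R_\Sigma$.

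The heart of the argument is to descend $\tilde\alpha'$ to $R$. Since $R\to R_\Sigma$ is obtained by inverting $\Sigma$, every element of $R_\Sigma$ solves a linear system over $R$ whose coefficient matrix becomes invertible over $R_\Sigma$ (Cramer's rule for universal localizations; see the stabilisation lemmas of \cite[\S4]{Schofield85}, or Malcolmson's normal form \cite{Malcolmson}). Applying this to the matrix $\tilde\alpha'$ and iterating the Schur-complement identity to absorb the adjoined inverse entries into block matrices over $R$, I would obtain square matrices $E,F$ over $R$ that become invertible over $R_\Sigma$, together with a matrix $\hat\alpha$ over $R$, such that over $R_\Sigma$ the map $\tilde\alpha'$ (augmented by an identity block) equals $E\circ(R_\Sigma\otimes_R\hat\alpha)\circ F$. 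Interpreting $E,F,\hat\alpha$ as maps between finitely generated projective left $R$-modules, it follows that any $R$-ring $S$ inverting $\Sigma$ — hence inverting $E$ and $F$, which become invertible over $R_\Sigma$ — makes $S\otimes_{R_\Sigma}\tilde\alpha'$ an isomorphism if and only if it makes $S\otimes_R\hat\alpha$ one.

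Finally I would assemble $\Sigma''$: let $\Sigma^{\bullet}$ be the set of all maps between finitely generated projective left $R$-modules that become invertible over $R_\Sigma$, so that $\Sigma\subseteq\Sigma^{\bullet}$ and $R_{\Sigma^{\bullet}}=R_\Sigma$, and put $\Sigma'':=\Sigma^{\bullet}\cup\{\hat\alpha:\alpha'\in\Sigma'\}\supseteq\Sigma$. If an $R$-ring $S$ inverts $\Sigma''$, then it inverts $\Sigma$, hence receives a map from $R_\Sigma$, and it inverts all of $\Sigma^{\bullet}$ and every $\hat\alpha$, hence every $\tilde\alpha'$ and every $\alpha'$; conversely, if $S$ inverts $\Sigma$ and every $S\otimes_{R_\Sigma}\alpha'$, then it inverts all of $\Sigma^{\bullet}$ (these become invertible over $R_\Sigma$, which maps to $S$) and, by the previous step, every $\hat\alpha$. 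Hence $R_{\Sigma''}$ and $(R_\Sigma)_{\Sigma'}$ represent the same functor on $R$-rings, proving the proposition with $\Sigma''\supseteq\Sigma$. The main obstacle is the descent step: making precise, and proving, that a matrix over $R_\Sigma$ can be rationalised back to $R$ at the cost of multiplying by $\Sigma$-invertible matrices over $R$ and stabilising — this is the technical core of Schofield's theory of universal localization, and is exactly the point at which one must control the polynomial dependence of $\tilde\alpha'$ on the adjoined inverse entries.
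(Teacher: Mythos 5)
The paper does not prove this proposition; it cites Schofield's Theorem 4.6 directly, so there is no in-paper argument to compare against. Your proof is a faithful reconstruction of Schofield's approach: unwind the composite universal property, stabilise each $\alpha'\in\Sigma'$ so that source and target become induced from finitely generated projectives over $R$, apply Cramer's rule/Malcolmson's normal form to present the stabilised matrix as the Schur complement of a $\Sigma$-invertible matrix over $R$, and show that $\Sigma''=\Sigma^{\bullet}\cup\{\hat\alpha\}$ represents the same functor of $R$-rings. One small imprecision: the conjugating matrices $E,F$ in the Schur-complement identity live over $R_{\Sigma}$ (they contain $A^{-1}$), not over $R$ as you assert; they are therefore not elements of $\Sigma^{\bullet}$, and the step ``$S$ inverts $\Sigma^{\bullet}$, hence $E,F$'' is misplaced. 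This is harmless, since $E,F$ are triangular unipotent over $R_{\Sigma}$ and so become invertible over any $R_{\Sigma}$-ring $S$ automatically, which is all the argument needs.
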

    
    Surprisingly, Schofield developed a computational criterion to recognize universal localizations. The following statement is a slight variation of \cite[Theorem 12.3]{Schofield85} which is more convenient for our purposes.

    \begin{thm}\label{thm: recogn_universal_localization}
        Let $R$ be a left hereditary ring and let $\phi: R \rightarrow S$ be an epic homomorphism from $R$ to a semisimple ring $S$ with $\SMRF$ $\dim$. Assume that $\phi^{\#}(\dim)$ is faithful over the finitely generated projective left $R$-modules. If $\Tor_1^R(S,S) = 0$, then $S = R_{\dim}$.
    \end{thm}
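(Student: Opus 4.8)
The plan is to reduce the statement to Schofield's recognition criterion \cite[Theorem 12.3]{Schofield85}, of which it is a reformulation adapted to a semisimple (rather than simple Artinian) target and phrased via full maps. Write $\dim_R := \phi^{\#}(\dim)$ for the pulled-back $\SMRF$ on $R$, let $\Sigma$ be the set of all full maps between finitely generated projective left $R$-modules with respect to $\dim_R$, so that $R_{\dim} = R_{\Sigma}$ by definition, and let $\Sigma_0$ be the set of maps between finitely generated projective left $R$-modules inverted by $\phi$. Since $\phi$ inverts every map in $\Sigma_0$ it factors as $R \to R_{\Sigma_0} \to S$, and the goal is to show that $\Sigma$ and $\Sigma_0$ determine the same universal localization and that $R_{\Sigma_0} \to S$ is an isomorphism.

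First I would check $\Sigma_0 \subseteq \Sigma$. Pulling $\dim$ back along $R_{\Sigma_0} \to S$ and then along $R \to R_{\Sigma_0}$ recovers $\dim_R$, so \cref{lem: invertiv_is_full}, applied to the universal localization $R_{\Sigma_0}$, says precisely that every element of $\Sigma_0$ is full with respect to $\dim_R$. The reverse inclusion $\Sigma \subseteq \Sigma_0$ is where hereditariness and faithfulness enter. Given a full map $\alpha \colon P \to Q$, left hereditariness of $R$ makes $\ker\alpha \subseteq P$ and $\im\alpha \subseteq Q$ projective, and the sequence $0 \to \ker\alpha \to P \to \im\alpha \to 0$ splits; in particular $\ker\alpha$ and $\im\alpha$ are finitely generated projective. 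The factorization $P \twoheadrightarrow \im\alpha \hookrightarrow Q$, together with the $\SMRF$ axioms and fullness of $\alpha$, forces $\dim_R(\im\alpha) = \dim_R(P) = \dim_R(Q)$, hence $\dim_R(\ker\alpha) = 0$, and faithfulness of $\dim_R$ over finitely generated projective left $R$-modules yields $\ker\alpha = 0$. Thus $\alpha$ is, up to isomorphism, an inclusion $P \hookrightarrow Q$ of finitely generated projectives with $\dim_R(P) = \dim_R(Q)$ and finitely presented cokernel $C$ of projective dimension at most $1$. Applying $S \otimes_R -$ produces the exact sequence of $S$-modules
\[
0 \longrightarrow \Tor_1^R(S,C) \longrightarrow S\otimes_R P \longrightarrow S\otimes_R Q \longrightarrow S\otimes_R C \longrightarrow 0,
\]
and since $\dim$ is exact over the semisimple ring $S$ and $\dim(S\otimes_R P) = \dim(S\otimes_R Q)$, we get $\dim(\Tor_1^R(S,C)) = \dim(S\otimes_R C)$. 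The point is then that $\Tor_1^R(S,S) = 0$, together with epicness of $\phi$ (so $S\otimes_R S \cong S$ by \cref{prop: epic_tensor_prod}) and $\gldim R \le 1$, is exactly what forces $\Tor_1^R(S,C)$ and $S\otimes_R C$ to vanish, so that $S\otimes_R\alpha$ is an isomorphism and $\alpha \in \Sigma_0$.

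With $\Sigma = \Sigma_0$ established, it remains to see $R_{\Sigma_0} \cong S$, and here I would invoke \cite[Theorem 12.3]{Schofield85}: its hypotheses — $\phi$ an epic homomorphism from a left hereditary ring onto $S$, the pulled-back $\SMRF$ faithful over finitely generated projective left $R$-modules, and $\Tor_1^R(S,S) = 0$ — are all in force, and the conclusion is exactly that $S$ is the universal localization of $R$ at the maps it inverts, i.e. $S = R_{\Sigma_0}$. The only discrepancy with the literal statement of \cite[Theorem 12.3]{Schofield85}, where $S$ is taken simple Artinian, is bridged either by noting the argument there never uses simplicity, or by composing $\phi$ with the projections onto the Wedderburn factors of $S$ and reassembling via \cref{prop: local_transitive} and \cref{prop: project_mod_localization}. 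Combining with $\Sigma = \Sigma_0$ then gives $S \cong R_{\Sigma_0} = R_{\Sigma} = R_{\dim}$.

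I expect the main obstacle to be the vanishing of $S\otimes_R C$ in the verification of $\Sigma \subseteq \Sigma_0$ — equivalently, showing the inclusion $P \hookrightarrow Q$ becomes an isomorphism over $S$. This is the single place where the homological hypothesis $\Tor_1^R(S,S) = 0$ really does work, and it is delicate because faithfulness is only granted for $\dim_R$ over finitely generated projective left $R$-modules, not for $\dim$ over $S$-modules, so one cannot simply quote \cref{lem: full_vNr} on the $S$-side; the identity $\dim(\Tor_1^R(S,C)) = \dim(S\otimes_R C)$ and the $\Tor$-vanishing must be combined carefully. Everything else — matching up the two universal localizations, the uniqueness of $R$-ring maps out of them, and the $\SMRF$ bookkeeping via \cref{lem: invertiv_is_full} and the axioms of \cref{def: SMRF} — is formal.
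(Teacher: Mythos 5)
Your overall plan is sensible, but the route you take diverges from the paper's and, as you yourself flag, it has a genuine gap that the paper's argument is specifically engineered to avoid. You introduce $\Sigma$ (all full maps for $\phi^{\#}(\dim)$) and $\Sigma_0$ (all maps inverted by $\phi$), and your strategy is: (a) $\Sigma_0 \subseteq \Sigma$ via \cref{lem: invertiv_is_full}, (b) $\Sigma \subseteq \Sigma_0$ by a homological argument, and (c) Schofield's Theorem 12.3 gives $S = R_{\Sigma_0}$, so $S = R_{\Sigma} = R_{\dim}$. Step (a) is fine. The problem is step (b).

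You reduce (b) to showing that for a full injection $\alpha\colon P\hookrightarrow Q$ with cokernel $C$, the modules $\Tor_1^R(S,C)$ and $S\otimes_R C$ both vanish, and you correctly derive $\dim(\Tor_1^R(S,C)) = \dim(S\otimes_R C)$ from the four-term exact sequence and exactness of $\dim$ on $S$-modules. But this equality, on its own, forces nothing: the hypothesis grants faithfulness only of $\phi^{\#}(\dim)$ over finitely generated projective left $R$-modules, not of $\dim$ over $S$-modules, and indeed faithfulness of $\dim$ over $S$ is not a hypothesis of the theorem but rather a \emph{consequence} of its conclusion (via $S = R_{\dim}$, \cref{prop: project_mod_localization} and \cref{prop: dimG_proj_faithful}-type arguments). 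Trying to use it at this stage is circular. Moreover, the standard homological-epimorphism consequences of $\Tor_1^R(S,S)=0$ (e.g.\ $\Tor_1^R(S,M)=0$ for $S$-modules $M$) do not apply, because $C$ is a finitely presented $R$-module, not an $S$-module; one cannot replace $C$ by $S\otimes_R C$ in the $\Tor_1$ without justification. So the key step of your $\Sigma\subseteq\Sigma_0$ argument is missing, and I do not see how to fill it using only the tools you have assembled.

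The paper sidesteps $\Sigma \subseteq \Sigma_0$ entirely. Instead of invoking Schofield's Theorem 12.3 as a black box to get $S = R_{\Sigma_0}$ and then reconciling $\Sigma_0$ with $\Sigma$, the paper extracts two facts from the \emph{proofs} of Schofield's Theorems 5.3 and 12.3: that $S$ is the universal localization of $R_{\dim}$ at a single idempotent $e\in R_{\dim}$, and that $\phi^{\#}(\dim)$ extends to a faithful $\SMRF$ $\widetilde{\dim}$ on finitely generated projective $R_{\dim}$-modules. It then applies \cref{prop: project_mod_localization} to the left ideal $Q = R_{\dim}(1-e)$ to write $Q\oplus R_{\dim}^n \cong R_{\dim}\otimes_R P$ for a finitely generated projective $R$-module $P$, passes to $S$ to get $S\otimes_R P\cong S^n$ and hence $\widetilde{\dim}(Q)=0$, and invokes faithfulness of $\widetilde{\dim}$ to conclude $Q=0$, $e=1$, and $S=R_{\dim}$. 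This way faithfulness is used exactly where it is available — over finitely generated projective $R_{\dim}$-modules, not over $S$-modules — which is precisely the bottleneck in your version. I would encourage you to rewrite the argument following the idempotent-corner description rather than trying to prove $\Sigma=\Sigma_0$ directly.
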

    \begin{proof}
        From the proof in \cite[Theorem 12.3]{Schofield85} (see also \cite[Theorem 5.3]{Schofield85}), we know that $S$ is the universal localization of $R_{\dim}$ at some idempotent element $e$ and that $\phi^{\#}(\dim)$ extends to a faithful $\SMRF$ $\widetilde{\dim}$ over the finitely generated projective left $R_{\dim}$-modules. Let $Q$ denote the left ideal of $R_{\dim}$ generated by $1-e$. Then, according to \cref{prop: project_mod_localization}, there exists some finitely generated projective left $R$-module $P$ and some non-negative integer $n$ such that
        \[
        Q \oplus R_{\dim}^n \cong R_{\dim} \otimes_R P.
        \]
        But now, if we extend scalars to $S = (R_{\dim})_{\{e\}}$, we get that $S \otimes_R P \cong S^n$. In particular, $\phi^{\#}(\dim)(P) = n$, and hence $\widetilde{\dim}(Q) = 0$. However, $\widetilde{\dim}$ is faithful over finitely generated projective modules. Therefore, $Q=0$ and $e = 1$ as we wanted.
    \end{proof}

\section{Amalgamated free products over finite groups} \label{sec: amalg_free_prod}

    In this section we prove that the strong Atiyah conjecture is closed under amalgamated free products over finite groups. Then, we move on to show the algebraic conjecture using universal localizations. For our first goal, we take inspiration from the work of Mai, Speicher and Yin \cite{MaiSpeicherYinAtiyahFree}, where finite rank operators are used to show the Atiyah conjecture for free groups, and an algebraic reinterpretation of its article done by Jaikin-Zapirain in \cite{Jaikin_ExplicitUFree}. Before going into the proofs, we establish a crucial result for finite rank operators on $\ell^2(G)$. Note that $F(\ell^2(G))$ possesses a right $\C[G]$-module structure given by
    \[
    T\cdot g := R_{g} \circ T\circ R_{g^{-1}}
    \]
    for $T\in F(\ell^2(G))$ and $g\in G$, where $R_g$ stands for the bounded operator defined as right multiplication with $g$. Similarly, we will use $L_g$ for the left multiplication operator.

    Assume that $G$ is the amalgamated free product of $A$ and $B$ over the group $C$, that is, $G = A \ast_C B$. Recall that choosing left transversals for $C$ in $A$ and $B$, both containing $1$, we can write the elements of $G$ in its normal form; we fix such transversals. More concretely, we will write an element $g \in G$ as $g_n \cdots  g_2 g_1$ for $n \geq 1$ where $g_1 \in C$, $g_i$ is an element of either transversal for $C$ in $A$ or $B$ for $i > 1$, and $g_i,g_{i+1}$ belong to different transversals for $i>1$. We shall say that $g$ is of {\it length $n$} and that $g$ {\it ends in} $g_n$.
    
    Now, pick an element $\widetilde{a} \neq 1$ in the left transversal for $C$ in $A$. We are going to construct an operator $D_{\widetilde{a}} \in \mathcal{B}(\ell^2(G))$ that commutes with $R_b$ for all $b \in B$ and whose commutator with $R_a$ where $a \neq 1$ is in the left transversal for $C$ in $A$ equals to the commutator of $R_a$ and $\pr_{C} L_{\widetilde{a}^{-1}}$, where $\pr_{C}$ is just the projection onto $\ell^2(C)$. These conditions will become apparent in the proof of \cref{lem: amalg_prod_intersection_augment_ideal}. We define $D_{\widetilde{a}}$ on the natural basis of $\ell^2(G)$: given $g\in G$, $D_{\widetilde{a}}$ sends $g$ to $\widetilde{a}^{-1}g$ if the normal form of $g$ ends in $\widetilde{a}$, and to $0$ otherwise. We state a generalization of \cite[Proposition 6.3]{MaiSpeicherYinAtiyahFree} in our setting.

    \begin{lem}\label{lem: proj_operator_amal_product}
        With the above notation, the following holds:
        \begin{enumerate}[label=(\roman*)]
            \item $D_{\widetilde{a}} - R_{b^{-1}} D_{\widetilde{a}} R_{b} = 0$, for $b \in B$; and
            \item $D_{\widetilde{a}} - R_{a^{-1}} D_{\widetilde{a}} R_{a} = \pr_{C} L_{\widetilde{a}^{-1}} - R_{a^{-1}} \pr_{C} L_{\widetilde{a}^{-1}} R_{a}$, for $a \neq 1$ in the left transversal for $C$ in $A$.
        \end{enumerate}
    \end{lem}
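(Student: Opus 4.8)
The plan is to recast both identities as statements about how right translation moves a handful of subsets of $G$ defined through normal forms, and then to verify those statements by inspecting reduced words.

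Fix the left transversals for $C$ in $A$ and in $B$ used to write normal forms, and set
\[
V_B := C \cup \{\,g\in G : \text{the reduced word of } g \text{ is } g_n\cdots g_1 \text{ with } n\geq 2 \text{ and } g_n \text{ in the transversal of } C \text{ in } B\,\},
\]
and define $V_A$ symmetrically, so that $G = V_A\cup V_B$ and $V_A\cap V_B = C$. A direct check on normal forms shows that the reduced word of $g$ begins with $\widetilde a$ if and only if $\widetilde a^{-1}g\in V_B$; hence $D_{\widetilde a} = \pr_{V_B}\,L_{\widetilde a^{-1}}$, where $\pr_S$ denotes the orthogonal projection of $\ell^2(G)$ onto $\ell^2(S)$. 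Since all operators involved are bounded, it suffices to evaluate the two asserted identities on the standard basis $\{g\}_{g\in G}$. Using that left and right multiplication operators commute and that $R_{x^{-1}}\,\pr_S\,R_x = \pr_{Sx^{-1}}$ for any $S\subseteq G$ and $x\in G$, one obtains
\[
D_{\widetilde a} - R_{x^{-1}}D_{\widetilde a}R_x = \bigl(\pr_{V_B} - \pr_{V_Bx^{-1}}\bigr)L_{\widetilde a^{-1}},
\]
and likewise $\pr_C L_{\widetilde a^{-1}} - R_{x^{-1}}\pr_C L_{\widetilde a^{-1}}R_x = (\pr_C - \pr_{Cx^{-1}})L_{\widetilde a^{-1}}$. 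Since $L_{\widetilde a^{-1}}$ is unitary, \emph{(i)} becomes $V_B b = V_B$ for every $b\in B$, while \emph{(ii)} becomes the identity of characteristic functions $\chi_{V_B} - \chi_{V_Ba^{-1}} = \chi_C - \chi_{Ca^{-1}}$ on $G$; using $\chi_{Sa^{-1}}(h)=\chi_S(ha)$, the latter reads $\chi_{V_B}(h)+\chi_C(ha) = \chi_C(h)+\chi_{V_B}(ha)$ for all $h\in G$.

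It remains to prove this combinatorial core. For \emph{(i)}, since $Cb\subseteq B\subseteq V_B$, it suffices to show $(V_B\setminus C)b\subseteq V_B$ for $b\in B$ and then apply this with $b^{-1}$ in place of $b$. For \emph{(ii)}, it suffices to establish, for $a\neq 1$ in the transversal of $C$ in $A$, the three containments
\[
Ca\subseteq V_A\setminus C,\qquad (V_B\setminus C)a\subseteq V_B\setminus C,\qquad (V_A\setminus C)a\subseteq V_A ;
\]
together with $V_A\cap V_B = C$ and $C\subseteq V_B$, the three cases $h\in C$, $h\in V_B\setminus C$ and $h\in V_A\setminus C$ of the displayed function identity then follow mechanically. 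All of these containments are instances of one phenomenon: right multiplication by a group element alters the reduced word only from its right end, so the leftmost syllable is touched only through a chain of cancellations, and such a chain shortens the word by at most one syllable and never changes the transversal group ($A$ or $B$) of the leftmost syllable --- unless the whole word collapses into $C$.

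The step I expect to be the main obstacle is precisely this bookkeeping. One must track how multiplication by $x$ merges with the rightmost syllable $g_1\in C$; note that once a cancellation of the form $g_2 g_1 x\in C$ occurs, the next syllable $g_3$ lies in the transversal \emph{opposite} to $x$ and so does not merge with an element of $C$, which is what makes the cancellation stop after lowering the length by at most one; and finally one must treat separately the reduced words of length $1$ and $2$, for which the leftmost syllable can genuinely be destroyed and the word can collapse into $C$. Those collapsing short words are exactly the source of the discrepancy recorded by the correction term $\pr_C L_{\widetilde a^{-1}}$ in \emph{(ii)}.
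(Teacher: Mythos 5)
Your argument is correct, and its organization is genuinely different from the paper's. The paper verifies both identities directly on the group basis of $\ell^2(G)$ by an ad hoc case analysis on the length and leftmost syllable of the normal form of $g$; the content is that right multiplication only disturbs the right end of a reduced word and shortens it by at most one syllable. You instead identify the factorization $D_{\widetilde{a}} = \pr_{V_B} L_{\widetilde{a}^{-1}}$, which, together with $R_{x^{-1}}\pr_S R_x = \pr_{Sx^{-1}}$, the commutation of left and right translations, and the invertibility of $L_{\widetilde{a}^{-1}}$, reduces the two identities to purely set-theoretic statements: $V_B b = V_B$ for every $b\in B$ for \emph{(i)}, and $\chi_{V_B}-\chi_{V_B a^{-1}}=\chi_C - \chi_{C a^{-1}}$ for \emph{(ii)}. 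This reformulation is a real gain: it isolates the combinatorial core that the paper handles only implicitly, and it makes transparent why the correction term $\pr_C L_{\widetilde{a}^{-1}}$ is exactly right, namely it accounts for the short words (length at most two beginning with an $A$-syllable) that collapse into $C$ under right multiplication by $a$. The three containments you list, and the claim that a cancellation chain stops after one step and cannot change the transversal ($A$ or $B$) of the surviving leftmost syllable unless the word collapses entirely into $C$, are all correct; the remaining bookkeeping (for example, that $(V_B\setminus C)a\subseteq V_B\setminus C$ with no collapse into $C$, because a length-two word in $V_B\setminus C$ has its leftmost syllable in the $B$-transversal and thus cannot be absorbed by $a\in A$) is routine and works as you anticipate.
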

    \begin{proof}
        To show these equalities of bounded operators, it suffices to know how they act on the basis of $\ell^2(G)$ given by the elements of the group. For {\it (i)} note that $D_{\widetilde{a}} R_{b} = R_{b} D_{\widetilde{a}}$. Indeed, if the normal form of $g$ ends in $b \in B$, then the normal form of $gb$ also ends in some element in $B$. On the other hand, whenever the normal form of $g$ ends in an element of the left transversal for $C$ in $A$ different from $1$, then the normal form of $gb$ ends in the same element, and hence $D_{\widetilde{a}} R_{b}(g)=R_{b} D_{\widetilde{a}}(g)$.

        For {\it (ii)} observe that if the normal form of $g$ is of length at least $3$ or if it ends in an element of the left transversal for $C$ in $B$ different from $1$, then the normal form of $ga$ ends in the same element, and hence $(D_{\widetilde{a}} -R_{a^{-1}} D_{\widetilde{a}} R_{a})(g) = 0$. But the right-hand side operator also vanishes at $g$, since both $g$ and $ga$ are either of length at least $3$ or end in an element of $B$, and $\pr_C$ is non-zero only for elements of $C$, which are of length $1$. Thus, it rests to consider $g \in A$. 
        
        First, $D_{\widetilde{a}}(g) \neq 0$ if and only if $g = \widetilde{a} c$ for some $c\in C$. But in this case, $R_{a^{-1}} D_{\widetilde{a}} R_{a}(g) = 0$ because $g a\neq \widetilde{a} c'$ for all $c'\in C$ since otherwise $a$ would belong to $C$, which is a contradiction. Second, $R_{a^{-1}} D_{\widetilde{a}} R_{a}(g) \neq 0$ if and only if $g = \widetilde{a}ca^{-1}$ for some $c\in C$, and in this situation $D_{\widetilde{a}}(g) = 0$ since $a$ is not in $C$. The same argument applies to the right-hand side operator. Finally, observe that
        \[
        D_{\widetilde{a}}(\widetilde{a}c) = c = \pr_{C} L_{\widetilde{a}^{-1}}(\widetilde{a}c)
        \]
        and
        \[
        R_{a^{-1}} D_{\widetilde{a}} R_{a}(\widetilde{a}ca^{-1}) = ca^{-1} = R_{a^{-1}} \pr_{C} L_{\widetilde{a}^{-1}} R_{a}(\widetilde{a}ca^{-1}).
        \]
    \end{proof}

    We are ready to show our key lemma. The proof arises from the one given in \cite[Proposition 6.2]{MaiSpeicherYinAtiyahFree}. We denote by $I_{\scalebox{0.8}{$\C[G]$}}$ the \textit{augmentation ideal} of the group algebra $\C[G]$. Recall that action of $G$ on $F(\ell^2(G))$ is given by $T\cdot g := R_{g} \circ T\circ R_{g^{-1}}$ for $T\in F(\ell^2(G))$ and $g\in G$.

    \begin{lem}\label{lem: amalg_prod_intersection_augment_ideal}
        Let $A$ and $B$ be two groups, $C$ a finite group and let $G=A \ast_C B$ be their amalgamated free product. Then the intersection of the $\C$-vector spaces $F(\ell^2(G))\cdot I_{\scalebox{0.8}{$\C[A]$}}$ and $F(\ell^2(G))\cdot I_{\scalebox{0.8}{$\C[B]$}}$ inside $F(\ell^2(G))$ is precisely $F(\ell^2(G))\cdot I_{\scalebox{0.8}{$\C[C]$}}$.
    \end{lem}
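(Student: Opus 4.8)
The inclusion $F(\ell^2(G))\cdot I_{\C[C]}\subseteq F(\ell^2(G))\cdot I_{\C[A]}\cap F(\ell^2(G))\cdot I_{\C[B]}$ is clear, since $C\leqslant A$ and $C\leqslant B$ give $I_{\C[C]}\subseteq I_{\C[A]}\cap I_{\C[B]}$. For the reverse inclusion, the plan is to pair any $T$ in the intersection against the operators $D_{\widetilde a}$ of \cref{lem: proj_operator_amal_product} and deduce that $T$ kills the idempotent $e_C:=\tfrac1{\verti C}\sum_{c\in C}c\in\C[C]$ under the module action, i.e.\ $T\cdot e_C=0$; since $1-e_C\in I_{\C[C]}$, this yields $T=T\cdot(1-e_C)\in F(\ell^2(G))\cdot I_{\C[C]}$. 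We may assume $C$ is properly contained in both $A$ and $B$, as otherwise $I_{\C[A]}$ or $I_{\C[B]}$ already equals $I_{\C[C]}$; fix a left transversal $\mathcal T$ of $C$ in $A$ with $1\in\mathcal T$.

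Two elementary observations will be used repeatedly. First, because left and right translations on $\ell^2(G)$ commute, $L_z\bigl(R_aSR_{a\inv}-S\bigr)L_x=R_a(L_zSL_x)R_{a\inv}-L_zSL_x$ for all $z,x\in G$, $a\in A$ and finite rank $S$; hence $S\mapsto L_zSL_x$ maps $F(\ell^2(G))\cdot I_{\C[A]}$ into itself, and likewise for $B$, so the whole intersection is stable under all these maps. Second, since every $R_c$ with $c\in C$ commutes with every $L_g$ and preserves $\ell^2(C)$, it commutes with $\pr_C$; using the cyclicity of the trace on finite rank operators one gets $\trace\bigl(\pr_C L_g\cdot S'\cdot(c-1)\bigr)=0$ for every $g\in G$, $c\in C$ and finite rank $S'$, and the same argument with \cref{lem: proj_operator_amal_product}(i) in place of the $\pr_C$–$R_c$ commutation gives $\trace\bigl(D_{\widetilde a}\cdot S'\cdot(c-1)\bigr)=0$ since $c\in C\leqslant B$.

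The core step is the identity $\trace(D_{\widetilde a}T)=\trace(\pr_C L_{\widetilde a\inv}T)$, valid for $\widetilde a\in\mathcal T\setminus\{1\}$ and any $T\in F(\ell^2(G))\cdot I_{\C[A]}$. To prove it I would write $T=\sum_j S_j\cdot(a_j-1)$ with $S_j$ finite rank, decompose each $a_j=\widetilde a_jc_j$ along $\mathcal T$, and split $S_j\cdot(a_j-1)=(S_j\cdot\widetilde a_j)\cdot(c_j-1)+S_j\cdot(\widetilde a_j-1)$; the first summand contributes $0$ to both sides by the second observation, while for the second (with $\widetilde a_j\neq1$), \cref{lem: proj_operator_amal_product}(ii) gives $R_{\widetilde a_j\inv}D_{\widetilde a}R_{\widetilde a_j}-D_{\widetilde a}=R_{\widetilde a_j\inv}\pr_CL_{\widetilde a\inv}R_{\widetilde a_j}-\pr_CL_{\widetilde a\inv}$, so cyclicity of the trace turns $\trace\bigl((R_{\widetilde a_j\inv}D_{\widetilde a}R_{\widetilde a_j}-D_{\widetilde a})S_j\bigr)$ into $\trace\bigl(\pr_CL_{\widetilde a\inv}\cdot S_j\cdot(\widetilde a_j-1)\bigr)$. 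Summing over $j$ gives the identity. On the other hand, if $T\in F(\ell^2(G))\cdot I_{\C[B]}$, writing $T=\sum_i S_i'\cdot(b_i-1)$ and combining cyclicity with \cref{lem: proj_operator_amal_product}(i) shows $\trace(D_{\widetilde a}T)=0$. Therefore $\trace(\pr_CL_{\widetilde a\inv}T)=0$ for every $T$ in the intersection and every $\widetilde a\in\mathcal T\setminus\{1\}$.

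To conclude, I would feed the operators $L_zTL_x$ — which lie in the intersection by the first observation — into this vanishing statement. Using $L_{\widetilde a\inv}L_z=L_{\widetilde a\inv z}$ and letting $z$ run over $G$, we get $\trace(\pr_CL_wTL_x)=0$ for all $w,x\in G$. Expanding over the standard basis, $\trace(\pr_CL_wTL_x)=\sum_{c\in C}\langle T(xc),w\inv c\rangle=\bigl\langle\bigl(\sum_{c\in C}R_{c\inv}TR_c\bigr)x,\,w\inv\bigr\rangle$, and letting $x$ and $w\inv$ range over $G$ forces $\sum_{c\in C}R_{c\inv}TR_c=0$, that is $T\cdot e_C=0$, finishing the proof. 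I expect the main obstacle to be the trace bookkeeping in the core identity: one must split each augmentation generator $a_j-1$ through the transversal, keep straight that the two kinds of error terms vanish for two genuinely different reasons (the $C\leqslant B$ invariance of $D_{\widetilde a}$ versus the $\pr_C$–$R_c$ commutation), and apply the correct clause of \cref{lem: proj_operator_amal_product} to the correct syllable; by contrast the passage to $L_zTL_x$ and the final reduction from $T\cdot e_C=0$ to membership in $F(\ell^2(G))\cdot I_{\C[C]}$ are formal.
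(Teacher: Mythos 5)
Your proof is correct and follows essentially the same route as the paper: in both cases the crux is to pair $T$ against $D_{\widetilde a}$ via the trace, use \cref{lem: proj_operator_amal_product}(i) to make the $B$-decomposition of $T$ contribute zero and \cref{lem: proj_operator_amal_product}(ii) to turn the $A$-decomposition into a trace involving $\pr_C L_{\widetilde a^{-1}}$, then let the auxiliary left multiplications range over all of $G$ to force $\sum_{c\in C} T\cdot c = 0$ (your $T\cdot e_C = 0$), and finally conclude $T = \tfrac{1}{|C|}\sum_{c\in C} T\cdot(1-c)\in F(\ell^2(G))\cdot I_{\C[C]}$. The only differences from the paper's writeup are organizational: you derive the bare identity $\trace(D_{\widetilde a}T)=\trace(\pr_C L_{\widetilde a^{-1}}T)$ first and then feed in $L_zTL_x$ (justified by your first observation that the intersection is stable under $S\mapsto L_zSL_x$), whereas the paper inserts $L_{g_1}$ and $L_{g_2}$ into the test operator from the start and so never needs that stability remark; and you dispose of the $C$-part of each $a_j=\widetilde a_jc_j$ inside the trace via your second observation, whereas the paper normalizes the $a_j$ to be nontrivial transversal elements before computing, absorbing the $(1-c)$ pieces into $F(\ell^2(G))\cdot I_{\C[C]}$ up front. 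Both are equivalent bookkeeping choices.
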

    \begin{proof}
        The inclusion $F(\ell^2(G))\cdot I_{\scalebox{0.8}{$\C[C]$}} \subseteq F(\ell^2(G))\cdot I_{\scalebox{0.8}{$\C[A]$}} \cap F(\ell^2(G))\cdot I_{\scalebox{0.8}{$\C[B]$}}$ is obvious. For the other direction, let $T\in F(\ell^2(G))\cdot I_{\scalebox{0.8}{$\C[A]$}} \cap F(\ell^2(G))\cdot I_{\scalebox{0.8}{$\C[B]$}}$. Then $T = \sum_{j=1}^n T_{j,A} \cdot (1-a_j)$ and $T = \sum_{i=1}^l T_{i,B} \cdot (1-b_i)$, where $T_{j,A}$ and $T_{i,B}$ take the form
        \[
        T_{j,A}(x) = \alpha_j\langle x, u_j \rangle v_j \quad \mbox{and} \quad T_{i,B}(x) = \beta_i\langle x, w_i \rangle z_i
        \]
        with $x\in \ell^2(G)$, $a_j\in A$, $\alpha_j\in \C$ and $u_j,v_j\in \ell^2(G)$ for all $j=1,\ldots, n$, and $b_i\in B$, $\beta_i \in \C$ and $w_i,z_i \in \ell^2(G)$ for all $i=1,\ldots, l$. We fix the same left transversals for $C$ in $A$ and $B$ as before, both containing $1$, in order to write the elements of $G$ in their normal form. Without loss of generality, we can assume that the elements $a_j$ are different from $1$ and belong to the left transversal for $C$ in $A$. Indeed, if $a_j = 1$ then $1 - a_j = 0$, and if $a_j = a_j'c$ for $a_j'$ in the left transversal, then
        \[
        T_{j,A} \cdot (1 - a_j) = T_{j,A} \cdot a_j'(1 -c) + T_{j,A} \cdot (1 - a_j')
        \]
        and $(T_{j,A} \cdot a_j') \cdot (1 -c)$ already lies in $F(\ell^2(G))\cdot I_{\scalebox{0.8}{$\C[C]$}}$. Note that $\sum_{a \in A} \langle T(a), a \rangle$ is finite by the absolute convergence of the trace (see \cref{subsec: finite_rank_op}). Moreover, the sum is zero since for each $j = 1, \ldots, n$, it holds that
        \begin{align*}
            \sum_{a \in A} \langle T_{j,A}(a), a \rangle &= \sum_{a \in A} \langle T_{j,A}(a_j^{-1} a), a_j^{-1} a \rangle = \sum_{a \in A} \langle R_{a_j} T_{j,A} R_{a_j^{-1}} (a), a \rangle \\
            & = \sum_{a \in A} \langle (T_{j,A}\cdot a_j)(a), a \rangle.
        \end{align*}
        Similarly, $\sum_{b \in B} \langle T(b), b \rangle$ is zero. We shall use the operator $D_{\widetilde{a}}$ to produce a zero sum running over $C$. From this, along with the fact that $C$ is finite, it will follow that $T \in F(\ell^2(G))\cdot I_{\scalebox{0.8}{$\C[C]$}}$. 
        
        Let $L_{g_1}$ and $L_{g_2}$ be left multiplication operators for arbitrary elements $g_1$ and $g_2$ of $G$. Note that for any $g\in G$ and $S\in F(\ell^2(G))$ it holds
        \[
        \trace(S\cdot (1-g)L_{g_1}D_{\widetilde{a}}L_{g_2})=\trace(SL_{g_1}D_{\widetilde{a}}L_{g_2})-\trace(R_{g}SR_{g^{-1}}L_{g_1}D_{\widetilde{a}}L_{g_2}).
        \]
        Moreover, since left and right multiplication commute, taking the trace over the standard basis of $\ell^2(G)$
        \[
        \trace(R_{g} S R_{g^{-1}} L_{g_1} D_{\widetilde{a}} L_{g_2}) = \trace(SL_{g_1}R_{g^{-1}}D_{\widetilde{a}}L_{g_2}R_{g})=\trace(SL_{g_1}R_{g^{-1}}D_{\widetilde{a}}R_{g}L_{g_2})
        \]
        Hence,
        \[
        \trace(S\cdot (1-g)L_{g_1}D_{\widetilde{a}}L_{g_2})=\trace(SL_{g_1}(D_{\widetilde{a}}-R_{g^{-1}}D_{\widetilde{a}}R_{g})L_{g_2}).
        \]
        Recall that by assumption $T= \sum_{i=1}^l T_{i,B} \cdot (1-b_i)$. Thus, in view of the first part of \cref{lem: proj_operator_amal_product}, we have that
        \[
        \trace(TL_{g_1}D_{\widetilde{a}}L_{g_2}) = \sum_{i=1}^l \trace(T_{i,B} \cdot (1-b_i)L_{g_1}D_{\widetilde{a}}L_{g_2}) = 0
        \]
        Alternatively, $T= \sum_{j=1}^n T_{j,A} \cdot (1-a_j)$. So the second part of \cref{lem: proj_operator_amal_product} yields
        \begin{align*}
            \trace(TL_{g_1}D_{\widetilde{a}}L_{g_2}) & = \sum_{j=1}^n \trace(T_{j,A} L_{g_1} (\pr_{C} L_{\widetilde{a}^{-1}} - R_{a_j^{-1}} \pr_{C} L_{\widetilde{a}^{-1}} R_{a_j}) L_{g_2}) \\ 
            &= \trace\left(\sum_{j=1}^n T_{j,A}\cdot (1-a_j) L_{g_1} \pr_{C} L_{\widetilde{a}^{-1}} L_{g_2} \right)\\
            &= \trace\left(L_{g_2} T L_{g_1} \pr_{C} L_{\widetilde{a}^{-1}} \right)
        \end{align*}
        where once again the trace was taken over the standard basis of $\ell^2(G)$. To sum up, we get that $\trace\left(L_{g_2} T L_{g_1} \pr_{C} L_{\widetilde{a}^{-1}} \right) = 0$. Now, it holds that
        \begin{align*}
            \sum_{g\in G} \langle L_{g_2} T L_{g_1} \pr_{C} L_{\widetilde{a}^{-1}}(g),g\rangle &= \sum_{g\in G} \langle T L_{g_1} \pr_{C} L_{\widetilde{a}^{-1}}(g),L_{g_2}^*(g)\rangle\\
            &=\sum_{c\in C} \langle T L_{g_1}(c),L_{g_2^{-1}}(\widetilde{a}c)\rangle.
        \end{align*}
        Furthermore, $L_{g_1}(c) = R_{c} L_{g_1}(1)$ and $L_{g_2^{-1}}(c) = R_{c} L_{g_2^{-1}}(1)$. Not only that, $C$ is finite, and hence it makes sense to consider the operator $\sum_{c\in C} T\cdot c$. Thus putting all together
        \begin{align*}
            0 = \sum_{c\in C} \langle T L_{g_1}(c),L_{g_2^{-1}}(\widetilde{a}c)\rangle & = \sum_{c\in C} \langle T R_{c} L_{g_1}(1), R_{c} L_{g_2^{-1}}(\widetilde{a})\rangle \\
            &= \sum_{c\in C} \langle R_{c^{-1}} T R_{c} L_{g_1}(1), L_{g_2^{-1}}(\widetilde{a})\rangle \\
            & = \langle (\sum_{c\in C} T\cdot c) L_{g_1}(1), L_{g_2^{-1}}(\widetilde{a}) \rangle.
        \end{align*}
        Since $g_1$ and $g_2$ are arbitrary elements in $G$, we conclude that $\sum_{c\in C} T\cdot c$ equals to zero. In other words, $T = -\sum_{c\in C\setminus\{1\}} T\cdot c$. Therefore
        \[
        T = \frac{1}{|C|} \sum_{c\in C\setminus\{1\}} T\cdot (1-c) \in F(\ell^2(G))\cdot I_{\scalebox{0.8}{$\C[C]$}}
        \]
        as we wanted.
    \end{proof}

    Let us set up our framework for the rest of the section. The following is also valid for a subfield $K \subseteq \C$ closed under complex conjugation, though we restrict to $\C$ for the sake of clarity. Consider a group $G$ that is the amalgamated free product of two groups $A$ and $B$ over $C$. We denote by $\mathcal{R}_{\scalebox{0.8}{$\C[A]$}}$, $\mathcal{R}_{\scalebox{0.8}{$\C[B]$}}$ and $\mathcal{R}_{\scalebox{0.8}{$\C[C]$}}$ the $\ast$-regular closure of $\C[A]$, $\C[B]$ and $\C[C]$ in $\UO(A)$, $\UO(B)$ and $\UO(C)$, respectively. Recall that $\mathcal{R}_{\scalebox{0.8}{$\C[C]$}} \subseteq \mathcal{R}_{\scalebox{0.8}{$\C[A]$}}$ and $\mathcal{R}_{\scalebox{0.8}{$\C[C]$}} \subseteq \mathcal{R}_{\scalebox{0.8}{$\C[B]$}}$ (see, for instance, \cite[Lemma 4.1.10]{DLopezAlvarezThesis}). We define the coproduct ring $R_{G}:=\mathcal{R}_{\scalebox{0.8}{$\C[A]$}} \ast_{\scalebox{0.8}{$\mathcal{R}_{\scalebox{0.8}{$\C[C]$}}$}} \mathcal{R}_{\scalebox{0.8}{$\C[B]$}}$. Note that $\C[G]$ sits naturally inside $R_G$. Moreover, according to the universal property of coproducts, there is a ring homomorphism $\phi: R_G\rightarrow \UO(G)$ induced by the natural inclusions $\mathcal{R}_{\scalebox{0.8}{$\C[A]$}}\subseteq \UO(A)\subseteq \UO(G)$ and $\mathcal{R}_{\scalebox{0.8}{$\C[B]$}}\subseteq \UO(B)\subseteq \UO(G)$ (cf. \cite[Eq. (8.12)]{Luck02}). So we have the following commutative diagram of ring homomorphisms
    \[
    \begin{tikzcd}
        R_G \arrow[rr, "\phi"]&  & \UO(G)\\
        & \C[G] \arrow[ur, hook] \arrow[lu, hook']& \nospacepunct{.} 
    \end{tikzcd}
    \]
    Note that the image of $\phi$ is contained in $\mathcal{R}_{\scalebox{0.8}{$\C[G]$}}$. Recall also that $\phi$ induces an $\SMRF$ on $R_G$. For the next result, we use the interpretation of finite rank operators given in the proof of \cite[Theorem 1.2]{Jaikin_ExplicitUFree}. 

    \begin{prop}\label{prop: vanishing_Tor1_amalg}
        Let $A$ and $B$ be two groups, $C$ a finite group and let $G=A \ast_C B$ be their amalgamated free product. With the above notation, it holds that
        \[
        \Tor_1^{R_{G}}(\UO(G),\UO(G)) = 0.
        \]
        In particular, $\Tor_1^{R_{G}}(\mathcal{R}_{\scalebox{0.8}{$\C[G]$}}, \mathcal{R}_{\scalebox{0.8}{$\C[G]$}}) = 0$.
    \end{prop}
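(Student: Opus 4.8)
The plan is to compute $\Tor_1^{R_G}(\UO(G),\UO(G))$ using a free resolution of $\UO(G)$ as a left $R_G$-module adapted to the coproduct structure, and then to identify the first homology with the intersection of augmentation-type ideals controlled by \cref{lem: amalg_prod_intersection_augment_ideal}. Concretely, since $R_G = \mathcal{R}_{\scalebox{0.8}{$\C[A]$}} \ast_{\scalebox{0.8}{$\mathcal{R}_{\scalebox{0.8}{$\C[C]$}}$}} \mathcal{R}_{\scalebox{0.8}{$\C[B]$}}$ is a coproduct (pushout) of rings, there is a Mayer--Vietoris-type short exact sequence of $R_G$-bimodules
\[
0 \longrightarrow R_G \otimes_{\mathcal{R}_{\scalebox{0.8}{$\C[C]$}}} R_G \longrightarrow \bigl(R_G \otimes_{\mathcal{R}_{\scalebox{0.8}{$\C[A]$}}} R_G\bigr) \oplus \bigl(R_G \otimes_{\mathcal{R}_{\scalebox{0.8}{$\C[B]$}}} R_G\bigr) \longrightarrow R_G \otimes_{R_G} R_G = R_G \longrightarrow 0,
\]
where the first map is the difference of the two natural maps and the second is the sum; this is the standard presentation of a ring coproduct and it is exact because the edge ring $\mathcal{R}_{\scalebox{0.8}{$\C[C]$}}$ is semisimple (it is the $\ast$-regular closure of $\C[C]$ with $C$ finite, hence $\mathcal{R}_{\scalebox{0.8}{$\C[C]$}}=\C[C]$ is semisimple), so all the ring maps are pure and the sequence splits appropriately on each side. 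First I would apply $\UO(G) \otimes_{R_G} - \otimes_{R_G} \UO(G)$ to this sequence. Because $\mathcal{R}_{\scalebox{0.8}{$\C[C]$}}$, $\mathcal{R}_{\scalebox{0.8}{$\C[A]$}}$, $\mathcal{R}_{\scalebox{0.8}{$\C[B]$}}$ are all von Neumann regular, every module over them is flat, so $\Tor_i^{R_G}(\UO(G),\UO(G))$ for $i\geq 1$ vanishes against the induced terms $R_G\otimes_{\mathcal{R}_{\scalebox{0.8}{$\C[?]$}}}R_G$ in the range that matters; the long exact $\Tor$ sequence then collapses to give $\Tor_1^{R_G}(\UO(G),\UO(G))$ as the kernel of a map
\[
\UO(G)\otimes_{\mathcal{R}_{\scalebox{0.8}{$\C[C]$}}}\UO(G) \longrightarrow \bigl(\UO(G)\otimes_{\mathcal{R}_{\scalebox{0.8}{$\C[A]$}}}\UO(G)\bigr) \oplus \bigl(\UO(G)\otimes_{\mathcal{R}_{\scalebox{0.8}{$\C[B]$}}}\UO(G)\bigr),
\]
and dually expresses it as a comparison of images. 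Chasing through the identifications $\UO(G)\otimes_{\mathcal{R}_{\scalebox{0.8}{$\C[?]$}}}\UO(G) \cong \UO(G) \otimes_{\mathcal{R}_{\scalebox{0.8}{$\C[?]$}}}\UO(G)$ and using the multiplication maps, $\Tor_1$ becomes identified with the intersection inside $\UO(G)\otimes_{\mathbb{C}[G]}\UO(G)$ (or a suitable finite-rank-operator model thereof) of the two subspaces coming from the relative augmentation ideals of $\mathcal{R}_{\scalebox{0.8}{$\C[A]$}}$ and $\mathcal{R}_{\scalebox{0.8}{$\C[B]$}}$, modulo the one coming from $\mathcal{R}_{\scalebox{0.8}{$\C[C]$}}$ — which is exactly the situation \cref{lem: amalg_prod_intersection_augment_ideal} addresses after translating from $R$-module language into the finite-rank-operator picture of \cite[Theorem 1.2]{Jaikin_ExplicitUFree}.

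The technical heart is therefore the translation step: one must realize $\UO(G) \otimes_{\scalebox{0.8}{$\C[G]$}}\UO(G)$, or more precisely the relevant relative tensor products, inside $F(\ell^2(G))$ (the finite rank operators) so that the subspace $\UO(G)\otimes_{\mathcal{R}_{\scalebox{0.8}{$\C[A]$}}}\UO(G)$ corresponds to $F(\ell^2(G))\cdot I_{\scalebox{0.8}{$\C[A]$}}$ and similarly for $B$ and $C$. This is the point of view Jaikin-Zapirain uses in \cite{Jaikin_ExplicitUFree}: the kernel of the multiplication map $\UO(G)\otimes_{\scalebox{0.8}{$\C[G]$}}\UO(G)\to\UO(G)$ is generated by the augmentation ideal, and finite rank operators provide a concrete model in which "tensoring down over $\mathcal{R}_{\scalebox{0.8}{$\C[A]$}}$ rather than $\C[A]$" amounts to quotienting by the part of the augmentation ideal supported on $A$. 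Once this dictionary is in place, \cref{lem: amalg_prod_intersection_augment_ideal} says the two subspaces intersect exactly in the $C$-part, but the $C$-part is precisely the image of the $\mathcal{R}_{\scalebox{0.8}{$\C[C]$}}$ term, which is killed in the Mayer--Vietoris sequence; hence the kernel computing $\Tor_1$ is zero.

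I expect the main obstacle to be making the correspondence between the homological $\Tor_1$ and the finite-rank-operator intersection completely rigorous — in particular, verifying that the Mayer--Vietoris sequence for the ring coproduct behaves correctly after base change to $\UO(G)$ (this uses flatness of everything over the semisimple edge ring $\mathcal{R}_{\scalebox{0.8}{$\C[C]$}}$, which is genuine but needs care since $\UO(G)$ is only flat over these $\ast$-regular closures, not over $\C[G]$), and that the finite-rank model faithfully detects the tensor products in question. The final sentence, $\Tor_1^{R_G}(\mathcal{R}_{\scalebox{0.8}{$\C[G]$}},\mathcal{R}_{\scalebox{0.8}{$\C[G]$}})=0$, follows from the $\UO(G)$ statement by a standard argument: $\mathcal{R}_{\scalebox{0.8}{$\C[G]$}}$ is a pure (indeed flat, by von Neumann regularity) $R_G$-submodule of $\UO(G)$, so $\Tor_1^{R_G}(\mathcal{R}_{\scalebox{0.8}{$\C[G]$}},\mathcal{R}_{\scalebox{0.8}{$\C[G]$}})$ injects into $\Tor_1^{R_G}(\UO(G),\UO(G))=0$; alternatively one invokes the epicness of $\C[G]\hookrightarrow\mathcal{R}_{\scalebox{0.8}{$\C[G]$}}$ together with \cref{prop: epic_tensor_prod} to reduce the two vanishing statements to each other.
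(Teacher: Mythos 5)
Your architecture matches the paper's: a Mayer--Vietoris sequence for the coproduct $R_G$ (the paper cites Dicks' long exact $\Tor$ sequence; your bimodule short exact sequence encodes the same information) reduces the vanishing of $\Tor_1^{R_G}(\UO(G),\UO(G))$ to injectivity of the diagonal map $\UO(G)\otimes_{\mathcal{R}_{\C[C]}}\UO(G) \to \UO(G)\otimes_{\mathcal{R}_{\C[A]}}\UO(G) \oplus \UO(G)\otimes_{\mathcal{R}_{\C[B]}}\UO(G)$, which is then read off from \cref{lem: amalg_prod_intersection_augment_ideal} via the finite-rank-operator model. However, there is a genuine gap in the translation step, which you flag but do not resolve, and you misdiagnose the flatness that makes it work. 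The model $F(\ell^2(G))\cong\ell^2(G)\otimes_{\C}\ell^2(G)$ detects tensor products of $\ell^2(G)$ over the group algebras $\C[C],\C[A],\C[B]$, not of $\UO(G)$ over the $\ast$-regular closures; the nontrivial content is precisely the reduction from one to the other. The paper does it in two moves: first, epicness of $\C[?]\hookrightarrow\mathcal{R}_{\C[?]}$ (via \cref{prop: epic_tensor_prod}) replaces $\otimes_{\mathcal{R}_{\C[?]}}$ by $\otimes_{\C[?]}$; second --- the idea you are missing --- the $\UO(G)$-$\C[G]$-bimodule isomorphisms $\UO(G)\cong\UO(G)\otimes_{\VN(G)}\ell^2(G)$ and $\UO(G)\cong\ell^2(G)\otimes_{\VN(G)}\UO(G)$, together with flatness of $\UO(G)$ over $\VN(G)$, let you strip an $\UO(G)$ factor off each side and land on $\ell^2(G)\otimes_{\C[?]}\ell^2(G)$ without losing injectivity. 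Flatness over the edge ring $\mathcal{R}_{\C[C]}$, which is what you invoke, is the right ingredient for the Mayer--Vietoris step but is irrelevant to this reduction.

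For the last sentence, the paper reruns Dicks' sequence for $\mathcal{R}_{\C[G]}$ and uses the inclusion $\mathcal{R}_{\C[G]}\otimes_{\mathcal{R}_{\C[C]}}\mathcal{R}_{\C[G]}\hookrightarrow\UO(G)\otimes_{\mathcal{R}_{\C[C]}}\UO(G)$ so that the already established injectivity carries over. Your purity argument, as stated, does not quite work: purity of $\mathcal{R}_{\C[G]}\hookrightarrow\UO(G)$ gives injectivity after tensoring (that is, at the level of $\Tor_0$), not injectivity of $\Tor_1$; a corrected version would use flat base change of $\Tor$ along the flat, indeed faithfully flat, extension $\mathcal{R}_{\C[G]}\hookrightarrow\UO(G)$. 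The epicness alternative you offer is off target, since \cref{prop: epic_tensor_prod} controls $\mathcal{R}_{\C[G]}\otimes_{\C[G]}\mathcal{R}_{\C[G]}$, which is unrelated to the $\Tor_1$ over $R_G$ you need.
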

    \begin{proof}
        Since $R_{G}$ is a coproduct of rings
        \begin{align*}
            &\Tor_1^{\scalebox{0.8}{$\mathcal{R}_{\scalebox{0.8}{$\C[A]$}}$}}(\UO(G),\UO(G)) \oplus \Tor_1^{\scalebox{0.8}{$\mathcal{R}_{\scalebox{0.8}{$\C[B]$}}$}}(\UO(G),\UO(G)) \rightarrow \Tor_1^{R_{G}}(\UO(G),\UO(G)) \\
            & \rightarrow \UO(G)\otimes_{\scalebox{0.8}{$\mathcal{R}_{\scalebox{0.8}{$\C[C]$}}$}} \UO(G) \rightarrow \UO(G)\otimes_{\scalebox{0.8}{$\mathcal{R}_{\scalebox{0.8}{$\C[A]$}}$}} \UO(G) \oplus \UO(G)\otimes_{\scalebox{0.8}{$\mathcal{R}_{\scalebox{0.8}{$\C[B]$}}$}} \UO(G).
        \end{align*}
        is an exact sequence according to \cite[Theorem 6]{Dicks_MayerVietoris}. By definition, both $\mathcal{R}_{\scalebox{0.8}{$\C[A]$}}$ and $\mathcal{R}_{\scalebox{0.8}{$\C[B]$}}$ are von Neumann regular rings, so $\UO(G)$ is a flat module and their corresponding $\Tor_1$ in the first row vanish. Thus to show that $\Tor_1^{R_{G}}(\UO(G),\UO(G))=0$, it suffices to prove that the diagonal map
        \[
        \UO(G)\otimes_{\scalebox{0.8}{$\mathcal{R}_{\scalebox{0.8}{$\C[C]$}}$}} \UO(G)\rightarrow \UO(G)\otimes_{\scalebox{0.8}{$\mathcal{R}_{\scalebox{0.8}{$\C[A]$}}$}} \UO(G) \oplus \UO(G)\otimes_{\scalebox{0.8}{$\mathcal{R}_{\scalebox{0.8}{$\C[B]$}}$}} \UO(G)
        \]
        is injective. To this end, we shall apply several reductions. First, recall that the inclusions $\C[A]\subseteq \mathcal{R}_{\scalebox{0.8}{$\C[A]$}}$, $\C[B]\subseteq \mathcal{R}_{\scalebox{0.8}{$\C[B]$}}$ and $\C[C]\subseteq \mathcal{R}_{\scalebox{0.8}{$\C[C]$}}$ are epic. So then, according to \cref{prop: epic_tensor_prod}, it is enough to show that the diagonal map
        \[
        \UO(G)\otimes_{\scalebox{0.8}{$\C[C]$}} \UO(G)\rightarrow \UO(G)\otimes_{\scalebox{0.8}{$\C[A]$}} \UO(G) \oplus \UO(G)\otimes_{\scalebox{0.8}{$\C[B]$}} \UO(G)
        \]
        is injective. Recall that $\UO(G)$ is the left (resp. right) Ore localization of $\VN(G)$. Note that, since $\VN(G) \subseteq \ell^2(G) \subseteq \UO(G)$, every $x \in \ell^2(G)$ can be written as $s^{-1}t$ for $s,t \in \VN(G)$ and $s$ not a zero-divisor. Hence, over $\UO(G)\otimes_{\scalebox{0.8}{$\VN(G)$}}\ell^2(G)$ it holds that $1 \otimes x = s^{-1}s \otimes x = s^{-1} \otimes t = x \otimes 1$, and so, the multiplication map $\UO(G)\otimes_{\scalebox{0.8}{$\VN(G)$}}\ell^2(G) \rightarrow \UO(G)$ is an isomorphism of $\UO(G)$-$\C[G]$-bimodules. Similarly, $\UO(G)$ is isomorphic to $\ell^2(G)\otimes_{\scalebox{0.8}{$\VN(G)$}}\UO(G)$ as $\C[G]$-$\UO(G)$-bimodules. Not only that, $\UO(G)$ is a flat right (resp. left) $\VN(G)$-module by \cite[Corollary 10.13]{GoodWar04}, and thus taking tensor product with $\UO(G)$ over $\VN(G)$ preserves injective maps. Then, putting all together, it suffices to prove that the diagonal map
        \[
        \ell^2(G)\otimes_{\scalebox{0.8}{$\C[C]$}} \ell^2(G)\rightarrow \ell^2(G)\otimes_{\scalebox{0.8}{$\C[A]$}} \ell^2(G) \oplus \ell^2(G)\otimes_{\scalebox{0.8}{$\C[B]$}} \ell^2(G)
        \]
        is injective. Let us give an operator interpretation to this map. On the one side, the space $F(\ell^2(G))$ is isomorphic as $\C$-vector space to $\ell^2(G)\otimes_\C \ell^2(G)$ with an explicit isomorphism $\alpha:\ell^2(G)\otimes_\C \ell^2(G)\rightarrow F(\ell^2(G))$ given by
        \[
        \alpha(v\otimes u)(x):=\langle x, u^* \rangle v = T_{u^*,v}(x)
        \]
        for $u,v,x \in \ell^2 (G)$. On the other hand, $\ell^2(G)\otimes_{\scalebox{0.8}{$\C[A]$}} \ell^2(G)$ is isomorphic to the quotient of $\ell^2(G)\otimes_{\C} \ell^2(G)$ by the $\C$-vector space generated by the elements of the form 
        \[
        va\otimes u - v\otimes au
        \]
        for $u,v\in \ell^2(G)$ and $a\in A$. Note that $\alpha$ sends $va\otimes u - v\otimes au$ to the finite rank operator $T_{u^{\ast},va} \cdot (1-a^{-1})$. So, in terms of finite rank operators, this quotient of $\C$-vector spaces corresponds to the quotient $F(\ell^2(G))/F(\ell^2(G))\cdot I_{\scalebox{0.8}{$\C[A]$}}$. Similarly, $\ell^2(G)\otimes_{\scalebox{0.8}{$\C[B]$}} \ell^2(G)$ and $\ell^2(G)\otimes_{\scalebox{0.8}{$\C[C]$}} \ell^2(G)$ are isomorphic as $\C$-vector spaces to $F(\ell^2(G))/F(\ell^2(G))\cdot I_{\scalebox{0.8}{$\C[B]$}}$ and $F(\ell^2(G))/F(\ell^2(G))\cdot I_{\scalebox{0.8}{$\C[C]$}}$, respectively. To put it another way, the map
        \[
        \ell^2(G)\otimes_{\scalebox{0.8}{$\C[C]$}} \ell^2(G)\rightarrow \ell^2(G)\otimes_{\scalebox{0.8}{$\C[A]$}} \ell^2(G) \oplus \ell^2(G)\otimes_{\scalebox{0.8}{$\C[B]$}} \ell^2(G)
        \]
        is injective if and only if the diagonal map
        \[
        F(\ell^2(G))/F(\ell^2(G))\cdot I_{\scalebox{0.8}{$\C[C]$}} \rightarrow F(\ell^2(G))/F(\ell^2(G))\cdot I_{\scalebox{0.8}{$\C[A]$}} \oplus F(\ell^2(G))/F(\ell^2(G))\cdot I_{\scalebox{0.8}{$\C[B]$}}
        \]
        is injective. But, according to \cref{lem: amalg_prod_intersection_augment_ideal} the latter map is injective, and hence, putting all together, we get that $\Tor_1^{R_{G}}(\UO(G),\UO(G))=0$.

        For the last statement, we invoke the exact sequence of \cite[Theorem 6]{Dicks_MayerVietoris} for $\mathcal{R}_{\scalebox{0.8}{$\C[G]$}}$ instead of $\UO(G)$ and use the vanishing of $\Tor_1^{R_{G}}(\UO(G),\UO(G))$ along with the inclusion
        \[
        \mathcal{R}_{\scalebox{0.8}{$\C[G]$}}\otimes_{\scalebox{0.8}{$\mathcal{R}_{\scalebox{0.8}{$\C[C]$}}$}} \mathcal{R}_{\scalebox{0.8}{$\C[G]$}} \subseteq \UO(G)\otimes_{\scalebox{0.8}{$\mathcal{R}_{\scalebox{0.8}{$\C[C]$}}$}} \UO(G).
        \]
    \end{proof}
    
    Now we are ready to prove the first main result of the section. 

    \begin{thm}\label{thm: SAC_amal_prod}
        Let $A$ and $B$ be two groups satisfying the strong Atiyah conjecture over $\C$ and $C$ a finite group. Then the amalgamated free product $G = A \ast_C B$ satisfies the strong Atiyah conjecture over $\C$.
    \end{thm}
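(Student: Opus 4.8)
The plan is to verify the reformulation of \cref{prop: Atiyah_conj_reg_closure}: it suffices to show that every left $\mathcal{R}_{\C[G]}$-module $M$ satisfies $\dim_{\UO(G)}(\UO(G)\otimes_{\mathcal{R}_{\C[G]}}M)\in\frac{1}{\lcm(G)}\Z\cup\{\infty\}$. First one records that $\lcm(G)<\infty$: since $A$ and $B$ satisfy the strong Atiyah conjecture we have $\lcm(A),\lcm(B)<\infty$, and every finite subgroup of an amalgamated free product $A\ast_C B$ is subconjugate to $A$ or to $B$, so $\lcm(G)=\lcm(\lcm(A),\lcm(B))$ and in particular $\lcm(A),\lcm(B)$ divide $\lcm(G)$. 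Then I would set up the rings: by \cref{prop: Atiyah_implies_semisimple} both $\mathcal{R}_{\C[A]}$ and $\mathcal{R}_{\C[B]}$ are semisimple, while $\C[C]=\mathcal{R}_{\C[C]}=\UO(C)$ is semisimple because $C$ is finite; hence $R_G=\mathcal{R}_{\C[A]}\ast_{\mathcal{R}_{\C[C]}}\mathcal{R}_{\C[B]}$ is a coproduct of semisimple rings amalgamated over a semisimple ring, so it is left and right hereditary by the coproduct global-dimension formula. Moreover the composite $\C[G]\hookrightarrow R_G\xrightarrow{\phi}\mathcal{R}_{\C[G]}$ is the inclusion $\C[G]\subseteq\mathcal{R}_{\C[G]}$, which is epic, so $\phi$ is epic; in particular $\mathcal{R}_{\C[G]}\otimes_{R_G}M\cong M$ for every $\mathcal{R}_{\C[G]}$-module $M$.

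The crucial step is to upgrade \cref{prop: vanishing_Tor1_amalg} from $\Tor_1^{R_G}(\mathcal{R}_{\C[G]},\mathcal{R}_{\C[G]})=0$ to the vanishing of $\Tor_1^{R_G}(\UO(G),M)$ for \emph{every} $\mathcal{R}_{\C[G]}$-module $M$. Since $\phi$ is epic and $\Tor_1^{R_G}(\mathcal{R}_{\C[G]},\mathcal{R}_{\C[G]})=0$, the standard presentation argument gives $\Tor_1^{R_G}(\mathcal{R}_{\C[G]},M)=0$ for all such $M$, and there are no higher $\Tor^{R_G}$ since $R_G$ is hereditary; hence a length-one projective $R_G$-resolution $0\to P_1\to P_0\to M\to 0$ base-changes along $\phi$ to a projective $\mathcal{R}_{\C[G]}$-resolution of $M$, and applying $\UO(G)\otimes_{\mathcal{R}_{\C[G]}}-$, which is exact because $\mathcal{R}_{\C[G]}$ is von Neumann regular, yields $\Tor_1^{R_G}(\UO(G),M)=0$. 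Feeding this into Dicks's Mayer--Vietoris sequence for the coproduct $R_G$ with right module $\UO(G)$ and left module $M$, and using that $\UO(G)$ is flat over the von Neumann regular rings $\mathcal{R}_{\C[A]},\mathcal{R}_{\C[B]}$ and over the semisimple ring $\C[C]$, collapses it to the short exact sequence
\[
0\longrightarrow\UO(G)\otimes_{\C[C]}M\longrightarrow\bigl(\UO(G)\otimes_{\mathcal{R}_{\C[A]}}M\bigr)\oplus\bigl(\UO(G)\otimes_{\mathcal{R}_{\C[B]}}M\bigr)\longrightarrow\UO(G)\otimes_{\mathcal{R}_{\C[G]}}M\longrightarrow0 .
\]

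It then remains to apply $\dim_{\UO(G)}$, which is additive on short exact sequences. By \cref{prop: preserve_dim_gps} the two vertex summands have $L^2$-dimensions $\dim_{\UO(A)}(\UO(A)\otimes_{\mathcal{R}_{\C[A]}}M)$ and $\dim_{\UO(B)}(\UO(B)\otimes_{\mathcal{R}_{\C[B]}}M)$, which lie in $\frac{1}{\lcm(G)}\Z\cup\{\infty\}$ by the strong Atiyah conjecture for $A$ and $B$ (in the form of \cref{prop: Atiyah_conj_reg_closure}); and since $\C[C]=\UO(C)$ is semisimple with simple modules of $\C$-dimension dividing $|C|$, \cref{prop: preserve_dim_gps} applied to $C\leqslant G$ gives $\dim_{\UO(G)}(\UO(G)\otimes_{\C[C]}M)\in\frac{1}{|C|}\Z\cup\{\infty\}\subseteq\frac{1}{\lcm(G)}\Z\cup\{\infty\}$. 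Hence $\dim_{\UO(G)}(\UO(G)\otimes_{\mathcal{R}_{\C[G]}}M)\in\frac{1}{\lcm(G)}\Z\cup\{\infty\}$, which is the desired conclusion.

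The point I expect to require the most care is the bookkeeping of the last step when one of the three dimensions is infinite, because then additivity alone does not determine $\dim_{\UO(G)}(\UO(G)\otimes_{\mathcal{R}_{\C[G]}}M)$; this is intertwined with showing that $\mathcal{R}_{\C[G]}$ is semisimple, which one wants in order to reduce arbitrary modules to (cyclic projective) simple ones where everything is finite. The way I would handle it is to exploit the universal-localization machinery: the Sylvester module rank function on the hereditary ring $R_G$ induced by $\phi$ is faithful over finitely generated projectives (by \cref{prop: proj_mod_GR}, \cref{prop: preserve_dim_gps} and \cref{prop: dimG_proj_faithful}) and takes values in the discrete set $\frac{1}{\lcm(G)}\Z$ there (by the strong Atiyah conjecture for $A$ and $B$), so by \cref{prop: project_mod_localization} every finitely generated projective $(R_G)_{\dim}$-module is stably induced from a finitely generated projective $R_G$-module and hence has finite $L^2$-dimension in $\frac{1}{\lcm(G)}\Z$; faithfulness then rules out an infinite orthogonal family of nonzero idempotents, so $(R_G)_{\dim}$ is semisimple, and by \cref{thm: recogn_universal_localization} (applicable once this semisimplicity is in place, together with \cref{prop: vanishing_Tor1_amalg} and the fact that $\mathrm{rk}$-full maps over $R_G$ become invertible in $\mathcal{R}_{\C[G]}$ via \cref{lem: full_vNr} and the $\ast$-regular closedness of matrix rings) one identifies $\mathcal{R}_{\C[G]}$ with $(R_G)_{\dim}$. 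With $\mathcal{R}_{\C[G]}$ semisimple the proof concludes immediately, and the genuine obstacle is exactly disentangling this interdependence — i.e.\ checking that the discretely valued faithful rank function on $R_G$ pushes forward to one on $\mathcal{R}_{\C[G]}$.
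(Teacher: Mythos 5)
Your Tor-vanishing upgrade is correct: since $\phi\colon R_G\to\mathcal{R}_{\C[G]}$ is a ring epimorphism, $R_G$ is hereditary, and $\mathcal{R}_{\C[G]}$ is von Neumann regular, your chain of arguments does give $\Tor_1^{R_G}(\UO(G),M)=0$ for every left $\mathcal{R}_{\C[G]}$-module $M$, and plugging this into Dicks's Mayer--Vietoris sequence does produce the short exact sequence you display. The problem is exactly the one you flag, and it is a genuine gap, not bookkeeping: even when $\dim_{\scalebox{0.8}{$\UO(G)$}}(\UO(G)\otimes_{\mathcal{R}_{\C[G]}}M)$ is finite (say, for $M$ finitely generated over $\mathcal{R}_{\C[G]}$), there is no reason for $\UO(G)\otimes_{\mathcal{R}_{\C[C]}}M$ or the two vertex terms to have finite $L^2$-dimension, since $M$ need not be finitely generated over the edge or vertex rings; and additivity on a short exact sequence with two infinite outer terms says nothing about the quotient.

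Your proposed repair is circular relative to this paper's development. \cref{thm: recogn_universal_localization}, which you would use to identify $\mathcal{R}_{\C[G]}$ with $(R_G)_{\dim}$, has as a hypothesis that the target ring be semisimple, and in \cref{thm: univ_local_amalg_prod} that semisimplicity of $\mathcal{R}_{\C[G]}$ is obtained from \cref{prop: Atiyah_implies_semisimple} applied to \cref{thm: SAC_amal_prod} itself. A direct proof that $(R_G)_{\dim}$ is semisimple, followed by a re-identification with $\mathcal{R}_{\C[G]}$, might be made to work, but your sketch is not enough: ``faithfulness rules out an infinite orthogonal family of idempotents'' is wrong as stated (you need the discrete value set of the rank function and an Artinian-chain argument), and you would first need to know that $(R_G)_{\dim}$ is von Neumann regular, which is not immediate from what you cite.

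The paper avoids the infinite-dimension problem entirely by \emph{not} passing to the reformulation over all $\mathcal{R}_{\C[G]}$-modules. It starts with a finitely presented left $\C[G]$-module $M$, forms $N=R_G\otimes_{\C[G]}M$, embeds $\UO(G)\otimes_{R_G}N$ in some $\UO(G)^k$ (being finitely presented over the regular ring $\UO(G)$ it is projective, hence a direct sum of cyclic projectives), replaces $N$ by its image $\overline N$ there, and takes a finite presentation $0\to I\to R_G^d\to\overline N\to 0$. Then $\Tor_1^{R_G}(\UO(G),\overline N)$ vanishes because $\overline N$ sits inside $\UO(G)^k$ and $R_G$ has weak dimension one, and \cref{thm: induced_submod} makes the kernel $I$ an induced module from the two vertex rings only, with no $C$-term; the bound $\dim_{\scalebox{0.8}{$\UO(G)$}}(\UO(G)\otimes_{R_G}I)\le d$ forces every dimension that enters to be finite. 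This is a genuinely different decomposition from your Mayer--Vietoris sequence: it locates the relevant module as a kernel inside a finitely generated free module, where the Bergman--Dicks structure theory bites, and it is precisely what lets the additivity argument close. I would adopt that decomposition rather than try to patch the infinite-dimension case.
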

    \begin{proof}
        Observe that if $M$ is a finitely presented left $\C[G]$-module, then
        \[
        \dim_{\scalebox{0.8}{$\UO(G)$}}(\UO(G)\otimes_{\scalebox{0.8}{$\C[G]$}} M) = \dim_{\scalebox{0.8}{$\UO(G)$}}(\UO(G) \otimes_{R_{G}} R_{G} \otimes_{\scalebox{0.8}{$\C[G]$}} M)
        \]
        Therefore, to prove the strong Atiyah conjecture for $\C[G]$ it suffices to show that for any finitely presented left $R_{G}$-module $N$
        \[
        \dim_{\scalebox{0.8}{$\UO(G)$}}(\UO(G)\otimes_{R_{G}} N) \in \frac{1}{\lcm(G)}\Z
        \]
        since $R_{G} \otimes_{\scalebox{0.8}{$\C[G]$}} M$ is finitely presented as left $R_{G}$-module. So let $N$ be a finitely presented left $R_{G}$-module. Clearly, $\UO(G) \otimes_{R_{G}} N$ is also finitely presented as left $\UO(G)$-module, and hence, it is a direct sum of cyclic projective $\UO(G)$-modules by \cref{prop: vNr_proj_mod}. In particular, $\UO(G) \otimes_{R_{G}} N$ embeds in $\UO(G)^k$ for some positive integer $k$. Let $\overline{N}$ denote the image of $N$ in $\UO(G) \otimes_{R_{G}} N$ sitting inside $\UO(G)^k$. Note that $\overline{N}$ is an $R_{G}$-submodule of $\UO(G)^k$.

        On the other hand, $\overline{N}$ is finitely generated, so it admits a presentation of left $R_{G}$-modules of the form $0\rightarrow I \rightarrow R_{G}^d \rightarrow \overline{N} \rightarrow 0$ for some positive integer $d$. Observe that $R_{G}$ has weak dimension at most $1$ since by \cref{thm: induced_submod} every ideal is an induced module over semisimple rings according to \cref{prop: Atiyah_implies_semisimple}. In particular, $\Tor_1^{R_{G}}(\UO(G), \overline{N})$ also sits inside $\Tor_1^{R_{G}}(\UO(G), \UO(G)^k)$. Therefore, using that $\Tor_1^{R_{G}}(\UO(G), \UO(G)) = 0$ by \cref{prop: vanishing_Tor1_amalg}, we conclude that $\Tor_1^{R_{G}}(\UO(G), \overline{N}) = 0$. Consequently, if we take tensor product with $\UO(G)\otimes_{R_{G}}$ in the above presentation, we obtain the next exact sequence of left $\UO(G)$-modules
        \[
        0 \rightarrow \UO(G)\otimes_{R_{G}} I \rightarrow \UO(G)^d \rightarrow \UO(G)\otimes_{R_{G}} \overline{N} \rightarrow 0.
        \]
        Since $\UO(G)$ is von Neumann regular, the dimension is exact, and in particular, $\dim_{\scalebox{0.8}{$\UO(G)$}}(\UO(G)\otimes_{R_G} I)$ is finite. Hence, taking dimensions yields
        \[
        \dim_{\scalebox{0.8}{$\UO(G)$}}(\UO(G)\otimes_{R_{G}} N) = \dim_{\scalebox{0.8}{$\UO(G)$}}(\UO(G)\otimes_{R_{G}} \overline{N}) = d - \dim_{\scalebox{0.8}{$\UO(G)$}}(\UO(G)\otimes_{R_{G}} I).
        \]
        In addition, $I$ is a left $R_{G}$-submodule of the induced module $R_{G}^d$. So, according to \cref{thm: induced_submod}, $I$ must have an induced module structure, say
        \[
        I = R_{G} \otimes_{\scalebox{0.8}{$\mathcal{R}_{\scalebox{0.8}{$\C[A]$}}$}} I_1 \oplus R_{G} \otimes_{\scalebox{0.8}{$\mathcal{R}_{\scalebox{0.8}{$\C[B]$}}$}} I_2
        \]
        where $I_1$ and $I_2$ are left $\mathcal{R}_{\scalebox{0.8}{$\C[A]$}}$ and $\mathcal{R}_{\scalebox{0.8}{$\C[B]$}}$-modules, respectively. Since tensor products commute with direct sums and $\dim_{\scalebox{0.8}{$\UO(G)$}}$ is additive, we get
        \[
        \dim_{\scalebox{0.8}{$\UO(G)$}}(\UO(G)\otimes_{R_{G}} I) = \dim_{\scalebox{0.8}{$\UO(G)$}}(\UO(G)\otimes_{\scalebox{0.8}{$\mathcal{R}_{\scalebox{0.8}{$\C[A]$}}$}} I_1) + \dim_{\scalebox{0.8}{$\UO(G)$}}(\UO(G)\otimes_{\scalebox{0.8}{$\mathcal{R}_{\scalebox{0.8}{$\C[B]$}}$}} I_2).
        \]
        But $\mathcal{R}_{\scalebox{0.8}{$\C[A]$}} \subseteq \UO(A)$ and $\mathcal{R}_{\scalebox{0.8}{$\C[B]$}} \subseteq \UO(B)$, and then, by \cref{prop: preserve_dim_gps}, this expression can be written in terms of the groups $A$ and $B$ as follows
        \[
        \dim_{\scalebox{0.8}{$\UO(G)$}}(\UO(G)\otimes_{R_{G}} I) = \dim_{\scalebox{0.8}{$\UO(A)$}}(\UO(A)\otimes_{\scalebox{0.8}{$\mathcal{R}_{\scalebox{0.8}{$\C[A]$}}$}} I_1) + \dim_{\scalebox{0.8}{$\UO(B)$}}(\UO(B)\otimes_{\scalebox{0.8}{$\mathcal{R}_{\scalebox{0.8}{$\C[B]$}}$}} I_2).
        \]
        Finally, since the strong Atiyah conjecture holds for $A$ and $B$ over $\C$ and all the dimensions are finite, \cref{prop: Atiyah_conj_reg_closure} implies that
        \[
        \dim_{\scalebox{0.8}{$\UO(A)$}}(\UO(A)\otimes_{\scalebox{0.8}{$\mathcal{R}_{\scalebox{0.8}{$\C[A]$}}$}} I_1)\in \frac{1}{\lcm(A)}\Z \quad \mbox{and} \quad \dim_{\scalebox{0.8}{$\UO(B)$}}(\UO(B)\otimes_{\scalebox{0.8}{$\mathcal{R}_{\scalebox{0.8}{$\C[B]$}}$}} I_2)\in \frac{1}{\lcm(B)}\Z.
        \]
        That is to say,
        \[
        \dim_{\scalebox{0.8}{$\UO(G)$}}(\UO(G)\otimes_{R_{G}} I) \in \frac{1}{\lcm(G)}\Z,
        \]
        and so does $\dim_{\scalebox{0.8}{$\UO(G)$}}(\UO(G)\otimes_{R_{G}} N)$ which concludes the proof.
    \end{proof}

    \begin{rem}\label{rem: sAc_arbitrary_amal_prod}
        The strong Atiyah conjecture needs only to be checked over finitely presented modules, or by duality over matrices that have finitely many entries. So it follows by induction from \cref{thm: SAC_amal_prod}, that the strong Atiyah conjecture is also closed under arbitrarily many consecutive amalgamated free products over finite groups.
    \end{rem}

    The strong Atiyah conjecture endows $\mathcal{R}_{\scalebox{0.8}{$\C[G]$}}$ with the semisimple ring structure. We take advantage of this feature and proceed to show that $\mathcal{R}_{\scalebox{0.8}{$\C[G]$}}$ is a universal localization of $R_G$. Later, we shall use this fact to prove that the algebraic Atiyah conjecture is also closed under taking amalgamations along finite groups. Let us record an intermediate result.

    \begin{prop} \label{prop: R_G_hereditary_faithful}
        Let $A$ and $B$ be two groups satisfying the strong Atiyah conjecture over $\C$ and $C$ a finite group, and let $G = A \ast_C B$ be their amalgamated free product. Then $R_G$ is a hereditary ring and the $\SMRF$ induced from $\UO(G)$ via $\phi$ is faithful over finitely generated projective left $R_G$-modules.
    \end{prop}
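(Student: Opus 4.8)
The plan is to treat the two assertions separately, in each case peeling $R_G = \mathcal{R}_{\C[A]} \ast_{\mathcal{R}_{\C[C]}} \mathcal{R}_{\C[B]}$ apart into its vertex rings. First I would record that the hypotheses of the graph-of-rings machinery are met: since $C$ is finite one has $\UO(C) = \VN(C) = \C[C]$, so $\mathcal{R}_{\C[C]} = \C[C]$ is semisimple by Maschke's theorem; the vertex rings $\mathcal{R}_{\C[A]}$ and $\mathcal{R}_{\C[B]}$ are semisimple by \cref{prop: Atiyah_implies_semisimple}; the underlying graph (two vertices, one edge) is finite; and the edge homomorphisms are injective. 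Hence \cref{thm: induced_submod} and \cref{prop: proj_mod_GR}, together with their right-module analogues, are available for $R_G$.

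\textbf{Hereditarity.} The free module ${}_{R_G}R_G$ is an induced module (put $\mathcal{R}_{\C[A]}$ at one vertex and $0$ at the other), so any left ideal $I \leqslant R_G$ is a submodule of an induced module; by \cref{thm: induced_submod}, $I \cong R_G \otimes_{\mathcal{R}_{\C[A]}} I_1 \oplus R_G \otimes_{\mathcal{R}_{\C[B]}} I_2$ with $I_j$ a module over the corresponding vertex ring. Each $I_j$ is projective because the vertex rings are semisimple, hence a direct summand of a free module, and applying the additive functor $R_G \otimes_{\mathcal{R}_{\C[-]}}(-)$ exhibits $R_G \otimes_{\mathcal{R}_{\C[-]}} I_j$ as a direct summand of a free $R_G$-module. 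Thus $I$ is projective, so $R_G$ is left hereditary; the right-handed statement follows the same way from the right-module version of \cref{thm: induced_submod}.

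\textbf{Faithfulness.} Let $P$ be a finitely generated projective left $R_G$-module with $\dim_{\UO(G)}(\UO(G)\otimes_{R_G}P)=0$. By \cref{prop: proj_mod_GR}, $P \cong R_G \otimes_{\mathcal{R}_{\C[A]}} P_1 \oplus R_G \otimes_{\mathcal{R}_{\C[B]}} P_2$ with $P_j$ a projective module over the corresponding (semisimple) vertex ring, hence $P_j$ is a direct sum of simple modules. Tensoring the decomposition with $\UO(G)$ over $R_G$ and using $\UO(G)\otimes_{R_G} R_G\otimes_{\mathcal{R}_{\C[A]}} P_1 \cong \UO(G)\otimes_{\mathcal{R}_{\C[A]}} P_1$ (and likewise for $B$), the hypothesis forces
\[
\dim_{\UO(G)}(\UO(G)\otimes_{\mathcal{R}_{\C[A]}} P_1) = \dim_{\UO(G)}(\UO(G)\otimes_{\mathcal{R}_{\C[B]}} P_2) = 0
\]
since both summands are non-negative. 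Now if $e = e^{2} \neq 0$ in $\mathcal{R}_{\C[A]}$ then, as the composite $\mathcal{R}_{\C[A]} \hookrightarrow \UO(A) \subseteq \UO(G)$ is injective, $\UO(G)\otimes_{\mathcal{R}_{\C[A]}} \mathcal{R}_{\C[A]}e \cong \UO(G)e \neq 0$; therefore every simple summand of $P_1$ contributes a nonzero summand to $\UO(G)\otimes_{\mathcal{R}_{\C[A]}} P_1$. As the latter is finitely generated (it is a direct summand of $\UO(G)\otimes_{R_G}P$), and a finitely generated module cannot be an infinite direct sum of nonzero submodules, $P_1$ has only finitely many simple summands, i.e.\ $P_1$ is a finitely generated projective $\mathcal{R}_{\C[A]}$-module. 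Writing $\UO(G)\otimes_{\mathcal{R}_{\C[A]}}P_1 \cong \UO(G)\otimes_{\UO(A)}\bigl(\UO(A)\otimes_{\mathcal{R}_{\C[A]}}P_1\bigr)$, \cref{prop: preserve_dim_gps} gives $\dim_{\UO(A)}(\UO(A)\otimes_{\mathcal{R}_{\C[A]}}P_1) = 0$, whence $P_1 = 0$ by \cref{prop: dimG_proj_faithful} applied to $A$; symmetrically $P_2 = 0$, so $P = 0$.

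I expect the only genuinely delicate point to be the finite generation of the vertex components $P_1$ and $P_2$, which is exactly what allows \cref{prop: dimG_proj_faithful} to be invoked; everything else is routine bookkeeping with induced modules together with the invariance of the $L^2$-dimension under induction from a subgroup. The same argument goes through verbatim for an arbitrary subfield $K \subseteq \C$ closed under complex conjugation.
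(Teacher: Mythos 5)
Your argument is correct and follows the paper's own proof quite closely: semisimplicity of the vertex rings from \cref{prop: Atiyah_implies_semisimple} combined with \cref{thm: induced_submod} gives hereditarity, and \cref{prop: proj_mod_GR} gives the decomposition $P \cong R_G\otimes_{\mathcal{R}_{\C[A]}}P_1 \oplus R_G\otimes_{\mathcal{R}_{\C[B]}}P_2$, after which one reduces to \cref{prop: dimG_proj_faithful} and \cref{prop: preserve_dim_gps} at each vertex. The one place you diverge is the finite generation of $P_1$ and $P_2$: the paper cites Lemma~4 of Linnell--L\"uck--Schick (which gives this for graphs of rings directly), whereas you rederive it from scratch by noting that each nonzero simple summand $\mathcal{R}_{\C[A]}e$ survives under $\UO(G)\otimes_{\mathcal{R}_{\C[A]}}(-)$ because $\mathcal{R}_{\C[A]}\hookrightarrow\UO(G)$ is injective, and a finitely generated module cannot split as an infinite direct sum of nonzero pieces. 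That substitute argument is valid, so nothing is lost; it simply trades an external reference for a short self-contained computation.
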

    \begin{proof}
        Since $A$, $B$ and $C$ satisfy the strong Atiyah conjecture over $\C$, then the corresponding $\ast$-regular closures are semisimple rings by \cref{prop: Atiyah_implies_semisimple}. Thus, it follows from \cref{thm: induced_submod} that $R_G$ is a hereditary ring. 

        For the second part, let $P$ be a finitely generated projective left $R_G$-module. According to \cref{prop: proj_mod_GR} $P$ is of the form
        \[
        R_{G} \otimes_{\scalebox{0.8}{$\mathcal{R}_{\scalebox{0.8}{$\C[A]$}}$}} P_1 \oplus R_{G} \otimes_{\scalebox{0.8}{$\mathcal{R}_{\scalebox{0.8}{$\C[B]$}}$}} P_2
        \]
        where $P_1$ and $P_2$ are projective left $\mathcal{R}_{\scalebox{0.8}{$\C[A]$}}$ and $\mathcal{R}_{\scalebox{0.8}{$\C[B]$}}$-modules, respectively. Furthermore, both of them are finitely generated by \cite[Lemma 4]{LinnellLuckSchick_OreCond}. Thus if $\dim_{\scalebox{0.8}{$\UO(G)$}}(\UO(G)\otimes_{R_{G}} P) = 0$, it holds that
        \[
        \dim_{\scalebox{0.8}{$\UO(G)$}}(\UO(G)\otimes_{\scalebox{0.8}{$\mathcal{R}_{\scalebox{0.8}{$\C[A]$}}$}} P_1) = 0 = \dim_{\scalebox{0.8}{$\UO(G)$}}(\UO(G)\otimes_{\scalebox{0.8}{$\mathcal{R}_{\scalebox{0.8}{$\C[B]$}}$}} P_2).
        \]
        However, from \cref{prop: dimG_proj_faithful} along with \cref{prop: preserve_dim_gps}, we deduce that $P_1$ and $P_2$ are the zero module, and hence, so is $P$.
    \end{proof}

    \begin{rem} \label{rem: K_0_R_G}
        Note that the proof actually shows that the map
        \[
        K_0(\mathcal{R}_{\scalebox{0.8}{$\C[A]$}}) \oplus K_0(\mathcal{R}_{\scalebox{0.8}{$\C[B]$}}) \rightarrow K_0(R_G)
        \]
        is surjective.
    \end{rem}

    \begin{thm}\label{thm: univ_local_amalg_prod}
        Let $A$ and $B$ be two groups satisfying the strong Atiyah conjecture over $\C$ and $C$ a finite group, and let $G = A \ast_C B$ be their amalgamated free product. Then $\mathcal{R}_{\scalebox{0.8}{$\C[G]$}}$ is the universal localization of all full maps over $R_G$ with respect to the $L^2$-dimension.
    \end{thm}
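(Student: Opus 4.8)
The plan is to apply the recognition criterion \cref{thm: recogn_universal_localization} to the homomorphism $\phi \colon R_G \rightarrow \mathcal{R}_{\scalebox{0.8}{$\C[G]$}}$, equipped with the $\SMRF$ coming from the $L^2$-dimension. Since $\mathcal{R}_{\scalebox{0.8}{$\C[G]$}} \subseteq \UO(G)$, the $L^2$-dimension restricts to an $\SMRF$ $\dim$ on $\mathcal{R}_{\scalebox{0.8}{$\C[G]$}}$, and because $\phi$ is the restriction of the inclusion $\mathcal{R}_{\scalebox{0.8}{$\C[G]$}} \hookrightarrow \UO(G)$ followed by nothing — more precisely, the map $R_G \rightarrow \UO(G)$ factors through $\phi$ — the pullback $\phi^{\#}(\dim)$ is exactly the $\SMRF$ on $R_G$ induced from $\UO(G)$. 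Consequently $(R_G)_{\dim}$ in the sense of \cref{subsec: univ_local} is precisely the universal localization of $R_G$ at all full maps with respect to the $L^2$-dimension, so it suffices to check the hypotheses of \cref{thm: recogn_universal_localization}.

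I would verify the four requirements in turn. First, $R_G$ is (left) hereditary by \cref{prop: R_G_hereditary_faithful}. Second, $\phi$ is epic: the inclusion $\C[G] \hookrightarrow \mathcal{R}_{\scalebox{0.8}{$\C[G]$}}$ is epic by \cite[Proposition 6.1]{JaikinBaseChange}, and it factors through $R_G$; hence if two ring homomorphisms out of $\mathcal{R}_{\scalebox{0.8}{$\C[G]$}}$ agree after composing with $\phi$, they already agree on the image of $\C[G]$ and therefore coincide. Third, $\mathcal{R}_{\scalebox{0.8}{$\C[G]$}}$ is semisimple: by \cref{thm: SAC_amal_prod} the group $G = A \ast_C B$ satisfies the strong Atiyah conjecture over $\C$, so \cref{prop: Atiyah_implies_semisimple} applies. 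Fourth, $\phi^{\#}(\dim)$ is faithful over finitely generated projective left $R_G$-modules, which is again part of \cref{prop: R_G_hereditary_faithful}. Finally, the vanishing $\Tor_1^{R_G}(\mathcal{R}_{\scalebox{0.8}{$\C[G]$}}, \mathcal{R}_{\scalebox{0.8}{$\C[G]$}}) = 0$ is exactly the last assertion of \cref{prop: vanishing_Tor1_amalg}. With all hypotheses in place, \cref{thm: recogn_universal_localization} gives $\mathcal{R}_{\scalebox{0.8}{$\C[G]$}} = (R_G)_{\dim}$, which is the desired conclusion.

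Since every ingredient has already been assembled in the preceding propositions, I do not expect a genuine obstacle here; the only care needed is the bookkeeping identification that the $\SMRF$ featured in \cref{thm: recogn_universal_localization} is literally the one induced by $\phi$, so that ``universal localization at full maps with respect to $\dim$'' on the $\mathcal{R}_{\scalebox{0.8}{$\C[G]$}}$ side matches the statement over $R_G$. The substantive difficulty was concentrated in \cref{prop: vanishing_Tor1_amalg} (the $\Tor_1$-vanishing, via the finite rank operator computation of \cref{lem: amalg_prod_intersection_augment_ideal}) and in \cref{prop: R_G_hereditary_faithful}, both already proved.
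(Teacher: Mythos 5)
Your proposal is correct and follows exactly the same route as the paper: it verifies the hypotheses of \cref{thm: recogn_universal_localization} for $\phi\colon R_G \to \mathcal{R}_{\scalebox{0.8}{$\C[G]$}}$, citing \cref{prop: R_G_hereditary_faithful} for hereditariness and faithfulness of the induced $\SMRF$, \cref{thm: SAC_amal_prod} together with \cref{prop: Atiyah_implies_semisimple} for semisimplicity, epicness of $\C[G]\hookrightarrow\mathcal{R}_{\scalebox{0.8}{$\C[G]$}}$ for epicness of $\phi$, and \cref{prop: vanishing_Tor1_amalg} for the $\Tor_1$-vanishing. The extra bookkeeping remark about identifying the $\SMRF$ is fine but not a departure from the paper's argument.
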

    \begin{proof}
        We show that we are under the hypothesis of \cref{thm: recogn_universal_localization}. From \cref{prop: R_G_hereditary_faithful} we already know that $R_G$ is a hereditary ring and that the induced $\SMRF$ via $\phi$, that is the pull-back of the $L^2$-dimension, is faithful over finitely generated projective left $R_G$-modules. The map $\phi : R_G \rightarrow \mathcal{R}_{\scalebox{0.8}{$\C[G]$}}$ is clearly epic since $\C[G] \subseteq \mathcal{R}_{\scalebox{0.8}{$\C[G]$}}$ is epic. In addition, according to \cref{thm: SAC_amal_prod} $G$ satisfies the strong Atiyah conjecture over $\C$. In particular, $\mathcal{R}_{\scalebox{0.8}{$\C[G]$}}$ is a semisimple ring by \cref{prop: Atiyah_implies_semisimple}. Finally, \cref{prop: vanishing_Tor1_amalg} states that 
        \[
        \Tor_1^{\scalebox{0.8}{$R_G$}}(\mathcal{R}_{\scalebox{0.8}{$\C[G]$}},\mathcal{R}_{\scalebox{0.8}{$\C[G]$}}) = 0.
        \]
        Therefore, applying \cref{thm: recogn_universal_localization} to $R_G$ and $\mathcal{R}_{\scalebox{0.8}{$\C[G]$}}$ ends the proof.
    \end{proof}

    \begin{cor}\label{cor: K0_amalgamated}
        Let $A$ and $B$ be two groups satisfying the strong Atiyah conjecture over $\C$ and $C$ a finite group, and let $G = A \ast_C B$ be their amalgamated free product. Then the map
        \[
        K_0(\mathcal{R}_{\scalebox{0.8}{$\C[A]$}}) \oplus K_0(\mathcal{R}_{\scalebox{0.8}{$\C[B]$}}) \rightarrow K_0(\mathcal{R}_{\scalebox{0.8}{$\C[G]$}})
        \]
        is surjective.
    \end{cor}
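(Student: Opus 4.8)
The plan is to deduce this purely formally from the universal localization description of $\mathcal{R}_{\scalebox{0.8}{$\C[G]$}}$ already obtained in \cref{thm: univ_local_amalg_prod}, together with the structure theory of projective modules over a universal localization. Since the image of the map in question is a subgroup of $K_0(\mathcal{R}_{\scalebox{0.8}{$\C[G]$}})$ and classes of finitely generated projective modules generate $K_0$, it suffices to show that $[Q]$ lies in the image for every finitely generated projective left $\mathcal{R}_{\scalebox{0.8}{$\C[G]$}}$-module $Q$.

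First I would recall from \cref{thm: univ_local_amalg_prod} that $\mathcal{R}_{\scalebox{0.8}{$\C[G]$}} = (R_G)_{\dim}$ is the universal localization of $R_G$ at the set of full maps with respect to the $L^2$-dimension, and from \cref{prop: R_G_hereditary_faithful} that $R_G$ is hereditary and that the induced $\SMRF$ is faithful over finitely generated projective left $R_G$-modules. Hence \cref{prop: project_mod_localization} applies and produces a finitely generated projective left $R_G$-module $P$ and an integer $n \geq 0$ with $Q \oplus \mathcal{R}_{\scalebox{0.8}{$\C[G]$}}^{\,n} \cong \mathcal{R}_{\scalebox{0.8}{$\C[G]$}} \otimes_{R_G} P$. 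Passing to $K_0$ and writing $\phi_\ast \colon K_0(R_G) \to K_0(\mathcal{R}_{\scalebox{0.8}{$\C[G]$}})$ for the map induced by $\phi$, this reads $[Q] = \phi_\ast([P]) - n\,[\mathcal{R}_{\scalebox{0.8}{$\C[G]$}}] = \phi_\ast([P] - n\,[R_G])$, so $\phi_\ast$ is surjective. Composing with the surjection $K_0(\mathcal{R}_{\scalebox{0.8}{$\C[A]$}}) \oplus K_0(\mathcal{R}_{\scalebox{0.8}{$\C[B]$}}) \to K_0(R_G)$ of \cref{rem: K_0_R_G}, I obtain a surjection $K_0(\mathcal{R}_{\scalebox{0.8}{$\C[A]$}}) \oplus K_0(\mathcal{R}_{\scalebox{0.8}{$\C[B]$}}) \to K_0(\mathcal{R}_{\scalebox{0.8}{$\C[G]$}})$, and the last step is to identify this composite with the map in the statement: this holds because $\phi$ restricted to the subrings $\mathcal{R}_{\scalebox{0.8}{$\C[A]$}}$ and $\mathcal{R}_{\scalebox{0.8}{$\C[B]$}}$ of $R_G$ is, by its very construction, the natural embedding into $\mathcal{R}_{\scalebox{0.8}{$\C[G]$}} \subseteq \UO(G)$, using that the $\ast$-regular closure of $\C[A]$ (resp. $\C[B]$) inside $\UO(A)$ (resp. $\UO(B)$) coincides with its $\ast$-regular closure inside $\UO(G)$.

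Given that \cref{thm: univ_local_amalg_prod} and \cref{rem: K_0_R_G} do all the heavy lifting, I do not expect a genuine obstacle here; the only point that needs a line of justification is the compatibility check in the last paragraph, i.e.\ that the $K_0$-map manufactured by the localization machinery is literally the one induced by the honest inclusions $\mathcal{R}_{\scalebox{0.8}{$\C[A]$}}, \mathcal{R}_{\scalebox{0.8}{$\C[B]$}} \hookrightarrow \mathcal{R}_{\scalebox{0.8}{$\C[G]$}}$, which is immediate from the defining commutative diagram of $\phi$ and the subgroup-stability of the $\ast$-regular closure.
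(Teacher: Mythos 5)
Your proof is correct and follows essentially the same route as the paper's: you invoke \cref{thm: univ_local_amalg_prod} and \cref{prop: R_G_hereditary_faithful} to apply \cref{prop: project_mod_localization}, conclude that $K_0(R_G) \to K_0(\mathcal{R}_{\scalebox{0.8}{$\C[G]$}})$ is surjective, and compose with the surjection of \cref{rem: K_0_R_G}. The only difference is that you spell out the passage to $K_0$ classes and the compatibility of the composite with the honest inclusions, steps the paper leaves implicit.
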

    \begin{proof}
        According to \cref{prop: R_G_hereditary_faithful}, $R_G$ is a hereditary ring with an $\SMRF$ that is faithful over finitely generated projective left $R_G$-modules. Furthermore, $\mathcal{R}_{\scalebox{0.8}{$\C[G]$}}$ is a universal localization of $R_G$ at full maps by \cref{thm: univ_local_amalg_prod}. Thus, by \cref{prop: project_mod_localization} we can conclude that the map $K_0(R_G) \rightarrow K_0(\mathcal{R}_{\scalebox{0.8}{$\C[G]$}})$ is surjective. But, recall that the map
        \[
        K_0(\mathcal{R}_{\scalebox{0.8}{$\C[A]$}}) \oplus K_0(\mathcal{R}_{\scalebox{0.8}{$\C[B]$}}) \rightarrow K_0(R_G)
        \]
        is surjective (see \cref{rem: K_0_R_G}). So, composing both surjections we get the desired result.
    \end{proof}

    \begin{thm}\label{thm: AAC_amal_prod}
        Let $A$ and $B$ be two groups satisfying the algebraic Atiyah conjecture over $\C$ and $C$ a finite group. Then the amalgamated free product $G = A \ast_C B$ satisfies the algebraic Atiyah conjecture over $\C$.
    \end{thm}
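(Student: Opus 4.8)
The plan is to obtain the algebraic Atiyah conjecture for $G$ directly from the $K_0$-level statements already established, by a short functoriality argument; essentially all of the substantive work has already been absorbed into \cref{cor: K0_amalgamated} (which in turn rests on the universal localization \cref{thm: univ_local_amalg_prod}), so what remains is a diagram chase.

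First I would note that since $A$ and $B$ satisfy the algebraic Atiyah conjecture over $\C$, they also satisfy the strong Atiyah conjecture over $\C$ (as recorded just after the definition of the algebraic Atiyah conjecture), so \cref{cor: K0_amalgamated} applies and the map $K_0(\mathcal{R}_{\C[A]}) \oplus K_0(\mathcal{R}_{\C[B]}) \to K_0(\mathcal{R}_{\C[G]})$ is surjective. On the other hand, the algebraic Atiyah conjecture for $A$ and for $B$ furnishes surjections $\bigoplus_{F \leqslant A,\,|F|<\infty} K_0(\C[F]) \to K_0(\mathcal{R}_{\C[A]})$ and $\bigoplus_{F \leqslant B,\,|F|<\infty} K_0(\C[F]) \to K_0(\mathcal{R}_{\C[B]})$. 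Composing these with the surjection above yields that
\[
\Big(\bigoplus_{F \leqslant A,\,|F|<\infty} K_0(\C[F])\Big) \oplus \Big(\bigoplus_{F \leqslant B,\,|F|<\infty} K_0(\C[F])\Big) \longrightarrow K_0(\mathcal{R}_{\C[G]})
\]
is surjective.

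It then remains to check that this composite factors through the algebraic Atiyah map of $G$. Every finite subgroup of $A$ or of $B$ is a finite subgroup of $G$, and since $\mathcal{R}_{\C[A]}, \mathcal{R}_{\C[B]} \subseteq \mathcal{R}_{\C[G]}$ inside $\UO(G)$ (cf.\ the inclusions used in the construction of $R_G$), functoriality of $K_0$ shows that for a finite $F \leqslant A$ the composite $K_0(\C[F]) \to K_0(\mathcal{R}_{\C[A]}) \to K_0(\mathcal{R}_{\C[G]})$ agrees with the map $K_0(\C[F]) \to K_0(\mathcal{R}_{\C[G]})$ induced by the inclusion $\C[F] \hookrightarrow \mathcal{R}_{\C[G]}$, and likewise for $F \leqslant B$. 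Hence the displayed surjection equals the natural map into $\bigoplus_{F \leqslant G,\,|F|<\infty} K_0(\C[F])$ followed by the algebraic Atiyah map of $G$, and surjectivity of the former forces surjectivity of the latter, which is exactly the claim. The only point requiring any care is the commutativity of this $K_0$-diagram, which is routine once one recalls the inclusions $\mathcal{R}_{\C[A]},\mathcal{R}_{\C[B]} \subseteq \mathcal{R}_{\C[G]}$; I do not expect a genuine obstacle here, since the real content was already packaged into \cref{cor: K0_amalgamated}.
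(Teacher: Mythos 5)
Your proposal is correct and follows essentially the same route as the paper: use \cref{cor: K0_amalgamated} (whose hypotheses hold because algebraic implies strong Atiyah) to get the surjection $K_0(\mathcal{R}_{\scalebox{0.8}{$\C[A]$}}) \oplus K_0(\mathcal{R}_{\scalebox{0.8}{$\C[B]$}}) \rightarrow K_0(\mathcal{R}_{\scalebox{0.8}{$\C[G]$}})$, then compose with the surjective algebraic Atiyah maps of $A$ and $B$. The only difference is that you spell out the commutativity of the relevant $K_0$-diagram, which the paper leaves implicit.
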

    \begin{proof}
        According to \cref{cor: K0_amalgamated}, the map
        \[
        K_0(\mathcal{R}_{\scalebox{0.8}{$\C[A]$}}) \oplus K_0(\mathcal{R}_{\scalebox{0.8}{$\C[B]$}}) \rightarrow K_0(\mathcal{R}_{\scalebox{0.8}{$\C[G]$}})
        \]
        is surjective as long as $A$ and $B$ satisfy the strong Atiyah conjecture over $\C$. But this is the case since both of them satisfy the algebraic Atiyah conjecture over $\C$. By definition, the algebraic Atiyah maps corresponding to $A$ and $B$ are surjective. So the algebraic Atiyah map of $G$ is also surjective.
    \end{proof}

\section{HNN-extensions over finite groups} \label{sec: HNN_ext}

    In this section we focus on HNN-extensions over finite groups and prove similar results to the ones in the amalgamated free product case. Although we could have adapted the proof of \cref{prop: vanishing_Tor1_amalg} to make it work in the HNN-extensions setting, we have decided to carry out a different approach. Concretely, we present the group given by the HNN-extension as a cyclic extension of a normal subgroup that is defined as the fundamental group of a tree of amalgamated groups over finite groups, so that we can use the machinery of the previous section on the normal subgroup and prove the analog of \cref{prop: vanishing_Tor1_amalg}. From this, arguing as in \cref{sec: amalg_free_prod}, it follows at once that both the strong and algebraic Atiyah conjectures are closed under HNN-extensions over finite groups as well as the fact that the $\ast$-regular closure of the group is the universal localization over all full maps of the associated HNN of rings construction. 
    
    Let us set up our new framework for this section. As before, the following is also valid for a subfield $K \subseteq \C$ closed under complex conjugation, but we have decided to work with $\C$ to keep the exposition as clear as possible. Consider a group $G$ that is the HNN-extension of the group $A$ over $C$ with associated map $\psi$. We denote by $\mathcal{R}_{\scalebox{0.8}{$\C[A]$}}$ and $\mathcal{R}_{\scalebox{0.8}{$\C[C]$}}$ the $\ast$-regular closure of $\C[A]$ and $\C[C]$ in $\UO(A)$ and $\UO(C)$, respectively. Recall that $\mathcal{R}_{\scalebox{0.8}{$\C[C]$}} \subseteq \mathcal{R}_{\scalebox{0.8}{$\C[A]$}}$ (cf. \cite[Lemma 4.1.10]{DLopezAlvarezThesis}) and observe that $\psi$ extends to a ring isomorphism $\widetilde{\psi}$ between $\mathcal{R}_{\scalebox{0.8}{$\C[C]$}}$ and $\mathcal{R}_{\scalebox{0.8}{$\C[\psi(C)]$}}$ (for instance by the explicit construction of the $\ast$-regular closure). Then, we define the HNN ring $R_{G}:=\mathcal{R}_{\scalebox{0.8}{$\C[A]_{\scalebox{0.8}{$\mathcal{R}_{\scalebox{0.8}{$\C[C]$}}$}}$}} \langle t, t^{-1} ; \widetilde{\psi} \rangle$ (see \cref{subsec: graph_rings}). Note that $\C[G]$ sits naturally inside $R_G$. Moreover, we have a ring homomorphism $\phi: R_G\rightarrow \UO(G)$ induced by the natural inclusions $\mathcal{R}_{\scalebox{0.8}{$\C[A]$}}\subseteq \UO(A)\subseteq \UO(G)$ (cf. \cite[Eq. (8.12)]{Luck02}). So we can consider the next commutative diagram of ring homomorphisms
    \[
    \begin{tikzcd}
        R_G \arrow[rr, "\phi"]&  & \UO(G)\\
        & \C[G] \arrow[ur, hook] \arrow[lu, hook']& \nospacepunct{.} 
    \end{tikzcd}
    \]
    As before, the image of $\phi$ is contained in $\mathcal{R}_{\scalebox{0.8}{$\C[G]$}}$. Note that $G$ is an extension of the normal subgroup $N$ defined as the fundamental group of the tree of amalgamated groups
    \[
    \ldots tAt^{-1} \ast_C A \ast_{t^{-1}Ct} t^{-1}At \ldots
    \]
    and the infinite cyclic group generated by the stable letter $t$. Similarly, $R_G$ can be written as the skew Laurent polynomial ring $S_N[t,t^{-1}; \sigma]$, where $S_N$ is the countable tree coproduct
    \[
    \ldots t \mathcal{R}_{\scalebox{0.8}{$\C[A]$}} t^{-1} \ast_{\scalebox{0.8}{$\mathcal{R}_{\scalebox{0.8}{$\C[C]$}}$}} \mathcal{R}_{\scalebox{0.8}{$\C[A]$}} \ast_{\scalebox{0.8}{$t^{-1} \mathcal{R}_{\scalebox{0.8}{$\C[C]$}} t$}} t^{-1} \mathcal{R}_{\scalebox{0.8}{$\C[A]$}} t \ldots
    \]
    and $\sigma$ is the right-shift automorphism corresponding to conjugation by $t$. Note that $t \mathcal{R}_{\scalebox{0.8}{$\C[A]$}} t^{-1} \subseteq \mathcal{R}_{\scalebox{0.8}{$\C[G]$}}$ is a $\ast$-regular ring containing $\C[tAt^{-1}]$. From the definition, it follows that $t \mathcal{R}_{\scalebox{0.8}{$\C[A]$}} t^{-1} = \mathcal{R}_{\scalebox{0.8}{$\C[tAt^{-1}]$}}$. The same applies to different powers of $t$ and the group $C$. In other words, $S_N$ is the tree coproduct ring
    \[
    \ldots \mathcal{R}_{\scalebox{0.8}{$\C[tAt^{-1}]$}} \ast_{\scalebox{0.8}{$\mathcal{R}_{\scalebox{0.8}{$\C[C]$}}$}} \mathcal{R}_{\scalebox{0.8}{$\C[A]$}} \ast_{\scalebox{0.8}{$\mathcal{R}_{\scalebox{0.8}{$\C[t^{-1}Ct]$}}$}} \mathcal{R}_{\scalebox{0.8}{$\C[t^{-1}At]$}} \ldots
    \]

    Before we move on to show that $\Tor_1^{\scalebox{0.8}{$R_G$}}(\mathcal{R}_{\scalebox{0.8}{$\C[G]$}}, \mathcal{R}_{\scalebox{0.8}{$\C[G]$}})$ vanishes, we record a variation of a well-known result.

    \begin{lem}\label{lem: localization_by_cyclic}
        Let $1 \rightarrow N \rightarrow G \rightarrow \Z \rightarrow 1$ be an exact sequence of groups. If $\mathcal{R}_{\scalebox{0.8}{$\C[N]$}}$ is a semisimple ring, then
        \[
        \mathcal{R}_{\scalebox{0.8}{$\C[G]$}} = \Ore(\mathcal{R}_{\scalebox{0.8}{$\C[N]$}} \ast G/N)
        \]
        and is a semisimple ring, where the Ore localization is carried out with respect to the set of non-zero-divisors.
    \end{lem}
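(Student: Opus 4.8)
The plan is to realise $\mathcal{R}_{\scalebox{0.8}{$\C[G]$}}$ explicitly inside $\UO(G)$ as the classical ring of quotients of the crossed product $S:=\mathcal{R}_{\scalebox{0.8}{$\C[N]$}}\ast G/N$, and then to read off semisimplicity from the concrete ring structure of $S$. Since $\Z$ is free the extension $1\to N\to G\to\Z\to 1$ splits, so I would fix $t\in G$ mapping to a generator and let $\sigma$ denote conjugation by $t$. Because $N\trianglelefteq G$, conjugation by $t$ in $\UO(G)$ preserves $\VN(N)$, hence $\UO(N)$, and hence (using $\mathcal{R}(\C[N],\UO(G))=\mathcal{R}_{\scalebox{0.8}{$\C[N]$}}$, cf.\ \cite[Lemma 4.1.10]{DLopezAlvarezThesis}) also $\mathcal{R}_{\scalebox{0.8}{$\C[N]$}}$; thus $\sigma$ restricts to an automorphism of $\mathcal{R}_{\scalebox{0.8}{$\C[N]$}}$ and, inside $\UO(G)$, the crossed product $S$ is identified with the skew Laurent polynomial ring $\mathcal{R}_{\scalebox{0.8}{$\C[N]$}}[t,t^{-1};\sigma]$. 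This $S$ is a $\ast$-subring of $\UO(G)$ (with $t^{\ast}=t^{-1}$), and since $\mathcal{R}_{\scalebox{0.8}{$\C[N]$}}\subseteq\mathcal{R}_{\scalebox{0.8}{$\C[G]$}}$ and $t\in\C[G]$, we get $\C[G]\subseteq S\subseteq\mathcal{R}_{\scalebox{0.8}{$\C[G]$}}$.

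Next I would analyse the structure of $S$. Decomposing the semisimple ring $\mathcal{R}_{\scalebox{0.8}{$\C[N]$}}$ into simple factors, $\sigma$ permutes them, and grouping them into $\sigma$-orbits realises $S$ as a finite direct product of prime rings, each a matrix ring over a (possibly cocycle-twisted) skew Laurent polynomial ring $\Delta[s,s^{-1};\tau]$ over a division ring $\Delta$; such a ring is a left and right Noetherian domain, hence left and right Ore. Therefore $S$ is a semiprime Noetherian ring, its non-zero-divisors form a two-sided Ore set, and $Q:=\Ore(S)$ is a finite product of matrix rings over division rings --- in particular a semisimple ring. (This is also a special case of Goldie's theorem.) It then remains to identify $Q$ with $\mathcal{R}_{\scalebox{0.8}{$\C[G]$}}$. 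For $\mathcal{R}_{\scalebox{0.8}{$\C[G]$}}\subseteq Q$ I would first show $Q\subseteq\UO(G)$: if $s\in S$ is a non-zero-divisor then, since $\UO(N)[t,t^{-1};\sigma]=\UO(N)\ast G/N$ is flat as a left and as a right $S$-module (it is isomorphic to $S\otimes_{\mathcal{R}_{\scalebox{0.8}{$\C[N]$}}}\UO(N)$ as a left $S$-module, and $\UO(N)$ is flat over the von Neumann regular ring $\mathcal{R}_{\scalebox{0.8}{$\C[N]$}}$, symmetrically on the right), $s$ remains a non-zero-divisor in $\UO(N)\ast G/N$; and since $\UO(G)$ is the classical ring of quotients of $\UO(N)\ast G/N$ when $G/N\cong\Z$ --- the well-known fact being varied here --- $s$ is a unit of $\UO(G)$. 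Hence $Q\subseteq\UO(G)$, $Q$ is von Neumann regular (being semisimple), and $Q$ is stable under the involution of $\UO(G)$ (if $s$ is a non-zero-divisor of $S$ so is $s^{\ast}$, both being units of $\UO(G)$), the involution being proper as a restriction of the proper involution of $\UO(G)$; so $Q$ is a $\ast$-regular subring of $\UO(G)$ containing $\C[G]$, which forces $\mathcal{R}_{\scalebox{0.8}{$\C[G]$}}\subseteq Q$ by minimality of the $\ast$-regular closure.

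For the reverse inclusion, I would observe that each non-zero-divisor $s$ of $S$ is a unit of $\UO(G)$ by the above, hence a non-zero-divisor of the von Neumann regular ring $\mathcal{R}_{\scalebox{0.8}{$\C[G]$}}$, hence a unit of $\mathcal{R}_{\scalebox{0.8}{$\C[G]$}}$; since $S\subseteq\mathcal{R}_{\scalebox{0.8}{$\C[G]$}}$ as well, every $s^{-1}r\in Q$ lies in $\mathcal{R}_{\scalebox{0.8}{$\C[G]$}}$, so $Q\subseteq\mathcal{R}_{\scalebox{0.8}{$\C[G]$}}$. This yields $\mathcal{R}_{\scalebox{0.8}{$\C[G]$}}=Q=\Ore(\mathcal{R}_{\scalebox{0.8}{$\C[N]$}}\ast G/N)$, which is semisimple. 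The only non-elementary ingredient --- and the main obstacle to a fully self-contained argument --- is the identification $\UO(G)=\Ore(\UO(N)\ast G/N)$ for $G/N\cong\Z$; all the rest is the passage from the von Neumann regular case (the known result) to the semisimple case, which is exactly where the Noetherian/Goldie structure of $\mathcal{R}_{\scalebox{0.8}{$\C[N]$}}[t,t^{-1};\sigma]$ comes in.
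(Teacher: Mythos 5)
Your overall strategy matches the paper's: form $S=\mathcal{R}_{\scalebox{0.8}{$\C[N]$}}\ast G/N$, argue it is semiprime Noetherian (you give a more explicit structural argument via the $\sigma$-orbits of simple factors, whereas the paper simply cites semiprimeness from \cite[Lemma 2.4(ii)]{LinnellDivRings93} and Noetherianity from \cite[Lemma 8.12]{ReichThesis}), invoke Goldie's theorem to get a semisimple $Q=\Ore(S)$, show that the non-zero-divisors of $S$ become units in $\UO(G)$ so that $Q$ is a $\ast$-regular subring of $\UO(G)$ containing $\C[G]$ and sitting inside $\mathcal{R}_{\scalebox{0.8}{$\C[G]$}}$, and conclude $Q=\mathcal{R}_{\scalebox{0.8}{$\C[G]$}}$ by minimality of the $\ast$-regular closure.

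The gap is in the crucial invertibility step. Your flatness argument --- $\UO(N)\ast G/N$ is flat on both sides over $S$, hence a non-zero-divisor $s\in S$ remains a non-zero-divisor in $\UO(N)\ast G/N$ --- is correct, but it only moves the problem to a different (and larger) ring. To conclude that $s$ is a unit of $\UO(G)$ you appeal to the ``well-known fact'' that $\UO(G)=\Ore(\UO(N)\ast G/N)$ when $G/N\cong\Z$. This is false: take $N=\{1\}$ and $G=\Z$, so that $\UO(N)\ast G/N=\C[t,t^{-1}]$ and $\Ore(\C[t,t^{-1}])=\C(t)$, whereas $\UO(\Z)\cong L^0(S^1)$, which strictly contains the rational functions. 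Even the weaker assertion that every non-zero-divisor of $\UO(N)\ast G/N$ becomes a unit of $\UO(G)$ is not obtained by flatness alone --- $\UO(N)\ast G/N$ is not von Neumann regular (e.g.\ $t-1$ is a non-zero-divisor but not a unit there), so you still need a genuine operator-theoretic input to pass from ``non-zero-divisor in the crossed product'' to ``invertible in $\UO(G)$.'' The paper does not try to derive this from scratch; it cites precisely \cite[Corollary 8.16 \& Proposition 8.17]{ReichThesis}, a variant of \cite[Lemma 4.4]{LinnellDivRings93}, for exactly the needed statement that non-zero-divisors of $\mathcal{R}_{\scalebox{0.8}{$\C[N]$}}\ast G/N$ are invertible in $\UO(G)$. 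You should replace your appeal to the false Ore identification with that citation (or with a proof of Linnell/Reich's lemma); the rest of your argument then goes through.
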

    \begin{proof}
        First, $\mathcal{R}_{\scalebox{0.8}{$\C[N]$}} \ast G/N$ is a $\ast$-subring of $\UO(G)$, and hence semiprime by \cite[Lemma 2.4 (ii)]{LinnellDivRings93}. Second, $\mathcal{R}_{\scalebox{0.8}{$\C[N]$}}$ is noetherian being a semisimple ring, so $\mathcal{R}_{\scalebox{0.8}{$\C[N]$}} \ast G/N$ is noetherian as $G/N \cong \Z$ (see, for instance, \cite[Lemma 8.12]{ReichThesis}). Therefore, by Goldie's theorem \cite[Corollary 6.16]{GoodWar04} it makes sense to consider the Ore localization of $\mathcal{R}_{\scalebox{0.8}{$\C[N]$}} \ast G/N$ with respect to the set of non-zero-divisors and it is a semisimple ring. Now, to prove the equality, observe that $\Ore(\mathcal{R}_{\scalebox{0.8}{$\C[N]$}} \ast G/N)$ is a $\ast$-regular with the operation given by
        \[
        (u^{-1}r)^{\ast} = r^{\ast}(u^{\ast})^{-1}
        \]
        which is well-defined since the right Goldie quotient ring $\Ore(\mathcal{R}_{\scalebox{0.8}{$\C[N]$}} \ast G/N)$ is also a left Goldie quotient ring \cite[Proposition 6.20]{GoodWar04}. Hence, by the definition of the $\ast$-regular closure, it suffices to show that $\Ore(\mathcal{R}_{\scalebox{0.8}{$\C[N]$}} \ast G/N)$ embeds into $\mathcal{R}_{\scalebox{0.8}{$\C[G]$}}$. But this is equivalent to checking that the set of non-zero-divisors of $\mathcal{R}_{\scalebox{0.8}{$\C[N]$}} \ast G/N$ is invertible in $\UO(G)$. In fact this is the case according to \cite[Corollary 8.16 \& Proposition 8.17]{ReichThesis} (this is a variation of \cite[Lemma 4.4]{LinnellDivRings93}).
    \end{proof}

    \begin{prop} \label{prop: vanishing_Tor1_HNN}
        Let $A$ be a group satisfying the strong Atiyah conjecture over $\C$ and $C$ a finite group, and let $G = A \ast_C$ be their HNN-extension. Then $\mathcal{R}_{\scalebox{0.8}{$\C[N]$}}$ is a universal localization of $S_N$ and a semisimple ring. In particular, the modules $\Tor_1^{\scalebox{0.8}{$R_G$}}(\mathcal{R}_{\scalebox{0.8}{$\C[G]$}}, \mathcal{R}_{\scalebox{0.8}{$\C[G]$}})$ and $\Tor_1^{\scalebox{0.8}{$R_G$}}(\UO(G), \UO(G))$ vanish.
    \end{prop}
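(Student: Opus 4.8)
The plan is to derive the statement for $G$ from the amalgamated-product results of \cref{sec: amalg_free_prod} applied to the normal subgroup $N$, and then to propagate them through the skew-Laurent presentation $R_G = S_N[t,t^{-1};\sigma]$. To begin with, $N$ is the fundamental group of a bi-infinite line of amalgamated free products of the groups $t^kAt^{-k}\cong A$ over the finite groups $t^kCt^{-k}$, hence an iterated amalgamated free product over finite groups of groups satisfying the strong Atiyah conjecture over $\C$. By \cref{rem: sAc_arbitrary_amal_prod}, $N$ satisfies the strong Atiyah conjecture over $\C$, so $\mathcal{R}_{\C[N]}$ is semisimple by \cref{prop: Atiyah_implies_semisimple}; then \cref{lem: localization_by_cyclic}, applied to $1\to N\to G\to\Z\to1$, gives at once that $\mathcal{R}_{\C[G]}=\Ore(\mathcal{R}_{\C[N]}\ast G/N)$ is semisimple. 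I would also fix the exhaustions $N=\varinjlim_k N_k$ by the finite sub-amalgams running from $t^{-k}At^k$ to $t^kAt^{-k}$, $S_N=\varinjlim_k S_{N_k}$ by the corresponding finite tree coproducts, and, by uniqueness of relative inverses, $\mathcal{R}_{\C[N]}=\bigcup_k\mathcal{R}_{\C[N_k]}$.

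The heart of the proof is the tree-coproduct analogue of \cref{prop: vanishing_Tor1_amalg}: that $\Tor_1^{S_N}(\mathcal{R}_{\C[N]},\mathcal{R}_{\C[N]})=0$ and $\Tor_1^{S_N}(\UO(N),\UO(N))=0$, and that $\mathcal{R}_{\C[N]}$ is a universal localization of $S_N$. One route is to iterate \cref{thm: SAC_amal_prod}, \cref{thm: univ_local_amalg_prod} and \cref{prop: vanishing_Tor1_amalg} along the line, adjoining one copy of $A$ at a time over a finite group: this keeps all intermediate groups in the strong Atiyah class, shows each $\mathcal{R}_{\C[N_k]}$ is a universal localization of the ring $S_{N_k}$ — which is hereditary by \cref{thm: induced_submod} — at its full maps, and gives $\Tor_{\geq1}^{S_{N_k}}(\mathcal{R}_{\C[N_k]},\mathcal{R}_{\C[N_k]})=0$. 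Passing to the filtered colimit, along which $\Tor$ and universal localization both behave well since the transition maps are flat, yields $\Tor_{\geq1}^{S_N}(\mathcal{R}_{\C[N]},\mathcal{R}_{\C[N]})=0$ — equivalently, $\phi\colon S_N\to\mathcal{R}_{\C[N]}$ is a homological epimorphism — and that $\mathcal{R}_{\C[N]}$ is a universal localization of $S_N$. Alternatively one argues directly: by the Mayer--Vietoris sequence for graphs of rings \cite[Theorem 6]{Dicks_MayerVietoris} and von Neumann regularity of the vertex and edge rings, $\Tor_1^{S_N}(\UO(N),\UO(N))$ is the kernel of the natural map $\bigoplus_e\UO(N)\otimes_{\mathcal{R}_{\C[C_e]}}\UO(N)\to\bigoplus_v\UO(N)\otimes_{\mathcal{R}_{\C[A_v]}}\UO(N)$, and the epicness and Ore-localization reductions of \cref{prop: vanishing_Tor1_amalg} turn its injectivity into injectivity of $\bigoplus_e F(\ell^2(N))/F(\ell^2(N))\cdot I_{\C[C_e]}\to\bigoplus_v F(\ell^2(N))/F(\ell^2(N))\cdot I_{\C[A_v]}$, which follows by applying \cref{lem: amalg_prod_intersection_augment_ideal} edge by edge (and the same with $\mathcal{R}_{\C[N]}$ in place of $\UO(N)$, via $\mathcal{R}_{\C[N]}\otimes\mathcal{R}_{\C[N]}\subseteq\UO(N)\otimes\UO(N)$).

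It then remains to pass from $S_N$ to $R_G=S_N[t,t^{-1};\sigma]$. Since $\sigma$ merely permutes the vertex factors of $S_N$ and the $S_N$-module structures on $\UO(G)$ and on $\mathcal{R}_{\C[G]}$ factor through $\phi$, the homological epimorphism above gives $\Tor_1^{S_N}(M,M)=\Tor_1^{\mathcal{R}_{\C[N]}}(M,M)=0$ and $M\otimes_{S_N}M=M\otimes_{\mathcal{R}_{\C[N]}}M$ for $M\in\{\UO(G),\mathcal{R}_{\C[G]}\}$, using that $\mathcal{R}_{\C[N]}$ is von Neumann regular. The bimodule short exact sequence $0\to R_G\otimes_{S_N}R_G\to R_G\otimes_{S_N}R_G\to R_G\to0$ attached to $1\to N\to G\to\Z\to1$ produces a long exact sequence whose relevant segment reads $0=\Tor_1^{S_N}(M,M)\to\Tor_1^{R_G}(M,M)\to M\otimes_{\mathcal{R}_{\C[N]}}M\xrightarrow{1-\sigma_{\ast}}M\otimes_{\mathcal{R}_{\C[N]}}M$. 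The kernel of $1-\sigma_{\ast}$ is $\Tor_1^{\mathcal{R}_{\C[N]}\ast G/N}(M,M)$ (apply the same bimodule sequence over $\mathcal{R}_{\C[N]}$ and use $\Tor_1^{\mathcal{R}_{\C[N]}}=0$), which vanishes because $M$ is flat over $\mathcal{R}_{\C[N]}\ast G/N$: indeed $\mathcal{R}_{\C[N]}\ast G/N\hookrightarrow\mathcal{R}_{\C[G]}$ is an Ore localization, hence flat, and $\mathcal{R}_{\C[G]}\hookrightarrow\UO(G)$ is flat since $\mathcal{R}_{\C[G]}$ is von Neumann regular. Hence $\Tor_1^{R_G}(\UO(G),\UO(G))=\Tor_1^{R_G}(\mathcal{R}_{\C[G]},\mathcal{R}_{\C[G]})=0$.

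The main obstacle is the middle step. In the iterated approach one has to check that forming universal localizations at full maps is compatible with coproducts over finite-group closures and with the filtered colimit defining $S_N$, so that \cref{thm: univ_local_amalg_prod} and \cref{prop: vanishing_Tor1_amalg} genuinely globalize to the bi-infinite tree; in the direct approach one has to lift the finite-rank-operator computation of \cref{lem: amalg_prod_intersection_augment_ideal} from a single amalgam to the whole tree, deploying the operators $D_{\widetilde a}$ along every edge and keeping track of the resulting tree normal forms. Once this is done, the deduction over $R_G$ is routine homological bookkeeping.
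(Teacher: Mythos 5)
Your proposal lands on the paper's strategy for the central claim that $\mathcal{R}_{\C[N]}$ is a universal localization of $S_N$: your Route 1 is precisely what the paper does, exhausting $N$ by finite sub-amalgams $H_n$ (beginning with $H_1 = tAt^{-1}\ast_C A$ and adjoining one conjugate of $A$ at a time), showing inductively via \cref{thm: univ_local_amalg_prod} that each $\mathcal{R}_{\C[H_n]}$ is a universal localization of the associated finite tree coproduct, and passing to the directed union. You correctly flag transitivity of localizations across the steps as the delicate point; the paper resolves it by verifying the hypotheses of \cref{prop: local_transitive}, namely that the new set of maps at stage $n+1$ lies between stably induced finitely generated projectives over the larger coproduct ring, which is supplied by \cref{rem: K_0_R_G} and \cref{cor: K0_amalgamated}. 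Your Route 2 (a single Mayer--Vietoris over the whole infinite tree plus the finite-rank-operator computation of \cref{lem: amalg_prod_intersection_augment_ideal} deployed edge by edge) is not what the paper does and, as you note, would require a genuine extension of that lemma to tree normal forms; the inductive route sidesteps it. Where you genuinely diverge is the final passage from $S_N$ to $R_G$: you invoke a Wang-type sequence for the skew-Laurent extension $R_G = S_N[t,t^{-1};\sigma]$ and transport it to $\mathcal{R}_{\C[N]}\ast G/N$ via the homological epimorphism $S_N \to \mathcal{R}_{\C[N]}$, whereas the paper first reduces $\Tor_1^{R_G}$ over $\mathcal{R}_{\C[G]}$ (resp.\ $\UO(G)$) to $\Tor_1^{R_G}$ over $\mathcal{R}_{\C[N]}\ast G/N$ using flatness of the Ore localization from \cref{lem: localization_by_cyclic}, and then applies Shapiro's lemma to identify the latter with $\bigoplus_{|G/N|}\Tor_1^{S_N}(\mathcal{R}_{\C[N]},\mathcal{R}_{\C[N]})$. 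Both final steps are valid, but the bimodule short exact sequence $0\to R_G\otimes_{S_N}R_G\to R_G\otimes_{S_N}R_G\to R_G\to 0$ as you write it needs a $\sigma$-twist on one of the two $S_N$-actions for the boundary map to be well-defined; the paper's Shapiro route avoids that bookkeeping entirely.
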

    \begin{proof}
        We start proving the consequence assuming the first part of the statement. Since $\mathcal{R}_{\scalebox{0.8}{$\C[G]$}}$ is a von Neumann regular ring, it suffices to check that the module $\Tor_1^{\scalebox{0.8}{$R_G$}}(\mathcal{R}_{\scalebox{0.8}{$\C[G]$}}, \mathcal{R}_{\scalebox{0.8}{$\C[G]$}}) = 0$. By \cref{lem: localization_by_cyclic} $\mathcal{R}_{\scalebox{0.8}{$\C[G]$}}$ is the Ore localization with respect to the set of non-zero-divisors of $\mathcal{R}_{\scalebox{0.8}{$\C[N]$}} \ast G/N$, so it is flat as left and right $\mathcal{R}_{\scalebox{0.8}{$\C[N]$}} \ast G/N$-module (cf. \cite[Corollary 10.13]{GoodWar04}). Thus $\Tor_1^{\scalebox{0.8}{$R_G$}}(\mathcal{R}_{\scalebox{0.8}{$\C[G]$}}, \mathcal{R}_{\scalebox{0.8}{$\C[G]$}})$ is the zero module if $\Tor_1^{\scalebox{0.8}{$R_G$}}(\mathcal{R}_{\scalebox{0.8}{$\C[N]$}} \ast G/N, \mathcal{R}_{\scalebox{0.8}{$\C[N]$}} \ast G/N)$ vanishes. On the other hand, the module $\Tor_1^{\scalebox{0.8}{$R_G$}}(\mathcal{R}_{\scalebox{0.8}{$\C[N]$}} \ast G/N, \mathcal{R}_{\scalebox{0.8}{$\C[N]$}} \ast G/N)$ is isomorphic to 
        \[
        \Tor_1^{\scalebox{0.8}{$S_N \otimes_{\scalebox{0.8}{$\C[N]$}} \C[G]$}}(\mathcal{R}_{\scalebox{0.8}{$\C[N]$}} \otimes_{\scalebox{0.8}{$\C[N]$}} \C[G], \mathcal{R}_{\scalebox{0.8}{$\C[N]$}} \otimes_{\scalebox{0.8}{$\C[N]$}} \C[G]),
        \]
        and this is just $\oplus_{|G/N|} \Tor_1^{\scalebox{0.8}{$S_N$}}(\mathcal{R}_{\scalebox{0.8}{$\C[N]$}}, \mathcal{R}_{\scalebox{0.8}{$\C[N]$}})$ by Shapiro's lemma. So, if $\mathcal{R}_{\scalebox{0.8}{$\C[N]$}}$ is a universal localization of $S_N$, then according to \cite[Theorem 4.7 \& Theorem 4.8]{Schofield85}
        \[
        \Tor_1^{\scalebox{0.8}{$S_N$}}(\mathcal{R}_{\scalebox{0.8}{$\C[N]$}}, \mathcal{R}_{\scalebox{0.8}{$\C[N]$}}) = 0
        \]
        as we wanted.

        Now we show the localization part. We recall that $N$ is defined as the fundamental group of the tree of amalgamated groups
        \[
        \ldots tAt^{-1} \ast_C A \ast_{t^{-1}Ct} t^{-1}At \ldots
        \]
        Set $H_1 := tAt^{-1} \ast_C A$ and for $n \geq 1$ let $H_{n+1}$ be the amalgamated free product of $H_n$ with the next right, respectively left, factor of the above tree of groups if $n$ is odd, respectively even. That is, $\{H_n : n \geq 1\}$ is a sequence of increasing subgroups of $N$ that covers $N$ via adding alternatively factors on the right and on the left. From the construction of the $\ast$-regular closure it follows that
        \[
        \mathcal{R}_{\scalebox{0.8}{$\C[N]$}} = \bigcup_{n \geq 1} \mathcal{R}_{\scalebox{0.8}{$\C[H_n]$}}.
        \]
        We show by induction on $n$ that $\mathcal{R}_{\scalebox{0.8}{$\C[H_n]$}}$ is a universal localization of the associated coproduct ring $R_{H_n}$ of the $\ast$-regular closures. The case $n = 1$ is \cref{thm: univ_local_amalg_prod}. So assume that $\mathcal{R}_{\scalebox{0.8}{$\C[H_n]$}}$ is a universal localization of $R_{H_n}$ with respect to the set of maps $\Sigma_n$ for some $n \geq 1$. Note that $H_n$ satisfies the strong Atiyah conjecture over $\C$ (see \cref{rem: sAc_arbitrary_amal_prod}). Suppose that $n$ is odd, the even case is proved with a symmetric argument. Again by \cref{thm: univ_local_amalg_prod}, we know that $\mathcal{R}_{\scalebox{0.8}{$\C[H_{n+1}]$}}$ is a universal localization of the coproduct ring
        \[
        \mathcal{R}_{\scalebox{0.8}{$\C[H_n]$}} \ast_{\scalebox{0.8}{$\mathcal{R}_{\scalebox{0.8}{$\C[t^{-(n+1)/2} C t^{(n+1)/2}]$}}$}} \mathcal{R}_{\scalebox{0.8}{$\C[t^{-(n+1)/2} A t^{(n+1)/2}]$}}
        \]
        with respect to the set of maps $\Sigma'_{n+1}$. However, by \cref{rem: K_0_R_G} and \cref{cor: K0_amalgamated}, the maps in $\Sigma'_{n+1}$ are between projective modules induced from finitely generated projectives $R_{H_{n+1}}$-modules. Thus by \cref{prop: local_transitive}
        \[
        \mathcal{R}_{\scalebox{0.8}{$\C[H_{n+1}]$}} = (R_{H_{{n+1}_{\scalebox{0.8}{$\Sigma_n$}}}})_{\Sigma_{n+1}'} = R_{H_{{n+1}_{\scalebox{0.8}{$\Sigma_{n+1}$}}}}
        \]
        where $\Sigma_{n+1} = \Sigma_n \cup \overline{\Sigma_{n+1}}$ and $\overline{\Sigma_{n+1}}$ is the set of induced maps that are stably associated to elements of $\Sigma'_{n+1}$. This finishes the induction. 
        
        Note that the above argument also shows that there is a natural directed system between the localizations since the sets in $\{ \Sigma_n : n\geq 1\}$ form an increasing union. Therefore, if we define $\Sigma = \cup_{n \geq 1} \Sigma_n$, we have
        \[
        \mathcal{R}_{\scalebox{0.8}{$\C[N]$}} = \bigcup_{n \geq 1} \mathcal{R}_{\scalebox{0.8}{$\C[H_n]$}} = \bigcup_{n \geq 1} R_{H_{n_{\scalebox{0.8}{$\Sigma_{n}$}}}} = S_{N_{\scalebox{0.8}{$\Sigma$}}}
        \]
        as we claimed. Finally, $\mathcal{R}_{\scalebox{0.8}{$\C[N]$}}$ is semisimple by \cref{prop: Atiyah_implies_semisimple} since $N$ satisfies the strong Atiyah conjecture over $\C$ according to \cref{rem: sAc_arbitrary_amal_prod}.
    \end{proof}

    From this statement, the remaining results follow.

    \begin{thm}\label{thm: SAC_HNN_localization_&_K0}
        Let $A$ be a group satisfying the strong Atiyah conjecture over $\C$ and $C$ a finite subgroup, and let $G = A \ast_C$ be their HNN-extension. Then the following holds:
        \begin{enumerate}[label=(\roman*)]
            \item $G$ satisfies the strong Atiyah conjecture over $\C$;
            \item $R_G$ is a hereditary ring and the $\SMRF$ induced from $\UO(G)$ via $\phi$ is faithful over finitely generated projective left $R_G$-modules;
            \item the map $K_0(\mathcal{R}_{\scalebox{0.8}{$\C[A]$}}) \rightarrow K_0(R_G)$ is surjective;
            \item $\mathcal{R}_{\scalebox{0.8}{$\C[G]$}}$ is the universal localization of all full maps over $R_G$ with respect to the $L^2$-dimension; and
            \item the map $K_0(\mathcal{R}_{\scalebox{0.8}{$\C[A]$}}) \rightarrow K_0(\mathcal{R}_{\scalebox{0.8}{$\C[G]$}})$ is surjective.
        \end{enumerate}
    \end{thm}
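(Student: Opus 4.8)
The plan is to reproduce, in the HNN setting, the chain of arguments of \cref{sec: amalg_free_prod}, now that \cref{prop: vanishing_Tor1_HNN} supplies the vanishing of $\Tor_1^{R_G}(\UO(G),\UO(G))$ and of $\Tor_1^{R_G}(\mathcal{R}_{\scalebox{0.8}{$\C[G]$}},\mathcal{R}_{\scalebox{0.8}{$\C[G]$}})$. The only structural difference to keep track of is that here $R_G$ is a graph of rings with a \emph{single} vertex ring $\mathcal{R}_{\scalebox{0.8}{$\C[A]$}}$ and a single edge ring $\mathcal{R}_{\scalebox{0.8}{$\C[C]$}}$, which is semisimple because $C$ is finite; consequently \cref{thm: induced_submod} and \cref{prop: proj_mod_GR} apply to $R_G$, with "induced module" meaning $R_G\otimes_{\scalebox{0.8}{$\mathcal{R}_{\scalebox{0.8}{$\C[A]$}}$}}(-)$.

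First I would settle \textit{(ii)} and \textit{(iii)}. Since $A$ satisfies the strong Atiyah conjecture over $\C$, $\mathcal{R}_{\scalebox{0.8}{$\C[A]$}}$ is semisimple by \cref{prop: Atiyah_implies_semisimple}, so every left ideal of $R_G$ is an induced module by \cref{thm: induced_submod} and hence flat (as $R_G$ is flat over $\mathcal{R}_{\scalebox{0.8}{$\C[A]$}}$); thus $R_G$ is left hereditary, in particular of weak dimension at most $1$. For the faithfulness and $K_0$ claims I would copy the proof of \cref{prop: R_G_hereditary_faithful} and \cref{rem: K_0_R_G}: a finitely generated projective left $R_G$-module $P$ is, by \cref{prop: proj_mod_GR} and \cite[Lemma 4]{LinnellLuckSchick_OreCond}, of the form $R_G\otimes_{\scalebox{0.8}{$\mathcal{R}_{\scalebox{0.8}{$\C[A]$}}$}}P_1$ with $P_1$ a finitely generated projective left $\mathcal{R}_{\scalebox{0.8}{$\C[A]$}}$-module; if its $L^2$-dimension vanishes then $\dim_{\scalebox{0.8}{$\UO(A)$}}(\UO(A)\otimes_{\scalebox{0.8}{$\mathcal{R}_{\scalebox{0.8}{$\C[A]$}}$}}P_1)=0$ by \cref{prop: preserve_dim_gps}, so $P_1=0$ by \cref{prop: dimG_proj_faithful} and $P=0$; the same decomposition shows $K_0(\mathcal{R}_{\scalebox{0.8}{$\C[A]$}})\to K_0(R_G)$ is onto.

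For \textit{(i)} I would follow the proof of \cref{thm: SAC_amal_prod} essentially verbatim: given a finitely presented left $R_G$-module $N$, embed $\UO(G)\otimes_{R_G}N$ into $\UO(G)^k$ using \cref{prop: vNr_proj_mod}, let $\overline N$ be the image of $N$, present it as $0\to I\to R_G^d\to\overline N\to 0$, and use weak dimension $\le 1$ together with $\Tor_1^{R_G}(\UO(G),\UO(G))=0$ from \cref{prop: vanishing_Tor1_HNN} to conclude $\Tor_1^{R_G}(\UO(G),\overline N)=0$; tensoring with $\UO(G)$ and invoking exactness of $\dim_{\scalebox{0.8}{$\UO(G)$}}$ reduces matters to $I$, which is induced, say $I\cong R_G\otimes_{\scalebox{0.8}{$\mathcal{R}_{\scalebox{0.8}{$\C[A]$}}$}}I_1$, by \cref{thm: induced_submod}. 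Then $\dim_{\scalebox{0.8}{$\UO(G)$}}(\UO(G)\otimes_{R_G}I)=\dim_{\scalebox{0.8}{$\UO(A)$}}(\UO(A)\otimes_{\scalebox{0.8}{$\mathcal{R}_{\scalebox{0.8}{$\C[A]$}}$}}I_1)\in\frac1{\lcm(A)}\Z$ by \cref{prop: preserve_dim_gps} and \cref{prop: Atiyah_conj_reg_closure}. Since every finite subgroup of an HNN-extension over a finite group is subconjugate to the base, $\lcm(G)=\lcm(A)$ is finite and $\frac1{\lcm(A)}\Z\subseteq\frac1{\lcm(G)}\Z$, which gives \textit{(i)}.

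Finally, \textit{(iv)} is an application of \cref{thm: recogn_universal_localization} to $\phi\colon R_G\to\mathcal{R}_{\scalebox{0.8}{$\C[G]$}}$: $R_G$ is left hereditary and $\phi^{\#}(\dim)$ is faithful on finitely generated projectives by \textit{(ii)}; $\phi$ is epic because $\C[G]\subseteq\mathcal{R}_{\scalebox{0.8}{$\C[G]$}}$ is epic and factors through $R_G$ (exactly as in \cref{thm: univ_local_amalg_prod}); $\mathcal{R}_{\scalebox{0.8}{$\C[G]$}}$ is semisimple by \textit{(i)} and \cref{prop: Atiyah_implies_semisimple}; and $\Tor_1^{R_G}(\mathcal{R}_{\scalebox{0.8}{$\C[G]$}},\mathcal{R}_{\scalebox{0.8}{$\C[G]$}})=0$ by \cref{prop: vanishing_Tor1_HNN}. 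For \textit{(v)}, \cref{prop: project_mod_localization} applied to the hereditary ring $R_G$ with its faithful $\SMRF$ and the universal localization $\mathcal{R}_{\scalebox{0.8}{$\C[G]$}}$ of \textit{(iv)} shows $K_0(R_G)\to K_0(\mathcal{R}_{\scalebox{0.8}{$\C[G]$}})$ is surjective; composing with the surjection of \textit{(iii)} yields the claim, just as in \cref{cor: K0_amalgamated}. I expect no real obstacle here: the genuinely new input beyond \cref{sec: amalg_free_prod} is \cref{prop: vanishing_Tor1_HNN}, already available, and the only mild care needed is to use the single-vertex graph-of-rings structure of $R_G$ (with semisimple edge ring) so that \cref{thm: induced_submod} and \cref{prop: proj_mod_GR} apply in the form stated above.
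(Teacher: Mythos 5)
Your proposal matches the paper's proof, which simply observes that the arguments of \cref{sec: amalg_free_prod} carry over verbatim once \cref{prop: vanishing_Tor1_HNN} is in place, since \cref{thm: induced_submod} and \cref{prop: proj_mod_GR} apply to any finite graph of rings with semisimple edge rings — precisely the two points you single out. One small imprecision: to conclude that $R_G$ is left hereditary you should say that induced modules over the semisimple vertex ring $\mathcal{R}_{\scalebox{0.8}{$\C[A]$}}$ are \emph{projective} (every $\mathcal{R}_{\scalebox{0.8}{$\C[A]$}}$-module is projective, and extension of scalars along $\mathcal{R}_{\scalebox{0.8}{$\C[A]$}}\to R_G$ preserves projectivity), not merely flat; flatness of all ideals only yields weak dimension $\leq 1$, whereas hereditariness requires projectivity, so the stated reason does not support the conclusion even though the conclusion is correct.
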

    \begin{proof}
        Now that we have established \cref{prop: vanishing_Tor1_HNN}, the proofs in the previous section work verbatim for the HNN setting. We highlight that \cref{thm: induced_submod} and \cref{prop: proj_mod_GR} are the only results that we used for the coproduct structure, but they hold in the greater generality of graph of rings with finite associated graph, and hence for the HNN of rings construction.
    \end{proof}

    \begin{thm}\label{thm: AAC_HNN_prod}
        Let $A$ be a group satisfying the algebraic Atiyah conjecture over $\C$ and $C$ a finite group. Then the HNN-extension $G = A \ast_C$ satisfies the algebraic Atiyah conjecture over $\C$.
    \end{thm}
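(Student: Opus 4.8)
The plan is to imitate the proof of \cref{thm: AAC_amal_prod} verbatim, feeding in part (v) of \cref{thm: SAC_HNN_localization_&_K0} where that argument used \cref{cor: K0_amalgamated}.

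First I would record that, since $A$ satisfies the algebraic Atiyah conjecture over $\C$, it also satisfies the strong Atiyah conjecture over $\C$; this is the implication noted right after the definition of the algebraic Atiyah conjecture, which follows from \cref{prop: vNr_proj_mod} together with \cref{prop: preserve_dim_gps}. Consequently the hypotheses of \cref{thm: SAC_HNN_localization_&_K0} are met, and part (v) of that theorem gives that the map $K_0(\mathcal{R}_{\scalebox{0.8}{$\C[A]$}}) \to K_0(\mathcal{R}_{\scalebox{0.8}{$\C[G]$}})$ induced by the inclusion $\mathcal{R}_{\scalebox{0.8}{$\C[A]$}} \subseteq \mathcal{R}_{\scalebox{0.8}{$\C[G]$}}$ is surjective.

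Next I would combine this with the surjectivity of the algebraic Atiyah map of $A$. Every finite subgroup of $A$ is a finite subgroup of $G$, so there is an obvious inclusion $\bigoplus_{F \leqslant A,\, |F|<\infty} K_0(\C[F]) \hookrightarrow \bigoplus_{F \leqslant G,\, |F|<\infty} K_0(\C[F])$ of the domains of the two algebraic Atiyah maps, and the restriction of the algebraic Atiyah map of $G$ to the sub-sum over $F \leqslant A$ coincides with the composite
\[
\bigoplus_{F \leqslant A,\, |F|<\infty} K_0(\C[F]) \longrightarrow K_0(\mathcal{R}_{\scalebox{0.8}{$\C[A]$}}) \longrightarrow K_0(\mathcal{R}_{\scalebox{0.8}{$\C[G]$}}),
\]
by functoriality of $K_0$ (using that $\C[F] = \mathcal{R}_{\scalebox{0.8}{$\C[F]$}}$ for a finite group $F$ and that all the ring homomorphisms in sight are the evident inclusions). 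The first arrow is surjective because $A$ satisfies the algebraic Atiyah conjecture, and the second is surjective by the previous paragraph, so the composite is surjective. A fortiori the algebraic Atiyah map of $G$ is surjective, which is what we wanted.

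I do not expect any genuine obstacle: all the substance has already been absorbed into \cref{thm: SAC_HNN_localization_&_K0}, whose proof in turn rests on the vanishing of $\Tor_1$ established in \cref{prop: vanishing_Tor1_HNN}. The only point requiring (entirely routine) verification is the commutativity of the triangle of $K_0$-maps attached to the inclusions $\C[F] \subseteq \mathcal{R}_{\scalebox{0.8}{$\C[A]$}} \subseteq \mathcal{R}_{\scalebox{0.8}{$\C[G]$}}$ and $\C[F] \subseteq \mathcal{R}_{\scalebox{0.8}{$\C[G]$}}$, which is immediate from functoriality of $K_0$.
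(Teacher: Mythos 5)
Your proposal is correct and follows essentially the same route as the paper's proof: record that the algebraic Atiyah conjecture for $A$ implies the strong Atiyah conjecture for $A$, invoke part (v) of \cref{thm: SAC_HNN_localization_&_K0} to get surjectivity of $K_0(\mathcal{R}_{\scalebox{0.8}{$\C[A]$}}) \to K_0(\mathcal{R}_{\scalebox{0.8}{$\C[G]$}})$, and compose with the surjective algebraic Atiyah map of $A$. The paper states this in three lines; your version simply spells out the commutativity of the triangle of $K_0$-maps, which the paper leaves implicit.
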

    \begin{proof}
        By assumption, the algebraic Atiyah map of $A$ is surjective and $A$ satisfies the strong Atiyah conjecture over $\C$. Thus, by \cref{thm: SAC_HNN_localization_&_K0} the map $K_0(\mathcal{R}_{\scalebox{0.8}{$\C[A]$}}) \rightarrow K_0(\mathcal{R}_{\scalebox{0.8}{$\C[G]$}})$ is surjective. Composing the Atiyah map of $A$ with the former map shows that the Atiyah map of $G$ is surjective.
    \end{proof}

\section{Graphs of groups with finite edge groups} \label{sec: GG}

    In this section we combine the statements on amalgamated free products and HNN-extensions to prove the main results of the article. Recall that the upshots from \cref{sec: amalg_free_prod} and \cref{sec: HNN_ext} hold for a subfield $K \subseteq \C$ closed under complex conjugation. We begin with the strong Atiyah conjecture for the graph of groups construction. 

    \begin{proof}[Proof of \cref{thm: SAC_GG}]
        The strong Atiyah conjecture needs only to be checked over finitely presented modules, or by duality over matrices that have finitely many entries. Now, given a matrix there is a subgroup $H$ which is the fundamental group of a finite subgraph of groups of $\mathscr{G}_\Gamma$ satisfying that the group algebra $K[H]$ contains all the entries of the matrix. Hence, according to \cref{prop: preserve_dim_gps}, it suffices to check that $H$ satisfies the strong Atiyah conjecture over $K$. But this clearly follows from the combination of \cref{thm: SAC_amal_prod} and \cref{thm: SAC_HNN_localization_&_K0}.
    \end{proof}

    We continue with the universal localization. Given a graph of groups $\mathscr{G}_\Gamma = (G_v, G_e)$ with fundamental group $G$, one can always consider the associated graph of rings $\mathscr{RG}_\Gamma = (\mathcal{R}_{\scalebox{0.8}{$K[G_v]$}}, \mathcal{R}_{\scalebox{0.8}{$K[G_e]$}})$ where the ring homomorphisms are induced from the group homomorphisms of the graph of groups. Plainly, $K[G] \subseteq \mathscr{RG}_\Gamma$. Furthermore, as in the former sections, we have a commutative diagram of ring homomorphisms
    \[
    \begin{tikzcd}
        \mathscr{RG}_\Gamma \arrow[rr, "\phi"]&  & \UO(G)\\
        & K[G] \arrow[ur, hook] \arrow[lu, hook']&  
    \end{tikzcd}
    \]
    where the image of $\phi$ is once again contained in $\mathcal{R}_{\scalebox{0.8}{$K[G]$}}$. We now establish that $\mathcal{R}_{\scalebox{0.8}{$K[G]$}}$ is the universal localization of all full maps over $\mathscr{RG}_\Gamma$ with respect to the $L^2$-dimension.
    
    \begin{proof}[Proof of \cref{thm: GG_univ_local_&_K_0}]
        Since $\Gamma$ is countable, let $\{ \Gamma_n : n\geq 0\}$ be an exhausted nested sequence of finite subgraphs covering $\Gamma$ and such that $\Gamma_n$ has $n$ edges. Set $H_n$ to be the fundamental group of the subgraph of groups $\mathscr{G}_{\Gamma_n}$. From the construction of $\mathscr{RG}_{\Gamma}$, there is a natural direct limit structure
        \[
        \mathscr{RG}_{\Gamma} = \varinjlim_{n \geq 0} \mathscr{RG}_{\Gamma_n}
        \]
        where $\mathscr{RG}_{\Gamma_n}$ is the graph of rings associated to $H_n$. Similarly, the construction of the $\ast$-regular closure yields the next directed union
        \[
        \mathcal{R}_{\scalebox{0.8}{$K[G]$}} = \bigcup_{n \geq 0} \mathcal{R}_{\scalebox{0.8}{$K[H_n]$}}.
        \]
        The proof of \textit{(i)} is routine. First, recall that $K_0$-groups are compatible with direct limits, and so, we get
        \[
        K_0(\mathscr{RG}_{\Gamma}) = \varinjlim_{n \geq 0} K_0(\mathscr{RG}_{\Gamma_n}).
        \]
        Second, for finite graphs such as $\Gamma_n$, we can invoke \cref{thm: induced_submod}, which along with \cite[Lemma 4]{LinnellLuckSchick_OreCond}, shows the surjectivity of the map
        \[
        \oplus_{v \in V_n} K_0(\mathcal{R}_{\scalebox{0.8}{$K[G_v]$}}) \rightarrow K_0(\mathscr{RG}_{\Gamma_n})
        \]
        where $V_n$ stands for the subset of vertices of $\Gamma_n$. Thus, ranging over $n \geq 0$ shows \textit{(i)}. Observe that \textit{(ii)} follows straightforwardly from \textit{(i)} via a combination of \cref{prop: dimG_proj_faithful} and \cref{prop: preserve_dim_gps}.

        We show \textit{(iii)} and \textit{(iv)} together. First, we establish them for the finite subgrahps $\Gamma_n$, and then argue via direct limits to complete the proof. We work by induction on $n$. For $n = 0$, the map $\oplus_{v \in V_n} K_0(\mathcal{R}_{\scalebox{0.8}{$K[G_v]$}}) \rightarrow K_0(\mathcal{R}_{\scalebox{0.8}{$K[H_n]$}})$ is trivially surjective and $\mathcal{R}_{\scalebox{0.8}{$K[H_n]$}}$ is the universal localization of all full maps over $\mathscr{RG}_{\Gamma_n}$ with respect to the $L^2$-dimension as we are under the conditions of \cref{lem: full_vNr} according to \cref{prop: dimG_proj_faithful}. So assume now, for some $n \geq 0$, that $\mathcal{R}_{\scalebox{0.8}{$K[H_n]$}}$ is such universal localization of $\mathscr{RG}_{\Gamma_n}$ with respect to the set of maps $\Sigma_n$ and that the corresponding map of $K_0$-groups is surjective. We claim that $\mathcal{R}_{\scalebox{0.8}{$K[H_{n+1}]$}}$ is a universal localization of $\mathscr{RG}_{\Gamma_{n+1}}$. Indeed, if $\Gamma_{n+1}$ has an extra vertex, then $\mathscr{RG}_{\Gamma_{n+1}}$ is obtained from $\mathscr{RG}_{\Gamma_n}$ via a coproduct construction, and this case is identical to the one in the proof of \cref{prop: vanishing_Tor1_HNN}. So let us assume that $\Gamma_{n+1}$ differs from $\Gamma_n$ only in the edge $e$ with associated map $\widetilde{\sigma}$. By \cref{thm: SAC_HNN_localization_&_K0}, $\mathcal{R}_{\scalebox{0.8}{$K[H_{n+1}]$}}$ is a universal localization of the HNN ring
        \[
        \mathcal{R}_{\scalebox{0.8}{$K[H_n]_{\scalebox{0.8}{$\mathcal{R}_{\scalebox{0.8}{$K[G_e]$}}$}}$}} \langle t, t^{-1}; \widetilde{\sigma} \rangle
        \]
        with respect to a set of full maps that we denote by $\Sigma'_{n+1}$. Not only that, it also states that the map $K_0(\mathcal{R}_{\scalebox{0.8}{$K[H_n]$}}) \rightarrow K_0(\mathcal{R}_{\scalebox{0.8}{$K[H_n]_{\scalebox{0.8}{$\mathcal{R}_{\scalebox{0.8}{$K[G_e]$}}$}}$}} \langle t, t^{-1}; \widetilde{\sigma} \rangle)$ is surjective. Thus, by induction hypothesis and \cref{thm: induced_submod}, the collection of maps $\Sigma'_{n+1}$ is between stably induced finitely generated projectives over $\mathscr{RG}_{\Gamma_{n+1}}$. Hence, according to \cref{prop: local_transitive}
        \[
        \mathcal{R}_{\scalebox{0.8}{$K[H_{n+1}]$}} = (\mathscr{RG}_{\Gamma_{{n+1}_{\scalebox{0.8}{$\Sigma_n$}}}})_{\Sigma_{n+1}'} = \mathscr{RG}_{\Gamma_{{n+1}_{\scalebox{0.8}{$\Sigma_{n+1}$}}}}
        \]
        where $\Sigma_{n+1} = \Sigma_n \cup \overline{\Sigma_{n+1}}$ and $\overline{\Sigma_{n+1}}$ is the set of induced maps that are stably associated to elements of $\Sigma'_{n+1}$. This shows the claim. However, to finish the induction it remains to prove that the localization is done over all full maps with respect to the $L^2$-dimension. To this end, we show that we are under the conditions of \cref{thm: recogn_universal_localization}. Indeed, from \cref{thm: induced_submod} we deduce that $\mathscr{RG}_{\Gamma_{n+1}}$ is a hereditary ring since by hypothesis the vertex rings are semisimple according to \cref{prop: Atiyah_implies_semisimple}. Clearly, the map from $\mathscr{RG}_{\Gamma_{n+1}}$ to $\mathcal{R}_{\scalebox{0.8}{$K[H_{n+1}]$}}$ is epic. In addition, from a combination of \cref{prop: proj_mod_GR} and \cite[Lemma 4]{LinnellLuckSchick_OreCond}, the map
        \[
        \oplus_{v \in V_{n+1}} K_0(\mathcal{R}_{\scalebox{0.8}{$K[G_v]$}}) \rightarrow K_0(\mathscr{RG}_{\Gamma_{n+1}})
        \]
        is surjective, so the induced $\SMRF$ on $\mathscr{RG}_{\Gamma_{n+1}}$ is faithful over finitely generated projective modules by \cref{prop: dimG_proj_faithful} and \cref{prop: preserve_dim_gps}. Moreover, $H_{n+1}$ satisfies the strong Atiyah conjecture over $K$ by \cref{thm: SAC_GG}, and hence, $\mathcal{R}_{\scalebox{0.8}{$K[H_{n+1}]$}}$ is a semisimple ring according to \cref{prop: Atiyah_implies_semisimple}. Finally, from \cite[Theorem 4.7 \& Theorem 4.8]{Schofield85} it holds that 
        \[
        \Tor_1^{\scalebox{0.8}{$\mathscr{RG}_{\Gamma_{n+1}}$}}(\mathcal{R}_{\scalebox{0.8}{$K[H_{n+1}]$}} , \mathcal{R}_{\scalebox{0.8}{$K[H_{n+1}]$}}) = 0
        \]
        $\mathcal{R}_{\scalebox{0.8}{$K[H_{n+1}]$}}$ being already a universal localization of $\mathscr{RG}_{\Gamma_{n+1}}$. Summing up, we can apply \cref{thm: recogn_universal_localization} to complete the localization part of the induction. With regard to the map of $K_0$-groups, we are able to use \cref{prop: project_mod_localization}. Therefore, the map $K_0(\mathscr{RG}_{\Gamma_{n+1}}) \rightarrow K_0(\mathcal{R}_{\scalebox{0.8}{$K[H_{n+1}]$}})$ is surjective, and thus the induction follows from \textit{(i)}.

        To conclude, we make explicit the direct limit arguments. The discussion above shows that there is a natural directed system between the localizations since the sets in $\{ \Sigma_n : n \geq  0 \}$ form an increasing union. Therefore, if we define $\Sigma = \cup_{n \geq 0} \Sigma_n$, we have
        \[
        \mathcal{R}_{\scalebox{0.8}{$K[G]$}} = \bigcup_{n \geq 0} \mathcal{R}_{\scalebox{0.8}{$K[H_n]$}} = \bigcup_{n \geq 0} \mathscr{RG}_{\Gamma_{n_{\scalebox{0.8}{$\Sigma_{n}$}}}} = \mathscr{RG}_{\Gamma_{\scalebox{0.8}{$\Sigma$}}}.
        \]
        Note that all the maps in $\Sigma$ are full with respect to the $L^2$-dimension by \cref{lem: invertiv_is_full}. Hence, it only remains to show that all full maps over $\mathscr{RG}_{\Gamma}$ are invertible over $\mathcal{R}_{\scalebox{0.8}{$K[G]$}}$. Recall that there is an explicit construction for universal localizations via matrices (see \cref{subsec: univ_local}). So let $a$ be the endomorphism associated to a full map $\alpha$ over finitely generated projective $\mathscr{RG}_{\Gamma}$-modules. Since matrices have only finitely many entries and $\mathscr{RG}_{\Gamma} = \varinjlim_{n \geq 0} \mathscr{RG}_{\Gamma_n}$, we can assume that $a$ is realized as a map between finitely generated projective $\mathscr{RG}_{\Gamma_n}$-modules for some $n \geq 0$. This map has to be full over $\mathscr{RG}_{\Gamma_n}$ since $\alpha$ is full, and hence, it must be invertible over $\mathcal{R}_{\scalebox{0.8}{$K[H_n]$}}$, and consequently over $\mathcal{R}_{\scalebox{0.8}{$K[G]$}}$. Therefore, $(\mathcal{R}_{\scalebox{0.8}{$K[G]$}}, \phi)$ is the universal localization of $\mathscr{RG}_{\Gamma}$ at all full maps with respect to the $L^2$-dimension, which finishes the proof of \textit{(iii)}. Finally, \textit{(iv)} follows from the direct limit
        \[
        K_0(\mathcal{R}_{\scalebox{0.8}{$K[G]$}}) = \varinjlim_{n \geq 0} K_0(\mathcal{R}_{\scalebox{0.8}{$K[H_n]$}})
        \]
        and the surjectivity of the map $\oplus_{v \in V_n} K_0(\mathcal{R}_{\scalebox{0.8}{$K[G_v]$}}) \rightarrow K_0(\mathcal{R}_{\scalebox{0.8}{$K[H_n]$}})$ showed by induction for every $n \geq 0$.
    \end{proof}

\section{Class \texorpdfstring{$\mathcal{T}$} \ \ and virtually-\{locally indicable\} groups} \label{sec: class_T}

    In this section we study the \textit{class $\mathcal{T}$} defined in the introduction. It is constructed in the spirit of Linnell's class $\mathcal{C}$ but we allow the graph of groups construction instead of extensions by elementary amenable groups: the class $\mathcal{T}$ is the smallest class of groups which
    \begin{enumerate}[label=(\roman*)]
        \item contains all finite groups;
        \item is closed under directed unions; and
        \item satisfies $G \in \mathcal{T}$ whenever $G$ is the fundamental group of a graph of groups in $\mathcal{T}$.
    \end{enumerate}
    We believe that the class $\mathcal{T}$ is a reasonable candidate to enjoy the universal localization property as well as the Atiyah conjectures. Nevertheless, this goal is beyond the scope of this article and we restrict to the following subclass. We recall that a group $G$ is \textit{virtually-\{locally indicable\}} if there is some finite index subgroup $H$ which is locally indicable, that is, each of its non-trivial finitely generated subgroups maps homomorphically onto $\Z$.

    \begin{defn}
        Let $\mathcal{T}_{\scalebox{0.7}{$\mathcal{VLI}$}}$ denote the smallest class of groups which
        \begin{enumerate}[label=(\roman*)]
            \item contains all finite groups;
            \item is closed under directed unions; and
            \item satisfies $G \in \mathcal{T}_{\scalebox{0.7}{$\mathcal{VLI}$}}$ whenever $G$ is the fundamental group of a graph of groups in $\mathcal{T}_{\scalebox{0.7}{$\mathcal{VLI}$}}$ and $G$ is in addition virtually-\{locally indicable\}.
        \end{enumerate}
    \end{defn}

    We shall see that every group in the class $\mathcal{T}_{\scalebox{0.7}{$\mathcal{VLI}$}}$ does satisfy the strong, algebraic and center-valued Atiyah conjecture, and moreover, that its $\ast$-regular closure is a specific universal localization of the group algebra. The main reason to choose (virtually-) locally indicable groups, is due to the study of the graph of groups construction and locally indicable groups done in \cite{FisherPeralta_Kaplansky'sZD3mfld} by Sam P. Fisher and the author. Concretely, given a graph of groups $\mathscr{G}_\Gamma = (G_v, G_e)$ with fundamental group $G$ and associated graph of rings $\mathscr{RG}_\Gamma = (\mathcal{R}_{\scalebox{0.8}{$\C[G_v]$}}, \mathcal{R}_{\scalebox{0.8}{$\C[G_e]$}})$, \cite[Lemma 7.6]{FisherPeralta_Kaplansky'sZD3mfld} establishes that $\mathcal{R}_{\scalebox{0.8}{$\C[G]$}}$ is the universal localization over all full matrices of $\mathscr{RG}_\Gamma$ with respect to the $L^2$-dimension. For the purpose of showing \cref{thm: class_TVLI}, we exhibit a stratified description of the class $\mathcal{T}_{\scalebox{0.7}{$\mathcal{VLI}$}}$ that enables us to proceed by induction. Namely, for each ordinal $a$, the class of groups $\mathcal{Y}_a$ is defined inductively by $\mathcal{Y}_0 = $ \{all finite groups\} and $\mathcal{Y}_{a+1} = $ \{virtually-\{locally indicable\} groups that are the fundamental group of a graph of groups with vertex and edge groups locally in $\mathcal{Y}_a$ \} and $\mathcal{Y}_b = \cup_{ a < b} \mathcal{Y}_a$ if $b$ is a limit ordinal, where a group is locally in $\mathcal{Y}_a$ if each of its non-trivial finitely generated subgroups lies in $\mathcal{Y}_a$. Set $\mathcal{Y} = \cup_{a \geq 0} \mathcal{Y}_a$, then we have:

    \begin{lem}\label{lem: Ya_class_TVLI} The class $\mathcal{Y}$ is precisely the class $\mathcal{T}_{\scalebox{0.7}{$\mathcal{VLI}$}}$. 
    \end{lem}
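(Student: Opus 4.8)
The plan is to prove the two inclusions $\mathcal{Y}\subseteq\mathcal{T}_{\scalebox{0.7}{$\mathcal{VLI}$}}$ and $\mathcal{T}_{\scalebox{0.7}{$\mathcal{VLI}$}}\subseteq\mathcal{Y}$ separately. For the first, I would argue by transfinite induction on $a$ that $\mathcal{Y}_a\subseteq\mathcal{T}_{\scalebox{0.7}{$\mathcal{VLI}$}}$. The base case is exactly property (i) in the definition of $\mathcal{T}_{\scalebox{0.7}{$\mathcal{VLI}$}}$, and the limit case is immediate from $\mathcal{Y}_b=\bigcup_{a<b}\mathcal{Y}_a$. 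For the successor step, let $G\in\mathcal{Y}_{a+1}$, so $G$ is virtually-\{locally indicable\} and is the fundamental group of a graph of groups all of whose vertex and edge groups are locally in $\mathcal{Y}_a$. A group $H$ that is locally in $\mathcal{Y}_a$ is the directed union of its finitely generated subgroups, each of which is either trivial or lies in $\mathcal{Y}_a\subseteq\mathcal{T}_{\scalebox{0.7}{$\mathcal{VLI}$}}$ by the inductive hypothesis; since the trivial group is finite and $\mathcal{T}_{\scalebox{0.7}{$\mathcal{VLI}$}}$ is closed under directed unions, $H\in\mathcal{T}_{\scalebox{0.7}{$\mathcal{VLI}$}}$. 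Hence the graph of groups presenting $G$ has all its vertex and edge groups in $\mathcal{T}_{\scalebox{0.7}{$\mathcal{VLI}$}}$, and since $G$ is virtually-\{locally indicable\}, property (iii) gives $G\in\mathcal{T}_{\scalebox{0.7}{$\mathcal{VLI}$}}$.

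For the reverse inclusion I would invoke the minimality of $\mathcal{T}_{\scalebox{0.7}{$\mathcal{VLI}$}}$: it suffices to check that $\mathcal{Y}$ itself satisfies (i)--(iii). Property (i) is clear. Before checking (ii) and (iii) I would record two facts. \emph{(A) Each $\mathcal{Y}_a$ is closed under subgroups.} This holds stagewise: subgroups of finite groups are finite; if $H\leqslant G\in\mathcal{Y}_{a+1}$, Bass--Serre theory realizes $H$ as the fundamental group of a graph of groups whose vertex and edge groups are the intersections of $H$ with conjugates of the vertex and edge groups of $G$, and each such intersection is a subgroup of a group locally in $\mathcal{Y}_a$, hence itself locally in $\mathcal{Y}_a$; moreover virtually-\{locally indicable\} passes to subgroups (intersect a finite-index locally indicable subgroup with $H$ and use that local indicability is subgroup-closed); the limit case is trivial. \emph{(B) $\mathcal{Y}_a\subseteq\mathcal{Y}_{a+1}$, so $(\mathcal{Y}_a)_a$ is non-decreasing.} Indeed every group in $\mathcal{Y}_a$ is virtually-\{locally indicable\} (true at stage $0$ and visibly preserved upward) and, by (A), is locally in $\mathcal{Y}_a$; presenting it by the one-vertex graph of groups with that vertex group places it in $\mathcal{Y}_{a+1}$.

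Granting (A) and (B), I would verify (iii) for $\mathcal{Y}$ as follows. Let $G$ be virtually-\{locally indicable\} and the fundamental group of a graph of groups with all vertex and edge groups in $\mathcal{Y}$. Every finitely generated subgroup of $G$ is contained in the fundamental group of some finite subgraph of groups, which embeds in $G$ and has only finitely many vertex and edge groups, each in $\mathcal{Y}$; taking the maximum of their finitely many stages and using (B), this subgroup lies in a single $\mathcal{Y}_a$, hence in $\mathcal{Y}_{a+1}$ by (A) and the one-vertex trick. Thus $G$ is locally in $\mathcal{Y}$, and since $G$ has only a set of finitely generated subgroups we may let $c$ be the supremum of the associated stages, so $G$ is locally in $\mathcal{Y}_c$; as $G$ is virtually-\{locally indicable\}, $G\in\mathcal{Y}_{c+1}$. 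The verification of (ii) is the same supremum-of-stages argument: the finitely generated subgroups of a directed union of members of $\mathcal{Y}$ all lie in a single member, hence in $\mathcal{Y}$, so the union is locally in $\mathcal{Y}_c$ for a single ordinal $c$ and therefore lies in $\mathcal{Y}$.

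The step I expect to be the main obstacle is fact (A) together with the uniform stage-bounding it feeds into: one must use Bass--Serre theory carefully to decompose a subgroup of a graph of groups as a graph of groups whose pieces are visibly subgroups of conjugates of the original pieces, check that subgroups and conjugates of groups locally in $\mathcal{Y}_a$ are again locally in $\mathcal{Y}_a$, and then control the (set-many) finitely generated pieces of a group arising from an infinite graph of groups or a directed union by passing to an honest supremum of their stages so that the group reappears at a bounded successor stage. The remaining bookkeeping — closure of local indicability and of virtual local indicability under subgroups, and compatibility of $\pi_1$ with exhaustions by finite subgraphs of groups — is routine.
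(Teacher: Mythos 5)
Your proof is correct and follows the same strategy as the paper's: $\mathcal{Y}\subseteq\mathcal{T}_{\scalebox{0.7}{$\mathcal{VLI}$}}$ by transfinite induction, and the reverse by minimality after verifying that $\mathcal{Y}$ satisfies the three defining closure properties. The paper's proof is extremely terse (essentially declarative), whereas you supply the genuinely needed bookkeeping — closure of each $\mathcal{Y}_a$ under subgroups via Bass--Serre theory, monotonicity of the filtration, and bounding the stages of the set-many finitely generated subgroups by a single ordinal — all of which is sound.
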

    \begin{proof}
        Clearly, $\mathcal{Y} \subseteq \mathcal{T}_{\scalebox{0.7}{$\mathcal{VLI}$}}$. The other direction also holds since $\mathcal{Y}$ contains finite groups, is closed under directed unions and is closed under the graph of groups constructions provided that the fundamental group is virtually-\{locally indicable\}.
    \end{proof}

    The argument for \cref{thm: class_TVLI} is divided into two main parts. First, we show it for graph of groups that are virtually-\{universal localizations\}, and then we use some routine techniques to reduce to this case. We recall that given $K \subseteq \C$ a field closed under complex conjugation, $\Sigma(K,G)$ stands for the set of matrices over $K[G]$ that are invertible over $\UO(G)$.

    \begin{prop}\label{prop: virt_localization_and_GG}
        Let $G$ be a fundamental group of a finite graph of groups $\mathscr{G}_\Gamma = (G_v, G_e)$, $H$ a finite index normal subgroup of $G$ and $K \subseteq \C$ a field closed under complex conjugation. Assume that the following conditions hold:
        \begin{enumerate}[label=(\roman*)]
            \item Each $G_v$ and $G_e$ satisfies the strong Atiyah conjecture over $K$;
            \item each $\mathcal{R}_{\scalebox{0.8}{$K[G_v]$}}$ and $\mathcal{R}_{\scalebox{0.8}{$K[G_e]$}}$ is the universal localization of $K[G_v]$ and $K[G_e]$ over $\Sigma(K,G_v)$ and $\Sigma(K, G_e)$, respectively; and
            \item the ring $\mathcal{R}_{\scalebox{0.8}{$K[H]$}}$ is semisimple and a universal localization of $K[H]$ over $\Sigma(K,H)$.
        \end{enumerate}
        Then $G$ satisfies the strong Atiyah conjecture over $K$ and $\mathcal{R}_{\scalebox{0.8}{$K[G]$}}$ is the universal localization of $K[G]$ over $\Sigma(K,G)$. Moreover, if in addition each $G_v$ and $G_e$ satisfies the algebraic Atiyah conjecture over $K$, so does $G$.
    \end{prop}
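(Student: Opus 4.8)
The plan is to deduce the statement from two separate claims: (a) $\mathcal{R}_{K[G]}$ equals the universal localization $K[G]_{\Sigma(K,G)}$ of $K[G]$ at $\Sigma(K,G)$, which will rely only on the finite-index subgroup $H$ and on hypothesis (iii); and (b) $G$ satisfies the strong, respectively algebraic, Atiyah conjecture over $K$, which will be obtained by running the arguments of \cref{sec: amalg_free_prod}--\cref{sec: GG} over the graph of $\ast$-regular closures $\mathscr{RG}_\Gamma = (\mathcal{R}_{K[G_v]}, \mathcal{R}_{K[G_e]})$ after hypotheses (i)--(ii) are used to realise it as a universal localization of $K[G]$. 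Note first that $\lcm(G)$ is finite: finite subgroups of the fundamental group of a finite graph of groups are conjugate into vertex groups, so $\lcm(G)$ is the least common multiple of the (finite, by (i)) numbers $\lcm(G_v)$.

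For (a), I would first identify $\mathcal{R}_{K[G]}$ with the crossed product $\mathcal{R}_{K[H]}\ast G/H$ inside $\UO(G)$. Since relative inverses are preserved under the $\ast$-embedding $\UO(H)\subseteq\UO(G)$, one has $\mathcal{R}_{K[H]}\subseteq\mathcal{R}_{K[G]}$, hence $\mathcal{R}_{K[H]}\ast G/H\subseteq\mathcal{R}_{K[G]}$; conversely $\mathcal{R}_{K[H]}\ast G/H$ is a $\ast$-subring of $\UO(G)$, hence semiprime by \cite[Lemma 2.4(ii)]{LinnellDivRings93}, and it is left Artinian, being finitely generated as a module over the semisimple ring $\mathcal{R}_{K[H]}$ of (iii); so it is semisimple, in particular $\ast$-regular, and as it contains $K[G]$ it equals $\mathcal{R}_{K[G]}$ by minimality. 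Thus $\mathcal{R}_{K[G]}$ is semisimple. Next, every matrix of $\Sigma(K,G)$ becomes invertible over $\mathcal{R}_{K[G]}$: its relative inverse in the $\ast$-regular ring $\Mat_n(\mathcal{R}_{K[G]})$ must be a genuine two-sided inverse, since the matrix is already invertible over $\Mat_n(\UO(G))$; this produces a $K[G]$-ring homomorphism $\alpha\colon K[G]_{\Sigma(K,G)}\to\mathcal{R}_{K[G]}$. In the other direction, (iii) gives $\mathcal{R}_{K[H]}=K[H]_{\Sigma(K,H)}$, and since $\Sigma(K,H)\subseteq\Sigma(K,G)$ the composite $K[H]\hookrightarrow K[G]\to K[G]_{\Sigma(K,G)}$ extends uniquely to $\mathcal{R}_{K[H]}$; combining this extension with the image of a transversal of $H$ in $G$, and checking that the crossed-product relations are respected --- which is legitimate because $K[H]\to\mathcal{R}_{K[H]}$ is epic, so two ring maps out of $\mathcal{R}_{K[H]}$ agreeing on $K[H]$ agree --- yields a $K[G]$-ring homomorphism $\beta\colon\mathcal{R}_{K[G]}=\mathcal{R}_{K[H]}\ast G/H\to K[G]_{\Sigma(K,G)}$. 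As both $K[G]\to K[G]_{\Sigma(K,G)}$ and $K[G]\to\mathcal{R}_{K[G]}$ are epic (universal localizations are epic, and the second by \cite[Proposition 6.1]{JaikinBaseChange}), the composites $\alpha\beta$ and $\beta\alpha$ are the identities, so $\mathcal{R}_{K[G]}=K[G]_{\Sigma(K,G)}$.

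For (b), I would use hypothesis (ii) together with the transitivity of universal localization (\cref{prop: local_transitive}) and its compatibility with pushouts of rings to realise $\mathscr{RG}_\Gamma$ as a universal localization of $K[G]$ at $\Sigma_0:=\bigcup_v\Sigma(K,G_v)\cup\bigcup_e\Sigma(K,G_e)\subseteq\Sigma(K,G)$, localizing one vertex or edge ring at a time (the intermediate sets consisting of maps between free modules). By (i) and \cref{prop: Atiyah_implies_semisimple} all vertex and edge rings of $\mathscr{RG}_\Gamma$ are semisimple, so $\mathscr{RG}_\Gamma$ is hereditary (\cref{thm: induced_submod}), the pull-back along $\phi\colon\mathscr{RG}_\Gamma\to\mathcal{R}_{K[G]}$ of the $L^2$-dimension is faithful on finitely generated projectives (exactly as in \cref{prop: R_G_hereditary_faithful}, via \cref{prop: proj_mod_GR}, \cref{prop: preserve_dim_gps}, \cref{prop: dimG_proj_faithful}), $\oplus_v K_0(\mathcal{R}_{K[G_v]})\to K_0(\mathscr{RG}_\Gamma)$ is surjective (\cref{prop: proj_mod_GR} and \cite[Lemma 4]{LinnellLuckSchick_OreCond}), $\phi$ is epic, and $\mathcal{R}_{K[G]}$ is semisimple by (a). Since $\Sigma_0\subseteq\Sigma(K,G)$ and $\mathcal{R}_{K[G]}=K[G]_{\Sigma(K,G)}$, transitivity makes $\mathcal{R}_{K[G]}$ a universal localization of $\mathscr{RG}_\Gamma$ (the remaining maps being between induced, hence stably induced, projectives by \cref{prop: proj_mod_GR}), whence $\Tor_1^{\mathscr{RG}_\Gamma}(\mathcal{R}_{K[G]},\mathcal{R}_{K[G]})=0$ by \cite[Theorems 4.7 and 4.8]{Schofield85}, and flat base change along $\mathscr{RG}_\Gamma\to\mathcal{R}_{K[G]}\to\UO(G)$ (valid as $\mathcal{R}_{K[G]}$ is von Neumann regular) also gives $\Tor_1^{\mathscr{RG}_\Gamma}(\UO(G),\UO(G))=0$. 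With these vanishings the proof of \cref{thm: SAC_amal_prod} goes through verbatim with $\mathscr{RG}_\Gamma$ in place of $R_G$: a finitely presented $K[G]$-module is pushed to $\mathscr{RG}_\Gamma$, the image of the resulting $\UO(G)$-module is resolved by a finitely generated projective --- hence induced, by \cref{prop: proj_mod_GR} --- ideal $I=\oplus_v\mathscr{RG}_\Gamma\otimes_{\mathcal{R}_{K[G_v]}}I_v$, and its $\UO(G)$-dimension is $d-\sum_v\dim_{\UO(G_v)}(\UO(G_v)\otimes_{\mathcal{R}_{K[G_v]}}I_v)\in\tfrac{1}{\lcm(G)}\Z$ by the strong Atiyah conjecture for the $G_v$ (\cref{prop: Atiyah_conj_reg_closure}) and $\lcm(G_v)\mid\lcm(G)$; this proves the strong Atiyah conjecture for $G$. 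Applying \cref{thm: recogn_universal_localization} to $\phi$ shows $\mathcal{R}_{K[G]}$ is the universal localization of $\mathscr{RG}_\Gamma$ at all full maps, so $K_0(\mathscr{RG}_\Gamma)\to K_0(\mathcal{R}_{K[G]})$ is surjective by \cref{prop: project_mod_localization}; composing with the previous surjection and using that each finite subgroup of $G$ is conjugate into some $G_v$, the algebraic Atiyah map of $G$ factors through surjections once each $G_v$ and $G_e$ satisfies the algebraic Atiyah conjecture, giving the final assertion.

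The main obstacle is part (a): pinning down $\mathcal{R}_{K[G]}$ as the crossed product $\mathcal{R}_{K[H]}\ast G/H$ and carrying out the $\alpha$/$\beta$ construction carefully --- in particular verifying that the unique extension of $K[H]\to K[G]_{\Sigma(K,G)}$ to $\mathcal{R}_{K[H]}$ is compatible with conjugation by the transversal elements --- together with the bookkeeping that replacing the vertex and edge group rings by their $\ast$-regular closures turns $K[G]$ into the universal localization $\mathscr{RG}_\Gamma$, which is what lets (a) be upgraded to ``$\mathcal{R}_{K[G]}$ is a universal localization of $\mathscr{RG}_\Gamma$''. Once these structural points are settled, the remainder only recombines results already available in \cref{sec: prelims}--\cref{sec: GG}.
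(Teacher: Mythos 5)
Your proof is correct and follows essentially the same route as the paper's: identify $\mathcal{R}_{\scalebox{0.8}{$K[G]$}}$ with the crossed product $\mathcal{R}_{\scalebox{0.8}{$K[H]$}}\ast G/H$, show this is semisimple and is the universal localization of $K[G]$ over $\Sigma(K,G)$, and then use the resulting $\Tor_1$-vanishing over $\mathscr{RG}_\Gamma$ to run the arguments of \cref{sec: amalg_free_prod}--\cref{sec: GG}. The only real difference is that you unpack the paper's appeal to \cite[Lemma 4.5]{LinnellDivRings93} into an explicit $\alpha/\beta$ construction using epicness, and you make explicit why hypothesis \textit{(ii)} yields that $\mathscr{RG}_\Gamma$ is a universal localization of $K[G]$ — steps the paper treats as immediate.
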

    \begin{proof}
        To begin with, the subring $\mathcal{R}_{\scalebox{0.8}{$K[H]$}} \ast G/H \subseteq \mathcal{R}_{\scalebox{0.8}{$K[G]$}}$ is semisimple being $G/H$ a finite group and $\mathcal{R}_{\scalebox{0.8}{$K[H]$}}$ a semisimple ring of characteristic $0$; see, for instance, \cite[Lemma 8.12 (iv)]{ReichThesis}. Thus, $\mathcal{R}_{\scalebox{0.8}{$K[H]$}} \ast G/H$ is a $\ast$-regular ring containing $K[G]$. So, by definition of the $\ast$-regular closure,
        $\mathcal{R}_{\scalebox{0.8}{$K[G]$}} = \mathcal{R}_{\scalebox{0.8}{$K[H]$}} \ast G/H$. Moreover, by \cite[Lemma 4.5] {LinnellDivRings93} $\mathcal{R}_{\scalebox{0.8}{$K[G]$}}$ is a universal localization of $K[G]$ over $\Sigma(K,H)$. We claim that $K[G] \hookrightarrow \mathcal{R}_{\scalebox{0.8}{$K[G]$}}$ is universal $\Sigma(K,G)$-inverting. Let $A$ be a matrix over $K[G]$ that becomes invertible over $\UO(G)$. Then $A$ as a matrix over $\mathcal{R}_{\scalebox{0.8}{$K[G]$}}$ is also invertible (see, for instance, \cite[Proposition 13.15]{ReichThesis}). So $\mathcal{R}_{\scalebox{0.8}{$K[G]$}}$ is the universal localization of $K[G]$ over $\Sigma(K,G)$.
        
        On the other hand, we have by hypothesis that the graph of rings $\mathscr{RG}_{\Gamma}$ is a universal localization of $K[G]$. Thus, invoking \cite[Theorem 4.7 \& Theorem 4.8]{Schofield85} it holds that
        \[
        \Tor_1^{\scalebox{0.8}{$\mathscr{RG}_{\Gamma}$}}(\mathcal{R}_{\scalebox{0.8}{$K[G]$}}, \mathcal{R}_{\scalebox{0.8}{$K[G]$}}) = \Tor_1^{\scalebox{0.8}{$K[G]$}}(\mathcal{R}_{\scalebox{0.8}{$K[G]$}}, \mathcal{R}_{\scalebox{0.8}{$K[G]$}}) = 0.
        \]
        From this vanishing, the results follow. If the vertex and edge groups satisfy the strong Atiyah conjecture, we can repeat the argument of the proof of \cref{thm: SAC_amal_prod} to conclude that $G$ satisfies the strong Atiyah conjecture. Note that $\mathcal{R}_{\scalebox{0.8}{$K[G]$}}$ is also the universal localization of all full maps over $\mathscr{RG}_{\Gamma}$ with respect to the $L^2$-dimension because the proof of \cref{thm: univ_local_amalg_prod} can be reproduced via \cref{thm: recogn_universal_localization}. Therefore, if in addition the vertex and edge groups satisfy the algebraic Atiyah conjecture, we proceed as in the proof of \cref{thm: AAC_amal_prod} invoking \cref{prop: project_mod_localization} to get that $G$ satisfies the algebraic Atiyah conjecture.
    \end{proof}
    
    Now we are ready to show the analog of \cite[Theorem 1.5]{LinnellDivRings93} for the class $\mathcal{T}_{\scalebox{0.7}{$\mathcal{VLI}$}}$.

    \begin{proof}[Proof of \cref{thm: class_TVLI}]
        According to \cref{lem: Ya_class_TVLI}, it suffices to show \textit{(i)} and \textit{(ii)} for each $\mathcal{Y}_a$ with $a \geq 0$. We argue by induction on $a$. For $a = 0$, \textit{(i)} and \textit{(ii)} follow trivially because $\mathcal{Y}_0$ is the class of finite groups. Since $\Sigma(K,G)$-localizations and Atiyah conjectures behave well under direct limit constructions, we may assume that $a = b + 1$ for some ordinal $b$. Let $G$ be the fundamental group of a graph of groups $\mathscr{G}_\Gamma = (G_v, G_e)$ where $G_v$ and $G_e$ are locally in $\mathcal{Y}_b$ and such that $G$ is virtually-\{locally indicable\}. Note that $G$ is a directed union of subgroups in $\mathcal{Y}_a$ with finite associated graph of groups. So we may also assume that $\Gamma$ is a finite graph to prove \textit{(i)} and \textit{(ii)} for $G$. Let $H$ be the locally indicable subgroup with finite index in $G$, which without loss of generality is normal in $G$. We show that $G$ and $H$ are under the conditions of \cref{prop: virt_localization_and_GG}. 
        
        First, from the action of $G$ in its Bass--Serre tree, we obtain a graph of groups decomposition for $H$, say $\mathscr{H}_{\Gamma_H} = (H_v, H_e)$, where the vertex and edge groups inherit the property of being locally in $\mathcal{Y}_b$. Once again, a direct limit argument shows that all $H_v$ and $H_e$ satisfy \textit{(i)} and \textit{(ii)}. Thus the graph of rings $\mathscr{RH}_{\Gamma_H}$ is a universal localization over a set of matrices of $K[H]$ that become invertible over $\UO(H)$. Furthermore, $H$ is locally indicable, so $\mathcal{R}_{\scalebox{0.8}{$K[H]$}}$ is a division ring by \cite[Theorem 1.1]{JaikinLopez_Atiyah}. Not only that, according to \cite[Lemma 7.6]{FisherPeralta_Kaplansky'sZD3mfld} $\mathcal{R}_{\scalebox{0.8}{$K[H]$}}$ is the universal localization over the set of all full matrices of $\mathscr{RH}_{\Gamma_H}$, which we denote by $\Sigma_H$. Therefore, we can apply \cref{prop: local_transitive} to get that $\mathcal{R}_{\scalebox{0.8}{$K[H]$}}$ is a universal localization of $K[H]$ at some set of matrices $\Sigma$ that become invertible over $\UO(H)$. We claim that $K[H] \hookrightarrow \mathcal{R}_{\scalebox{0.8}{$K[H]$}}$ is universal $\Sigma(K,H)$-inverting. Indeed, if $A$ is a matrix over $K[H]$ that becomes invertible over $\UO(H)$, then $A$ is a full matrix over $\mathscr{RH}_{\Gamma_H}$ by \cref{lem: invertiv_is_full}, and so $A$ is already in $\Sigma_H$. So $\mathcal{R}_{\scalebox{0.8}{$K[H]$}}$ is the universal localization of $K[H]$ over $\Sigma(K,H)$, and we can invoke \cref{prop: virt_localization_and_GG} to conclude that $G$ satisfies \textit{(i)} and \textit{(ii)}. This completes the induction, and hence the proof.
    \end{proof}

    To end the section, we focus on $K_1(\mathcal{R}_{\scalebox{0.8}{$K[G]$}})$. Let us recall the definition of $K_1^w(R[G])$ given in \cite[Definition 3.1 \& Remark 3.2]{LuckLinnell_localization} for a ring $R$ with $\Z \subseteq R \subseteq \C$. Elements $[A]$ of $K_1^w(R[G])$ are given by square $n$-matrices $A$ over $R[G]$ which are not necessarily invertible but for which the operator $r_A^{(2)}: \ell^2 (G)^n \rightarrow \ell^2 (G)^n$ given by right multiplication with $A$ is a \textit{weak isomorphism}, that is, it is injective and has dense image. We require for such square matrices $A, B$ the following relations in $K_1^w(R[G])$:
    \[
    [AB] = [A] \cdot [B]; \quad \left[ \begin{pmatrix}
                                    A & \ast \\
                                    0 & B
                                \end{pmatrix} \right] = [A] \cdot [B].
    \]
    The usual $K_1$-group only considers invertible matrices subjected to the same relations.

    \begin{proof}[Proof of \cref{thm: Linnell_Luck_localization}]
        In \cref{thm: class_TVLI} we already showed that $\mathcal{R}_{\scalebox{0.8}{$F[G]$}}$ is a semisimple ring and the universal localization of $F[G]$ over $\Sigma(F, G)$. Therefore the right hand side isomorphisms follows from the next chain of isomorphisms
        \[
        K_1(\mathcal{R}_{\scalebox{0.8}{$F[G]$}}) \cong \prod_{i=1}^l K_1(\Mat_{n_i}(\mathcal{D}_i)) \cong \prod_{i=1}^l K_1(\mathcal{D}_i) \cong \prod_{i=1}^l \mathcal{D}_i^{\times}/[\mathcal{D}_i^{\times}, \mathcal{D}_i^{\times}]
        \]
        where $\prod_{i=1}^l \Mat_{n_i}(\mathcal{D}_i)$ is the Artin--Wedderburn decomposition associated to $\mathcal{R}_{\scalebox{0.8}{$F[G]$}}$ and the last isomorphism is given by the Dieudonn\'e determinant \cite[Corollary 2.2.6]{Rosenberg}.

        For the left part, we follow \cite[Remark 3.2]{LuckLinnell_localization}. Given a square $n$-matrix $A$ over $R[G]$, according to \cite[Theorem 6.24 \& Theorem 8.22(5)]{Luck02} the operator $r_A^{(2)}: \ell^2 (G)^n \rightarrow \ell^2 (G)^n$ is a weak isomorphism if and only if $A$ is invertible in $\UO(G)$. But since $\mathcal{R}_{\scalebox{0.8}{$F[G]$}}$ is a von Neumman regular ring, $A$ is invertible in $\UO(G)$ if and only if it is invertible in $\mathcal{R}_{\scalebox{0.8}{$F[G]$}}$ (see, for instance, \cite[Proposition 13.15]{ReichThesis}). Therefore the proof ends applying \cite[Theorem 1.2]{LuckLinnell_localization}.
    \end{proof}

\section{Further comments and conjectures}\label{sec: comments_questions}

    In the light of the arguments above, there is no other restriction to extend the graph of groups construction to arbitrary edge groups that satisfy the strong Atiyah conjecture over $K \subseteq \C$ a field closed under complex conjugation, rather than the vanishing of $\Tor_1^{\scalebox{0.8}{$\mathscr{RG}_\Gamma$}}(\UO(G),\UO(G))$. One possible approach is to generalize \cref{lem: amalg_prod_intersection_augment_ideal} to an arbitrary group $C$, though it seems that whenever $C$ is infinite the several topologies of bounded operators play a different role. On the other hand, it would be insightful to prove this vanishing via algebraic methods, and perhaps the fact that $\mathcal{R}_{\scalebox{0.8}{$K[C]$}}$ is a universal localization of $K[C]$ could be important to tackle this question. Thus we propose the following conjecture.

    \begin{conj} \label{conj: vanishing_Tor_1}
        Let $A$, $B$ and $C$ be three groups and let $G=A \ast_C B$ be their amalgamated free product. For $K \subseteq \C$ a field closed under complex conjugation, set $R_{G}:= \mathcal{R}_{\scalebox{0.8}{$K[A]$}} \ast_{\scalebox{0.8}{$\mathcal{R}_{\scalebox{0.8}{$K[C]$}}$}} \mathcal{R}_{\scalebox{0.8}{$K[B]$}}$. Assume in addition that $\mathcal{R}_{\scalebox{0.8}{$K[C]$}}$ is a univeral localization of $K[C]$. Then
        \[
        \Tor_1^{R_{G}}(\UO(G),\UO(G)) = 0.
        \]
    \end{conj}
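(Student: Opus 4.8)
The plan is to run the reduction in the proof of \cref{prop: vanishing_Tor1_amalg} as far as it goes — none of its initial steps uses the finiteness of $C$ — and only then to confront the surviving combinatorial statement. In detail: the Dicks--Mayer--Vietoris sequence \cite[Theorem 6]{Dicks_MayerVietoris} applies to $R_{G}=\mathcal{R}_{\scalebox{0.8}{$K[A]$}}\ast_{\scalebox{0.8}{$\mathcal{R}_{\scalebox{0.8}{$K[C]$}}$}}\mathcal{R}_{\scalebox{0.8}{$K[B]$}}$ for an arbitrary edge group, and since $\mathcal{R}_{\scalebox{0.8}{$K[A]$}}$ and $\mathcal{R}_{\scalebox{0.8}{$K[B]$}}$ remain von Neumann regular (hence $\UO(G)$ is flat over them), the two outer $\Tor_{1}$ terms vanish; one is thus reduced to injectivity of the diagonal map
\[
\UO(G)\otimes_{\scalebox{0.8}{$\mathcal{R}_{\scalebox{0.8}{$K[C]$}}$}}\UO(G)\longrightarrow(\UO(G)\otimes_{\scalebox{0.8}{$\mathcal{R}_{\scalebox{0.8}{$K[A]$}}$}}\UO(G))\oplus(\UO(G)\otimes_{\scalebox{0.8}{$\mathcal{R}_{\scalebox{0.8}{$K[B]$}}$}}\UO(G)).
\]
As the inclusions $K[H]\hookrightarrow\mathcal{R}_{\scalebox{0.8}{$K[H]$}}$ are epic for all groups \cite[Proposition 6.1]{JaikinBaseChange}, \cref{prop: epic_tensor_prod} replaces every tensor product over a $\ast$-regular closure by one over the corresponding group algebra; the Ore step and the flatness of $\UO(G)$ over $\VN(G)$ \cite[Corollary 10.13]{GoodWar04}, used verbatim as in \cref{prop: vanishing_Tor1_amalg}, then reduce the problem to injectivity of $\ell^{2}(G)\otimes_{\scalebox{0.8}{$K[C]$}}\ell^{2}(G)\to(\ell^{2}(G)\otimes_{\scalebox{0.8}{$K[A]$}}\ell^{2}(G))\oplus(\ell^{2}(G)\otimes_{\scalebox{0.8}{$K[B]$}}\ell^{2}(G))$, equivalently to the generalization of \cref{lem: amalg_prod_intersection_augment_ideal}:
\[
F(\ell^{2}(G))\cdot I_{\scalebox{0.8}{$K[A]$}}\ \cap\ F(\ell^{2}(G))\cdot I_{\scalebox{0.8}{$K[B]$}}\ =\ F(\ell^{2}(G))\cdot I_{\scalebox{0.8}{$K[C]$}},
\]
for an \emph{arbitrary} group $C$ with $G=A\ast_{C}B$. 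The whole conjecture is therefore equivalent to this single statement about finite-rank operators, and this is where I expect all the difficulty to lie.

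For that statement I would keep the operator $D_{\widetilde{a}}$ of \cref{lem: proj_operator_amal_product} essentially unchanged: for a \emph{single} nontrivial coset representative $\widetilde{a}$ of $C$ in $A$, the formula ``$g\mapsto\widetilde{a}^{-1}g$ if the normal form of $g$ ends in $\widetilde{a}$, and $0$ otherwise'' still defines a bounded operator on $\ell^{2}(G)$ — it is a partial bijection of the standard basis, so finiteness of $C$ is irrelevant here — and both identities of \cref{lem: proj_operator_amal_product} survive unchanged. Re-running the trace computation of \cref{lem: amalg_prod_intersection_augment_ideal} with this $D_{\widetilde{a}}$ yields, for every $T$ in $F(\ell^{2}(G))\cdot I_{\scalebox{0.8}{$K[A]$}}\cap F(\ell^{2}(G))\cdot I_{\scalebox{0.8}{$K[B]$}}$ and all $x,y\in G$, the absolutely convergent scalar identity
\[
\sum_{c\in C}\langle (T\cdot c)(x),\,y\rangle=0;
\]
a symmetric computation using the factor $B$ gives the mirror identities. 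The one step that genuinely breaks is the very last one of \cref{lem: amalg_prod_intersection_augment_ideal}: for finite $C$ one forms the bounded operator $\sum_{c\in C}T\cdot c$, concludes from the above that it is zero, and then \emph{divides by $|C|$} to exhibit $T=\tfrac{1}{|C|}\sum_{c\neq 1}T\cdot(1-c)\in F(\ell^{2}(G))\cdot I_{\scalebox{0.8}{$K[C]$}}$. For infinite $C$ the operator $\sum_{c\in C}T\cdot c$ need not converge — it smears the finite-rank operator $T$ over infinitely many translates — and $F(\ell^{2}(G))$ is not closed under the weak limits one would use to average, so one must instead extract an \emph{honest finite} decomposition $T=\sum_{i}S_{i}\cdot(1-c_{i})$ (with $S_{i}\in F(\ell^{2}(G))$, $c_{i}\in C$) directly from the coefficient decay of $T$ together with the family of scalar identities above.

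The standing hypothesis that $\mathcal{R}_{\scalebox{0.8}{$K[C]$}}$ is a universal localization of $K[C]$ should be what resolves this last step, and the natural line of attack is to algebraise the argument rather than push operator estimates. Since $K[C]\to\mathcal{R}_{\scalebox{0.8}{$K[C]$}}$ is a universal localization, \cite[Theorem 4.7 \& Theorem 4.8]{Schofield85} give $\Tor_{1}^{K[C]}(\mathcal{R}_{\scalebox{0.8}{$K[C]$}},\mathcal{R}_{\scalebox{0.8}{$K[C]$}})=0$; the aim would be to combine this with a change-of-rings argument along $K[C]\to\mathcal{R}_{\scalebox{0.8}{$K[C]$}}$ so as to recompute the obstruction to injectivity of the diagonal map over $\mathcal{R}_{\scalebox{0.8}{$K[C]$}}$, replacing the coefficient-decay combinatorics by the normal-form structure of $G$ relative to a ``straightened'' subring of $\UO(G)$ built from $\mathcal{R}_{\scalebox{0.8}{$K[C]$}}$, in the spirit of the passages from $\UO(G)$ to $\ast$-regular closures performed in \cref{lem: localization_by_cyclic} and \cref{prop: vanishing_Tor1_HNN}. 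Alternatively one could look for a purely combinatorial substitute for the averaging — a telescoping over the infinite group $C$ compatible with the finite-rank condition. In any case the decisive obstacle, which is precisely the analytic subtlety flagged after the statement, is this missing averaging: when $C$ is infinite there is nothing to divide by and no a priori reason for the invariance-type identities alone to force an $I_{\scalebox{0.8}{$K[C]$}}$-decomposition, and overcoming this is exactly what keeps the statement conjectural. The HNN analogue should then follow, as in \cref{sec: HNN_ext}, by presenting the HNN group as a cyclic extension of a tree of amalgams over $C$.
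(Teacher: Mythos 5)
This statement is labeled a \emph{conjecture} in the paper (\cref{conj: vanishing_Tor_1}): the paper offers no proof, only a short discussion in \cref{sec: comments_questions} of two possible routes — generalizing \cref{lem: amalg_prod_intersection_augment_ideal} to infinite $C$ (with the caveat that ``the several topologies of bounded operators play a different role''), or an algebraic argument that exploits the universal localization hypothesis on $\mathcal{R}_{\scalebox{0.8}{$K[C]$}}$. There is therefore nothing in the paper to compare your argument against; the right question is simply whether your analysis is accurate.

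It is. Your reduction — Dicks' Mayer--Vietoris over the coproduct ring $R_G$, vanishing of the two outer $\Tor_1$ terms by von Neumann regularity, passage from $\ast$-regular closures to group algebras via epicity and \cref{prop: epic_tensor_prod}, the Ore/flatness step from $\UO(G)$ to $\ell^2(G)$, and the translation to the finite-rank-operator statement $F(\ell^2(G))\cdot I_{\scalebox{0.8}{$K[A]$}}\cap F(\ell^2(G))\cdot I_{\scalebox{0.8}{$K[B]$}}=F(\ell^2(G))\cdot I_{\scalebox{0.8}{$K[C]$}}$ — runs exactly as in the proof of \cref{prop: vanishing_Tor1_amalg} and indeed never used $|C|<\infty$ until the very last step. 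You are also right that $D_{\widetilde{a}}$ and \cref{lem: proj_operator_amal_product} survive unchanged (the operator is a partial isometry permuting a subset of the standard basis), and that the trace identity $\sum_{c\in C}\langle (T\cdot c)L_{g_1}(1),\,L_{g_2^{-1}}(\widetilde a)\rangle=0$ persists by absolute convergence of the trace. The pinpointed obstruction — that the paper's final manipulation ``$T=\frac{1}{|C|}\sum_{c\neq1}T\cdot(1-c)$'' requires forming the operator $\sum_{c\in C}T\cdot c$ and then dividing by $|C|$, neither of which is available when $C$ is infinite, and that what one would actually need is a \emph{finite} decomposition $T=\sum_i S_i\cdot(1-c_i)$ in $F(\ell^2(G))\cdot I_{\scalebox{0.8}{$K[C]$}}$ — is precisely the analytic issue the paper alludes to.

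However, you have not proved the conjecture; you explicitly say as much in your closing sentences, and your sketch of how the universal localization hypothesis might close the gap (change of rings along $K[C]\to\mathcal{R}_{\scalebox{0.8}{$K[C]$}}$, $\Tor_1^{K[C]}(\mathcal{R}_{\scalebox{0.8}{$K[C]$}},\mathcal{R}_{\scalebox{0.8}{$K[C]$}})=0$ by Schofield) is a heuristic direction rather than an argument. As a piece of exposition this is a faithful and correct account of why \cref{conj: vanishing_Tor_1} is open and where the difficulty lies; as a proof proposal it leaves the essential step — replacing the averaging over $C$ by some finite decomposition mechanism — unaddressed, which is exactly what keeps the statement a conjecture in the paper.
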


    The Atiyah conjectures and the universal localization construction are closely related as we have shown in this article. Recall that for a group $G$ in Linnell's class $\mathcal{C}$ with finite $\lcm(G)$, $\mathcal{R}_{\scalebox{0.8}{$K[G]$}}$ is a universal localization of $K[G]$. This is also true for some locally indicable groups such as torsion-free one-relator groups or the fundamental group of a graph of groups of free groups and infinite cyclic edge groups. We want to go further and state the next conjecture.

    \begin{conj} \label{conj: class_T}
        Every group $G$ in the class $\mathcal{T}$ with finite $\lcm(G)$ satisfies the strong, algebraic and center-valued Atiyah conjecture over any subfield $K \subseteq \C$ closed under complex conjugation and $\mathcal{R}_{\scalebox{0.8}{$K[G]$}}$ is the universal localization of $K[G]$ over $\Sigma(K,G)$.
    \end{conj}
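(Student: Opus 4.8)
The plan is to run the inductive scheme of \cref{thm: class_TVLI} essentially verbatim, but to remove the use of virtual-\{local indicability\}---which there served only to realise the finite-index locally indicable subgroup's $\ast$-regular closure as a division ring, hence $\mathcal{R}_{K[G]}$ as a crossed product---and to attack the graph-of-groups step head on. First I would stratify $\mathcal{T}$ by ordinal-indexed subclasses exactly as in \cref{lem: Ya_class_TVLI} and argue by transfinite induction. The limit-ordinal and directed-union cases are routine: the statements to be inherited (the strong, algebraic and center-valued Atiyah conjectures, plus the identity $\mathcal{R}_{K[G]}=K[G]_{\Sigma(K,G)}$) only involve finitely many matrix entries or finitely generated modules at a time, so they survive direct limits. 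This reduces the whole conjecture to the following step: given a finite graph of groups $\mathscr{G}_\Gamma=(G_v,G_e)$ with fundamental group $G$ such that $\lcm(G)<\infty$ and such that every $\mathcal{R}_{K[G_v]}$ and $\mathcal{R}_{K[G_e]}$ is semisimple and equals the universal localization of $K[G_v]$, resp. $K[G_e]$, over $\Sigma(K,G_v)$, resp. $\Sigma(K,G_e)$, prove the three conclusions for $G$. Decomposing $\Gamma$ one edge at a time (as in the proof of \cref{thm: GG_univ_local_&_K_0}), it suffices to treat a single amalgamated free product $G=A\ast_C B$ and a single HNN extension $G=A\ast_C$, now with $C$ an \emph{arbitrary} group in the class rather than a finite one.

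For $G=A\ast_C B$ set $R_G:=\mathcal{R}_{K[A]}\ast_{\mathcal{R}_{K[C]}}\mathcal{R}_{K[B]}=\mathscr{RG}_\Gamma$. The edge ring $\mathcal{R}_{K[C]}$ is semisimple, so \cref{thm: induced_submod} and \cref{prop: proj_mod_GR} apply: $R_G$ is hereditary, submodules of induced modules are induced, and the pulled-back $\SMRF$ $\phi^{\#}(\dim_{\UO(G)})$ is faithful on finitely generated projectives (as in \cref{prop: R_G_hereditary_faithful}). Since by inductive hypothesis $\mathcal{R}_{K[A]}$, $\mathcal{R}_{K[B]}$ and $\mathcal{R}_{K[C]}$ are universal localizations of $K[A]$, $K[B]$ and $K[C]$, and (using \cref{thm: induced_submod}) the inverted maps become maps between stably induced finitely generated projective $R_G$-modules, a transitivity argument as in the proof of \cref{prop: vanishing_Tor1_HNN}, via \cref{prop: local_transitive}, identifies $R_G=\mathscr{RG}_\Gamma$ with a universal localization of $K[G]$. \emph{Granting that} $\Tor_1^{\mathscr{RG}_\Gamma}(\UO(G),\UO(G))=0$, everything from Sections \ref{sec: amalg_free_prod}--\ref{sec: class_T} then goes through: the argument of \cref{thm: SAC_amal_prod} gives the strong Atiyah conjecture for $G$, hence $\mathcal{R}_{K[G]}$ is semisimple by \cref{prop: Atiyah_implies_semisimple}; \cref{thm: recogn_universal_localization} (applied to the hereditary ring $\mathscr{RG}_\Gamma$, the epic map $\mathscr{RG}_\Gamma\to\mathcal{R}_{K[G]}$, and the faithful induced $\SMRF$) shows $\mathcal{R}_{K[G]}$ is the universal localization of $\mathscr{RG}_\Gamma$ at all full maps; composing the two localizations and using the criterion that over a von Neumann regular ring a matrix over $K[G]$ is invertible iff it is invertible in $\UO(G)$ (together with \cref{lem: invertiv_is_full} and \cref{lem: full_vNr}) pins $\mathcal{R}_{K[G]}$ down to $K[G]_{\Sigma(K,G)}$; and the algebraic Atiyah conjecture passes to $G$ as in \cref{thm: AAC_amal_prod}, via \cref{prop: project_mod_localization} and the surjectivity of $\oplus_v K_0(\mathcal{R}_{K[G_v]})\to K_0(\mathscr{RG}_\Gamma)$, the center-valued version being then automatic from Henneke's equivalence \cite{FabianHennekeThesis}. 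The HNN case is handled verbatim by the device of \cref{sec: HNN_ext}, writing $G$ as a cyclic extension of the fundamental group of a tree of amalgamated products over copies of $C$ and using \cref{thm: SAC_HNN_localization_&_K0}.

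The single genuinely new input, and the main obstacle, is the vanishing of $\Tor_1^{\mathscr{RG}_\Gamma}(\UO(G),\UO(G))$ for arbitrary edge group $C$---precisely \cref{conj: vanishing_Tor_1} for the pieces in play. The proof of \cref{prop: vanishing_Tor1_amalg} reduces this to the identity $F(\ell^2(G))\cdot I_{\C[A]}\cap F(\ell^2(G))\cdot I_{\C[B]}=F(\ell^2(G))\cdot I_{\C[C]}$ of \cref{lem: amalg_prod_intersection_augment_ideal}, and that lemma uses the finiteness of $C$ in an essential way (building the operator $D_{\widetilde a}$, producing a sum indexed by $C$, and dividing by $|C|$). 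To go further one would either analyse carefully how the various operator topologies on $F(\ell^2(G))$ interact with the $\mathcal{R}_{K[C]}$-action when $C$ is infinite, or---more promisingly---give a purely algebraic proof exploiting the hypothesis that $K[C]\hookrightarrow\mathcal{R}_{K[C]}$ is a universal localization (so that $\mathcal{R}_{K[C]}\otimes_{K[C]}\mathcal{R}_{K[C]}\cong\mathcal{R}_{K[C]}$ and the higher Tor over $K[C]$ is well behaved) together with the Mayer--Vietoris sequence \cite[Theorem 6]{Dicks_MayerVietoris} for the coproduct $R_G$. Once \cref{conj: vanishing_Tor_1} is available in this generality, the scheme above delivers \cref{conj: class_T}.
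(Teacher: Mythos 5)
The statement you were asked to address is, in the paper, \cref{conj: class_T} --- an open \emph{conjecture}, not a theorem; the paper offers no proof, only the remark in \cref{sec: comments_questions} that a positive answer to \cref{conj: vanishing_Tor_1} would yield it. Your proposal is, correctly, a conditional argument: it reduces \cref{conj: class_T} to \cref{conj: vanishing_Tor_1} and then runs the machinery of Sections \ref{sec: amalg_free_prod}--\ref{sec: class_T}, which is precisely the route the paper itself sketches. The transfinite stratification, the passage to directed unions, the edge-by-edge decomposition, the use of \cref{thm: induced_submod}/\cref{prop: proj_mod_GR} to make $\mathscr{RG}_\Gamma$ hereditary with faithful pulled-back $\SMRF$, the appeal to \cref{thm: recogn_universal_localization} once $\Tor_1$ vanishes, and the $K_0$-surjectivity argument for the algebraic/center-valued versions all match what the paper would do. You are also right that for the class $\mathcal{T}_{\scalebox{0.7}{$\mathcal{VLI}$}}$ the paper sidesteps the Tor-vanishing question by using the finite-index locally indicable subgroup $H$ to identify $\mathcal{R}_{\scalebox{0.8}{$K[H]$}}$ as a division ring and $\mathcal{R}_{\scalebox{0.8}{$K[G]$}}$ as a crossed product (\cref{prop: virt_localization_and_GG}, via \cite{JaikinLopez_Atiyah} and \cite{FisherPeralta_Kaplansky'sZD3mfld}); for the full class $\mathcal{T}$ there is no such handle, which is exactly why the Tor-vanishing must be attacked directly.

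So the genuine gap is the one you name, and it is real: you have not proved \cref{conj: class_T}, you have re-derived the paper's own reduction of it to \cref{conj: vanishing_Tor_1}. The obstruction sits squarely in \cref{lem: amalg_prod_intersection_augment_ideal}, whose proof uses the finiteness of $C$ in two essential places --- forming the averaged operator $\sum_{c\in C}T\cdot c$ (which need not even make sense when $C$ is infinite) and dividing by $|C|$ to exhibit $T$ as an element of $F(\ell^2(G))\cdot I_{\scalebox{0.8}{$\C[C]$}}$. Your suggested algebraic alternative --- exploiting that $K[C]\hookrightarrow\mathcal{R}_{\scalebox{0.8}{$K[C]$}}$ is a universal localization together with the Mayer--Vietoris sequence of \cite{Dicks_MayerVietoris} --- is a sensible direction but is not carried out, and it is precisely what is left open in \cref{conj: vanishing_Tor_1}. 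Until that is settled, your argument, like the paper's discussion, remains a conditional scheme rather than a proof.
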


    A positive answer to \cref{conj: vanishing_Tor_1} would led to a generalization of \cref{thm: class_TVLI} in which the groups can be taken in the whole class $\mathcal{T}$, and in particular, it would confirm \cref{conj: class_T}.

\bibliographystyle{alpha}
\bibliography{bib}

\end{document}